\RequirePackage{fix-cm}
\RequirePackage{amsmath}
\documentclass[final,1p,times]{elsarticle}
\usepackage[utf8]{inputenc}

\usepackage{url} 
\usepackage{multirow} 

\usepackage{xfrac}
\usepackage[shortlabels]{enumitem}
\setlist{nosep}
\newlist{algolist}{enumerate}{3}
\setlist[algolist,1]{label=(\arabic*), ref=(\arabic*)}
\setlist[algolist,2]{label=(\alph*), ref=(\arabic{algolisti})-(\alph*)}
\setlist[algolist,3]
{label=(\roman*), ref=(\arabic{algolisti})-(\alph{algolistii})-(\roman*)}
\usepackage{color}
\usepackage{datetime}
\usepackage{soul}
\usepackage{verbatim}


\newcommand{\BP}{Barrett and Prigozhin}
\newcommand{\RU}{R\"uschendorf and Uckelmann}
\newcommand{\Was}{Wasserstein}

\usepackage{graphicx}
\usepackage[percent]{overpic}
\usepackage[caption=false]{subfig}
\usepackage{float}
\newfloat{algorithm}{t}{lop}
\usepackage{textpos} 
\setlength{\TPHorizModule}{1in} 
\setlength{\TPVertModule}{1in} 

\setcounter{topnumber}{2}
\setcounter{bottomnumber}{2}
\setcounter{totalnumber}{4}     
\setcounter{dbltopnumber}{2}    
\setlength{\textfloatsep}{10pt plus 1.0pt minus 2.0pt}

\usepackage{amssymb}              
\usepackage{amsfonts}
\usepackage{bm}                   
\usepackage{bbm}                  
\usepackage{array}                
\usepackage{mathtools}            
\numberwithin{equation}{section}  
\allowdisplaybreaks
\makeatletter
\makeatother

\newtheorem{theorem}{Theorem}[section]
\newtheorem{lemma}[theorem]{Lemma}
\newtheorem{corollary}[theorem]{Corollary}
\newtheorem{definition}[theorem]{Definition}
\newtheorem{example}[theorem]{Example}
\newtheorem{rem}{Remark}
\newproof{proof}{Proof}

\DeclarePairedDelimiter\abs{\lvert}{\rvert} 
\DeclareMathOperator*{\arginf}{arg\,inf} 
\DeclareMathOperator*{\argsup}{arg\,sup} 
\providecommand{\CC}{{{C\nolinebreak[4]\hspace{-.05em}\raisebox{.4ex}
{\tiny\bf ++}}}}
\providecommand{\cl}[1]{\bar{#1}}
\providecommand{\clBrp}{\cl{B}^r_{\scriptscriptstyle{+}}}
\providecommand{\Conv}{\mathrm{Conv}}
\providecommand{\edge}{\mathrm{edg}} 
\providecommand{\inter}{\mathrm{int}} 
\providecommand{\N}{\mathbb{N}} 
\DeclarePairedDelimiter\norm{\lVert}{\rVert} 
\providecommand{\R}{\mathbb{R}} 
\providecommand{\Z}{\mathbb{Z}} 
\DeclarePairedDelimiter\set{\{}{\}}
\providecommand{\vc}[1]{\mathbf{#1}}

\providecommand{\aref}[1]{Step~\ref{#1}}        
\providecommand{\cref}[1]{Condition~\ref{#1}}   
\providecommand{\dref}[1]{Definition~\ref{#1}}  
\providecommand{\eref}[1]{Equation~\eqref{#1}}  
\providecommand{\fref}[1]{Figure~\ref{#1}}      
\providecommand{\lref}[1]{Lemma~\ref{#1}}       
\providecommand{\rref}[1]{Remark~\ref{#1}}      
\providecommand{\sref}[1]{Section~\ref{#1}}     
\providecommand{\tref}[1]{Theorem~\ref{#1}}     
\providecommand{\xref}[1]{Example~\ref{#1}}     
\providecommand{\fsubref}[2]{Figure~\ref{#1}\protect\subref{#2}}   
\begin{document}
\begin{frontmatter}
\title{The boundary method for semi-discrete optimal transport partitions and
Wasserstein distance computation\tnoteref{NSF}}
\tnotetext[NSF]{This material is based upon work
supported by the National Science Foundation Graduate Research Fellowship 
Program under Grant No. DGE-1650044. Any opinions, findings, and conclusions or 
recommendations expressed in this material are those of the authors and do not 
necessarily reflect the views of the National Science Foundation.}
\author{Luca Dieci}
\address{School of Mathematics,
              Georgia Institute of Technology, 
              Atlanta, GA 30332 U.S.A.\\
              Tel.: +1 404-894-9209
              Fax: +1 404-894-4409}
\ead{dieci@math.gatech.edu}
\author{J.D.\ Walsh III}
\address{Naval Surface Warfare Center,
              Panama City Division (X24),
              110 Vernon Ave.,
              Panama City, FL 32407 U.S.A.\\
              Tel.: +1 850-234-4660
              Fax: +1 850-235-5374}
\ead{joseph.d.walsh@navy.mil}

\begin{abstract}
We introduce a new technique, which we call the {\emph{boundary method}},
for solving semi-discrete optimal transport problems with a wide range of cost
functions.
The boundary method reduces the effective dimension of the problem, thus improving
complexity.
For cost functions equal to a $p$-norm with $p \in (1,\infty)$,
we provide mathematical justification, convergence analysis, and algorithmic
development. Our testing supports the boundary method with these $p$-norms,
as well as other, more general cost functions.
\end{abstract}
\begin{keyword}
Optimal transport \sep Monge-Kantorovich \sep semi-discrete \sep
Wasserstein distance \sep boundary method
\MSC{65K10 \sep 35J96 \sep 49M25}
\end{keyword}
\end{frontmatter}

\section{Introduction}
In this work, we consider a new solution method for optimal transport problems.
Numerical optimal transport has applications in a wide range of fields, but 
the scaling properties and ground cost restrictions of current numerical
methods make it difficult to find solutions for many applications.

The boundary method we propose focuses on a broad class of
optimal transportation problems: {\emph{semi-discrete optimal transport}}.
Many other techniques assume semi-discrete transport, either implicitly
or explicitly, as
semi-discrete formulations can be used to approximate solutions to fully 
continuous problems, and the semi-discrete optimal transport problem is of 
practical relevance itself.

Key challenges in numerical optimal transport are: 
(a) the design of numerical methods capable of handling general ground costs,
(b) efficient computation of the \Was{} metric, and
(c) solutions of three (or higher) dimensional problems.
The boundary method addresses these
concerns by solving problems where the ground cost is a $p$-norm,
$p \in (1,\,\infty)$, and by doing so in a way that reduces the effective
dimension of the transport problem.

\subsection{Description of optimal transport: the
Monge-Kantorovich problem}\label{s:transport_problem}
The theory of optimal transport dates back to the work by Monge in 1781, 
~\cite{Monge1781a}.
In the 1940s, Kantorovich's papers~\cite{Kantorovich1942a,Kantorovich1948a} 
relaxed Monge's requirement that no mass be split, creating we now know 
as the Monge-Kantorovich problem.

\begin{definition}[Monge-Kantorovich problem]\label{MKproblem}
Let $X,\,Y \subseteq \R^d$, let $\mu$ and $\nu$ be probability densities 
defined on $X$ and $Y$, and let $c(\vc{x},\,\vc{y}) : X \times Y \to \R$ be a 
measurable 
\emph{ground cost} function.
Define the set of \emph{transport plans}
\begin{equation}\label{MK-1}
\Pi(\mu,\,\nu) := \set*{\pi \in \mathcal{P}(X \times Y) \left|
\begin{array}{c}
\pi[A \times Y] = \mu[A],\,
 \pi[X \times B] = \nu[B] \ ,\\
 \forall \text{ meas.\ }
 A \subseteq X,\, B \subseteq Y
\end{array}
\right. },
\end{equation}
where $\mathcal{P}(X \times Y)$ is the set of probability measures on the 
product space,
and define the \emph{primal cost} function $P: \Pi(\mu,\,\nu) \to \R$ as
\begin{equation}\label{primal_cost}
P(\pi) := \int_{X \times Y} c(\vc{x},\,\vc{y})\, d\pi(\vc{x},\,\vc{y}).
\end{equation}
The Monge-Kantorovich problem is to
find the \emph{optimal primal cost}
\begin{equation}\label{MK-2}
P^* := \inf_{\pi \in \Pi(\mu,\,\nu)}\, P(\pi),
\end{equation}
and an associated \emph{optimal transport plan}
\begin{equation}\label{MK-pi-star}
\pi^* := \arginf_{\pi \in \Pi(\mu,\,\nu)}\, P(\pi).
\end{equation}
\end{definition}

\noindent
Kantorovich also identified the problem's dual formulation.
\begin{definition}[Dual formulation]\label{KantoDual}
Define the set of functions
\begin{equation}
\Phi_c(\mu,\,\nu) := \set*{(\varphi,\,\psi) \in L^1(d\mu) \times L^1(d\nu)
\left|
\begin{array}{c}
\varphi(\vc{x}) + \psi(\vc{y}) \leq c(\vc{x},\,\vc{y}) \ ,\\
d\mu \text{ a.e.\ }
\vc{x} \in X,\,d\nu\text{ a.e.\ } \vc{y} \in Y
\end{array}
\right. }.
\end{equation}
Let the \emph{dual cost} function,
$D: \Phi_c(\mu,\,\nu) \to \R$, be defined as
\begin{equation}\label{e:dualCost}
D(\varphi,\,\psi) := \int_X \varphi \,d\mu + \int_Y \psi \,d\nu.
\end{equation}
Then, the \emph{optimal dual cost} is
\begin{equation}
D^* := \sup_{(\varphi,\,\psi) \in \Phi_c(\mu,\,\nu)}\, D(\varphi,\,\psi),
\end{equation}
and an optimal dual pair is given by
\begin{equation}
(\varphi^{*},\,\psi^{*}) := \argsup_{(\varphi,\,\psi) \in \Phi_c(\mu,\,\nu)}\, 
D(\varphi,\,\psi).
\end{equation}
\end{definition}

When the ground cost is a distance function (often but not
necessarily Euclidean), Monge-Kantorovich solutions are related to the
\emph{\Was{} metric}, a distance between probability distributions:
\begin{equation}\label{WassMetric}
W_1(\mu,\,\nu) := \inf_{\pi \in \Pi(\mu,\,\nu)}\,
\int_{X \times Y} c(\vc{x},\,\vc{y}) \,
d\pi(\vc{x},\,\vc{y}).
\end{equation}
We have $W_1(\mu,\,\nu) = P^* = D^*$, and hence, we may refer to any of these 
as the \emph{Wasserstein distance}, the \emph{optimal transport cost}, or simply 
the \emph{optimal cost}.\footnote{See also~\cite[p.\ 207]{Villani2003a},
a definition of the \Was{} metric $W_p$ with $p \in [0,\,\infty)$.}

\begin{rem}
$W_1(\mu,\,\nu)$ is often written as $W_1$, with $\mu$ and $\nu$ implied.
Furthermore, as \eref{WassMetric} makes clear, $W_1(\mu,\,\nu)$ also depends 
on the ground cost function $c(\vc{x},\,\vc{y})$.
In the literature, the \Was{} distance formula often assumes
the ground cost to be a specific predetermined function, usually the
Euclidean distance $\norm{\vc{x}-\vc{y}}_2$.
\end{rem}

\begin{definition}[Monge problem]\label{Mproblem}
In certain cases, there exists at least one solution to the semi-discrete 
Monge-Kantorovich problem that does not split transported masses.
In other words, there exists some $\pi^*$ such that
\begin{equation}\label{Tstar}
\pi^*(\vc{x},\,\vc{y}) = \pi^*_{\scriptscriptstyle{T^*}}(\vc{x},\,\vc{y}) :=
\mu(\vc{x})\,\delta[\vc{y} = T^*(\vc{x})],
\end{equation}
where $T^* : X \to Y$ is a measurable map called \emph{optimal
transport map}.\footnote{One can also write $\pi^*_{T^*}$ as $(\mathrm{Id}
\times T^*) \# \mu$. Our notation is from~\cite[p.\ 3]{Villani2003a}. The alternative notation is
used in \cite{Gangbo1996a}.}
When such a $\pi^*$ exists, we say the solution also solves the Monge 
problem.

If the Monge-Kantorovich problem has a solution which solves the Monge
problem, we can assume without loss of 
generality that every $\pi \in \Pi(\mu,\,\nu)$ satisfies
\begin{equation}\label{Tplan}
\pi(\vc{x},\,\vc{y}) = \pi_{\scriptscriptstyle{T}}(\vc{x},\,\vc{y}) :=
\mu(\vc{x})\,\delta[\vc{y} = T(\vc{x})],
\end{equation}
for some measurable transport map $T : X \to Y$, and that the primal cost can 
be written
\begin{equation}\label{Mprimal_cost}
P(\pi) := \int_{X} c(\vc{x},\,T(\vc{x}))\, d\mu(\vc{x}).
\end{equation}
\end{definition}

\subsection{Semi-discrete problem}\label{s:semi-discrete}
The semi-discrete optimal transport problem we consider is the Monge-Kantorovich
problem of \dref{MKproblem}, with restrictions on $\mu$ and $\nu$, and $c$.
\begin{center}
\fbox{
  \parbox[c]{4.5in}{
\begin{enumerate}[label=(\arabic*)]
\item
Assume that $\mu$ satisfies the following:
\begin{enumerate}[label=(\alph*),ref=\theenumi(\alph*)]
\item\label{c:absCont}
$\mu$ is absolutely continuous with respect to the Lebesgue measure.
\item\label{c:convComp}
The support of $\mu$ is contained in the convex compact region $A \subseteq X$.
\\
(Since $A \subset \R^d$, it must also be the case that $A$ is simply connected.)
\end{enumerate}
\item\label{c:nuFinite}
Assume $\nu$ has exactly $n \geq 2$ non-zero values, located at 
$\set{\vc{y}_i}_{i=1}^n 
\subseteq Y$.
\item\label{c:pNorm}
Assume $c$ is a $p$-norm with $p \in (1,\,\infty)$.
\end{enumerate}
  }
}
\end{center}
As we will show, each of these conditions is required for one or more of the
theorems given in \sref{s:math}.
\cref{c:absCont} ensures that the value of $\mu$ is bounded, which is required to show Wasserstein distance convergence in \tref{t:WassErr}.
Conditions \ref{c:absCont}, \ref{c:convComp}, \ref{c:nuFinite}, and
\ref{c:pNorm} are all used to satisfy the conditions of
Corollary~4\footnote{See \tref{t:CuestaAlbertos}, below, for a full statement of this result.}
of \cite{Cuesta1993a}, which we apply to show the $\mu$-a.e.\ uniqueness of the solution in \tref{t:unique_solution}.

\subsubsection{Semi-discrete transport 
and the Monge problem}\label{s:semidcosts}
Since $\mu$ is absolutely continuous, $\abs{S}=0$ implies $\mu(S)=0$ for all Borel sets
$S$ in $X$. Hence, $\mu$ is nonatomic.
Because $c$ is continuous and $\mu$ is nonatomic, at least 
one solution to the semi-discrete Monge-Kantorovich problem also satisfies 
the Monge problem, described in \dref{Mproblem}; see Theorem B in~\cite{Pratelli2007a}.
Thus, by applying \eref{Tplan}, we can assume without loss of generality that 
any transport plan $\pi$ partitions $A$ into $n$ sets $A_i$, 
where $A_i$ is the set of points in $A$ that are transported by the map $T$ to 
$\vc{y}_i$.
Using this partitioning scheme in combination with \eref{Mprimal_cost} allows 
us to rewrite the primal cost function 
for the semi-discrete problem as
\begin{equation}\label{primal_cost_sd}
P(\pi) := \sum_{i=1}^n \int_{A_i} c(\vc{x},\,\vc{y}_i)\, 
d\mu(\vc{x}).
\end{equation}

\subsection{Shift characterization for semi-discrete 
optimal transport}\label{s:oc_form}
Using this idea of sets $A_i$, we are ready to describe the shift 
characterization of the semi-discrete optimal transport problem.
The definition of the characterization, which follows, is based on one given by 
R\"{u}schendorf and Uckelmann in~\cite{Ruschendorf2007a,Ruschendorf2000a}.

\begin{definition}[Shift characterization]\label{ShiftChar}
Let $\set{a_i}_{i=1}^n$ be a set of $n$ finite values, referred 
to as \emph{shifts}.
Define
\begin{equation}\label{Fdef}
F(\vc{x}) := \max_{1\leq i\leq n} \set{a_i - c(\vc{x},\,\vc{y}_i)}.
\end{equation}
For $i \in \N_n$, where $\N_n = \set{1,\,\ldots,\,n}$, let
\begin{equation}\label{Aidef}
A_i  := \set{\vc{x} \in A \mid F(\vc{x}) = a_i - c(\vc{x},\,\vc{y}_i)}.
\end{equation}
Note that $\cup_{i=1}^nA_i=A$.
The problem of determining an optimal transport plan $\pi^*$ is equivalent to 
determining shifts $\set{a_i}_{i=1}^n$ such that for all $i \in 
\N_n$, the total mass transported from $A_i$ to $\vc{y}_i$ equals 
$\nu(\vc{y}_i)$.
\end{definition}

The shift characterization is derived from the dual cost function given in
\eref{e:dualCost}.
For any $D(\varphi,\,\psi)$,
suppose we define
\begin{equation}
\varphi'(\vc{x}) = \sup_{\vc{y} \in Y} \set{\psi(\vc{y}) - c(\vc{x},\vc{y})}. 
\end{equation}
Then $D(\varphi',\,\psi) \geq D(\varphi,\,\psi)$ for all $\psi$.

For the semidiscrete problem, $\varphi'$ is exactly \eref{Fdef}, and the shifts
$a_i$ correspond to the value of $\psi$ at each Dirac mass $\vc{y}_i$.
Hence, the discrete problem is no more than a special case of the general
continuous problem where $\mu$  is a continuous density function and
$\nu$ an empirical measure. For a detailed derivation, see~\cite{Gangbo1996a}.

In the same way, the sets $A_i$ correspond to the subdifferentials
$\partial_c(\vc{y}_i)$. For a general cost function $c$, the sets $A_i$ are referred to
in analysis as \emph{Laguerre cells}, and the map generated by the sets $A_i$ over $A$
is called a \emph{Laguerre diagram}.
As we will discuss further on, the boundaries between Laguerre cells are typically sections of hypersurfaces.
When $c(\vc{x},\,\vc{y}) = \norm{\vc{y}-\vc{x}}_2^2$, the boundaries are sections of
hyperplanes, and the map as called a \emph{power diagram}. See \cite{Aurenhammer1987a}
for a detailed evaluation of this special case.
There are also cost functions where, for certain arrangements of $\set{\vc{y}_i}$, the boundaries
between Laguerre cells have positive Lebesgue measure in $\R^d$.
An example is shown in \fsubref{f:exact}{f:badManhattan}.

\subsection{Numerical approaches to the MK 
problem}\label{s:numerical_mk}
Applications of optimal transport are found in many areas of research, 
including medicine, economics, image processing, machine learning, physics, and
many others; e.g.,
see~\cite{Muskulus2010b,Carlier2001a,Haker2001a,Cuturi2013a,Carlen2004a}.
For that reason, many people have focused their research on 
numerical methods for the Monge-Kantorovich problem.

The solution to a semi-discrete problem can be approximated by treating the
problem as fully discrete, and the solution to a fully continuous problem can be
approximated by treating it as either semi- or fully discrete.
By ``treating,'' we refer primarily to assumptions about continuity:
in practice, nearly every approach fully discretizes the problem, and the 
complexity of such approaches is relative to the measure of the discretization.

The semi-discrete problem has received significant attention in its role as
a discretization of the continuous problem (where continuity assumptions are
employed over $X$ but not $Y$). Substantial effort has been taken to quantify
the extent to which solutions to such semi-discrete problems approximate
the solution to the original continuous problem; for example,
see~\cite{Merigot2011a}.
However, the semi-discrete problem has interesting applications in its own right. 
Recent developments include works in economics~\cite{Chiappori2010a,Chiappori2016a,Dupuis2016a},
image processing~\cite{deGoes2012a}, and
optics~\cite{Abedin2016a,Glimm2003a}.
In addition, the power and flexibility of Laguerre cell tesselation (vs.\ Voronoi) drive
ongoing research in physics and other fields.

When the ground cost for the semi-discrete problem is the squared $2$-norm,
$\norm{\cdot}_2^2$,
significant numerical progress has been achieved.
In 1988, Oliker and Prussner introduced what came to be called the Oliker-Prussner
algorithm for nonlinear Monge-Amp\`{e}re-type equations in $\R^2$; see~\cite{Oliker1988a}.
Oliker and Prussner were significantly ahead of their time.
A 1992 paper by Aurenhammer et al.,~\cite{Aurenhammer1992a}, while describing
a different algorithm (Newton's method), explicitly connected the
Oliker and Prussner's approach to
semi-discrete transport and its resulting ``Voronoi-type diagrams.''
In 1998 Aurenhammer et al.\ published~\cite{Aurenhammer1998a}, a revision that
clarified important details, and incorporated an argument from~\cite{Cuesta1993a}
to guarantee that the sets $A_i$ partition $A$ $\mu$-a.e.
More recent algorithms appear in~\cite{Kitagawa2017a,Merigot2011a}.

When sets $A_i$ and $A_j$ share a boundary, for some $i \neq j$, there is a
monotone relationship between the volume of $A_i$ and the
difference of shifts, $a_i-a_j$. The Oliker-Prussner approach and the boundary
method both exploit this relationship, though in very different ways.
Whether applying the Oliker-Prussner algorithm or some variation such as Newton's method,
the Oliker-Prussner approach begins with approximated sets $\tilde{A}_i$, and
directly perturbs the approximated shift difference $\tilde{a}_i-\tilde{a}_j$
in order to bring $\mu(\tilde{A}_i)$ closer to $\nu(\vc{y}_i)$.
This approach is extended over all the shift differences,\footnote{They refer to
a set of shift differences $\set{a_i-a_j \mid i,\,j \in \N_n,\,i < j}$ as a \emph{weight vector}.}
making it, in essence, a method for solving the Monge-Kantorovich dual problem
with $c = \norm{\cdot}_2^2$.
Because the squared $2$-norm is strictly convex, and it ensures that the boundary
for each adjacent $A_i$ and $A_j$ is a hyperplane, algorithms based on the
Oliker-Prussner approach are generally able to quantify convergence behavior
and guarantee termination after a finite number of refinement steps.

Numerous efforts have been made to extend the approach proposed by Oliker
and Prussner.
An application-focused paper by Caffarelli et al.\ extends the Oliker-Prussner 
algorithm to $\R^3$, assuming special geometries~\cite{Caffarelli1999a}.
L\'{e}vy presents a parallelized Newton's method for three
dimensions, one which scales well when $Y$ consists of large numbers of Dirac
masses~\cite{Levy2015a}.
Other works, such as ~\cite{Mirebeau2015a}, attempt to integrate the
Oliker-Prussner approach with the Wide Stencil methods developed for continuous
Monge-Amp\`{e}re problems; see, e.g.,~\cite{Benamou2014a,Oberman2006a}.
All of these assume $c = \norm{\cdot}_2^2$.

A few authors have attempted to develop approaches
for ground costs other than the squared $2$-norm. Most of these
do not employ Oliker-Prussner.
In~\cite{Ruschendorf2000a}, \RU{} report on numerical experiments with
ground costs given by the Euclidean distance taken to the powers $2$,
$3$, $4$, and $10$.
They assume that $\mu$ is the uniform distribution, and test various weights 
and placements for the set $\set{\vc{y}_i}_{i=1}^n$.
When an exact solution cannot be directly determined, they fully discretize 
the problem and use a linear programming solver.

In~\cite{Schmitzer2016a}, Schmitzer works with cost functions
$c = \norm{\cdot}_2^p$ for $p \in (1,\,\infty)$, and applies a form of
adaptive scaling done by ``shielding'' regions:
his method attempts to determine points of influence in order to solve
primarily local problems.
He restricts his examples to $\R^2$.

Solving the semi-discrete problem for the $2$-norm is discussed in 
\cite{Barrett2007a}.\footnote{In 
\cite{Barrett2007a}, the partition of $A$ is called an ``optimal 
coupling.''}
Starting with an alternative form of \eref{MKpde}, taken 
from~\cite{Bouchitte2000a}, \BP{} develop a mixed formulation of the 
Monge-Kantorovich problem, which they solve using a standard finite element 
discretization.

Kitagawa's 2014 paper,~\cite{Kitagawa2014a}, offers a potentially broad
generalization of the Oliker-Prussner algorithm, which works for ground
costs other than $\norm{\cdot}_2^2$, provided those ground costs
satisfy strict conditions, including
Strong Ma-Trudinger-Wang; see also~\cite{Ma2005a}.
His proposals, while densely theoretical, do not include numerics or
an explicit iterative scheme.


As~\cite{Kitagawa2017a} states, the special case $c = \norm{\cdot}_2^2$
has two methods specifically designed for solving semi-discrete problems
directly: the Oliker-Prussner algorithm and the
damped Newton methods proposed in papers like~\cite{Aurenhammer1998a}.
Both rely on some variant of what we call
the Oliker-Prussner approach, described above.
However, approaches developed for fully discrete or continuous transport can also be
applied to the semi-discrete problems, though with varying degrees of effectiveness.
\RU{} apply a discrete linear program solver in~\cite{Ruschendorf2000a}, and the solver
\BP{} use in~\cite{Barrett2007a} was developed for continuous transport.

Discrete methods assume a fully discrete $(X,\,\mu)$ and $(Y,\,\nu)$,
and solve the resulting minimization problem using network flow minimization techniques.
As described in~\cite{Kovacs2015a}, there are over 20 established methods for 
solving such problems, and at least seven software packages capable of handling 
one or more of these methods.

Most approaches to the fully continuous Monge-Kantorovich problem
assume specific ground costs
and solve using techniques developed for 
elliptic partial differential equations, particularly those of the
Monge-Amp\`ere-type:
\begin{equation}\label{MKpde}
-\nabla \cdot (a \nabla u) = f,
\,\text{ where }\,
\abs{\nabla u} \leq 1,\,
a \geq 0,\,
\text{ and }
\abs{\nabla u} < 1 \implies
a = 0.
\end{equation}
If the ground cost function is strictly convex, or otherwise satisfies the
Ma-Trudinger-Wang
regularity 
conditions described in~\cite{Ma2005a}, such problems are well-posed.
To date, the requirements of well-posedness have largely restricted the application of such
continuous methods to well-behaved cost functions such as $\norm{\cdot}_2^2$ or a
regularized Euclidean distance.
Continuous methods currently in use apply finite difference, gradient descent, 
or the iterative Bregman projections (a.k.a.\ Sinkhorn-Knopp) algorithm, all attempting to map $X$ to a fully discretized 
$Y$~\cite{Froese2011a,Jordan1998a,Benamou2015a}.


As we will show, the boundary method offers a new approach to solving semi-discrete transport,
distinct from all of those described above.
By and large, the solution methods
described above only work for a specific fixed cost, usually $c=\norm{\cdot}_2^2$. 
The boundary method quickly solves problems with more general ground costs.
When the ground cost is a $p$-norm, with $p \in (1,\,\infty)$, the boundary method
provides a global rate of convergence that is proportional to the volume of $A$.


\section{Boundary Method}\label{s:boundary_method}
At a high level, the idea behind the boundary method is simple: track only the
boundaries between regions, without resolving the regions' interiors.
To do this in practice and obtain an efficient technique, we must account for
the interplay between discretization, a mechanism for
discarding interior regions, and a fast solver.

At its heart, the boundary method can be viewed as an adaptive refinement technique,
one which focuses on the shared region boundaries.
The method discards interior regions, but a well-chosen initial discretization
prevents any corresponding loss of accuracy.
The boundary method's strategy progressively refines the boundaries between
individual regions $A_i$. Thus, by the method's very nature, any initial configuration
must enclose the boundary in a way that allows it to be distinguished from the region interiors.
The necessary conditions for a well-chosen initial discretization are
presented in \tref{t:goodw1} and discussed in detail in \rref{r:goodw1}.

\subsection{Boundary identity and system of 
equations}\label{s:bound_ident}
For all $i,\,j \in \N_n$ such that $i \neq j$, let
\begin{equation}\label{Aij}
A_{ij} := A_i \cap A_j.
\end{equation}
The \emph{boundary set} is defined as
\begin{equation}\label{bdryset}
B := \bigcup_{1 \leq i < n}\, \bigcup_{i < j \leq n} A_{ij},
\end{equation}
and for each $i \in \N_n$, let the \emph{strict interior} of $A_i$ be defined 
as
\begin{equation}\label{e:oAi}
\mathring{A}_i := A_i \setminus B.
\end{equation}

For all $i,\,j \in \N_n$ such that $i \neq j$, define
$g_{ij} : X \to \R$ as
\begin{equation}\label{gij}
g_{ij}(\vc{x}) := c(\vc{x},\,\vc{y}_i) - c(\vc{x},\,\vc{y}_j).
\end{equation}
By Corollary~\ref{Bnonempt} below, $B \neq \varnothing$ and for each $\vc{x} \in 
B$ there exist $i,\,j \in \N_n$, $i \neq j$, such that $\vc{x} \in A_{ij}$.
Because $\vc{x} \in A_i$, we have
$F(\vc{x}) = a_i - c(\vc{x},\,\vc{y}_i)$, and because $\vc{x} \in A_j$, we have
$F(\vc{x}) = a_j - c(\vc{x},\,\vc{y}_j)$. Combining and rearranging these, we 
get
\begin{equation}\label{e:star}
g_{ij}(\vc{x}) = a_i - a_j\ ,
\quad\quad
\forall \vc{x} \in A_{ij}.
\end{equation}
Thus,
\eref{e:star} implies that $A_{ij}$ is a subset of a level set of $g_{ij}$; the
value $a_i-a_j$ is constant, regardless of which $\vc{x} \in A_{ij}$ is chosen.
Using this information, for each $i,\,j \in \N_n$, $i \neq j$, such that  
$A_{ij} \neq \varnothing$, we can define the constant \emph{shift difference}
\begin{equation}\label{e:aij}
a_{ij} := g_{ij}(\vc{x}_{ij})
\quad\quad
\forall\,
\vc{x}_{ij} \in A_{ij}.
\end{equation}

Given a sufficiently large set of linearly independent equations
of the form given in \eref{e:aij}, one could determine most or all of the
shifts $\set{a_i}_{i=1}^n$.
As we show in \tref{t:N-1exist}, 
it is possible to obtain exactly $(n-1)$ linearly independent
equations of the 
desired form, but a set of $n$ such independent equations does not exist.

Since we know that the set of shifts allows exactly one degree of freedom,
the boundary method's approach is to obtain $(n-1)$ well-chosen $a_{ij}$ 
values, fix one $a_i$, and use linearly independent equations of the form 
given in \eref{e:star} to solve for the remaining $(n-1)$ shifts.
The crucial observation is that for the $a_i$'s, there is 
no need to retain information about interior of the regions.

The \Was{} distance can also be computed without saving region 
interiors.
Once we have determined that $R \subset A_i$ for some region $R$, the 
(partial) \Was{} distance corresponding to $R$ is equal to
\begin{equation}\label{e:P_R}
P_{\scriptscriptstyle{R}} := \int_R c(\vc{x},\,\vc{y}_i) \, 
d\mu(\vc{x}),
\end{equation}
and the total \Was{} distance $P^*$ is equal to the sum of all such partial 
distances $P_{\scriptscriptstyle{R}}$, computed over every $A_i$.

Recognizing these facts, inherent in the shift characterization, inspired both
the boundary method's name and its guiding principles, summarized below:
\begin{center}
\fbox{
  \parbox[c]{2.53in}{\centering
Do \emph{not} solve for the entire transport plan;
\\
rather, identify region boundaries.
  }
}
\end{center}

To illustrate how this principle is implemented, we present the following 
example.

\begin{example}\label{x:steps}
Let $X = Y = [0,\,1]^2$.
Assume $\mu$ is the uniform probability density, so for all Borel sets
$S \subseteq A$, $\mu(S) = \abs{S}$, and that $\nu$ has uniform discrete 
probability density, so $\nu(y_i) = 1/n$ for $1 \leq i \leq n$.
Take $n=5$, with the five points where $\nu$ has nonzero density distributed 
as shown in Figure \ref{f:clrA1}.

Let $c$ be the squared Euclidean norm, $\norm{\vc{y}-\vc{x}}_2^2$.
Suppose a discretization with width $2^{-5}$ is sufficient to provide the 
desired accuracy and that we apply the boundary method with initial width $2^{-4}$.

Assume $\widetilde{P}$ is the \emph{partial transport cost}: the sum cost of transport
over all regions $P_R$ so far, where $P_R$ is defined as in \eref{e:P_R}.
Each iteration consists of two steps. In \aref{a:solve}, we discretize the remaining parts
of $A$ using the given width, and we solve the discrete transport problem. In \aref{a:discard},
we compute the transport cost of all boxes in the interior of each
region, add those costs to $\widetilde{P}$, and discard the computed boxes.
For the discard, remove the transported mass from $\nu$, and remove the
transported boxes from $A$
(so those regions can be safely ignored during any future discretized transport computations).

\fref{f:clrA1} shows the state of the boundary method during the first 
iteration. In \fsubref{f:clrA1}{f:clrA11}, we have just completed 
\aref{a:solve}: the 
discrete transport map has been computed, but we have not identified 
interior points or added anything to the partial transport cost $\widetilde{P}$.
\fsubref{f:clrA1}{f:clrA12} shows the state of the algorithm after 
\aref{a:discard}: the 
interior regions have been identified (shown in gray), the partial 
transport cost has been computed for those regions, giving us $\widetilde{P} = 
0.01387$, and those regions have been discarded.

\begin{figure}[htpb]
  \centering
  \subfloat[Iteration 1, \aref{a:solve}: $\widetilde{P} = 
0.00000$]{%
    \label{f:clrA11}
    \centering
    \resizebox*{0.3\textwidth}{!}{%
    \begin{overpic}[width=\textwidth]{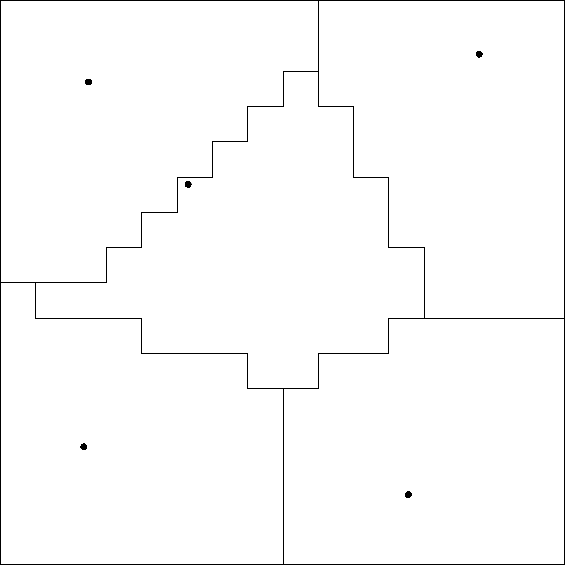}
    \put (16,82) {\scalebox{2.0}{$\vc{y}_0$}}
    \put (86,87) {\scalebox{2.0}{$\vc{y}_1$}}
    \put (34,64) {\scalebox{2.0}{$\vc{y}_2$}}
    \put (15,17) {\scalebox{2.0}{$\vc{y}_3$}}
    \put (73,09) {\scalebox{2.0}{$\vc{y}_4$}}\end{overpic}}
  }%
  \quad\quad\quad\quad%
  \subfloat[Iteration 1, \aref{a:discard}: $\widetilde{P} = 0.01387$]{%
    \label{f:clrA12}
    \centering
    \resizebox{0.3\textwidth}{!}{%
    \begin{overpic}[width=\textwidth]{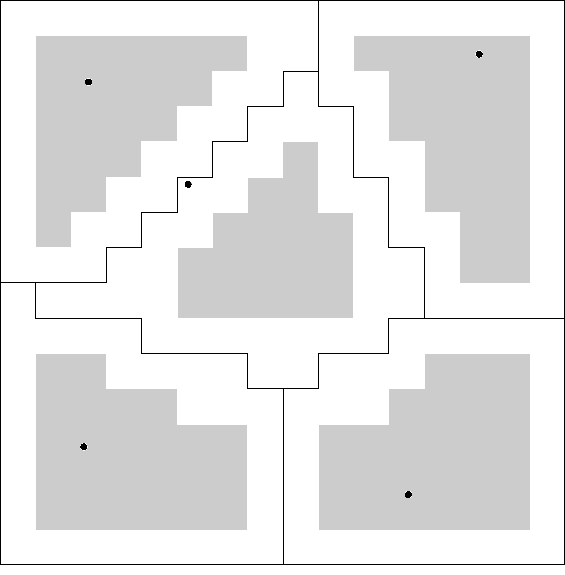}
    \put (16,82) {\scalebox{2.0}{$\vc{y}_0$}}
    \put (86,87) {\scalebox{2.0}{$\vc{y}_1$}}
    \put (34,64) {\scalebox{2.0}{$\vc{y}_2$}}
    \put (15,17) {\scalebox{2.0}{$\vc{y}_3$}}
    \put (73,09) {\scalebox{2.0}{$\vc{y}_4$}}\end{overpic}}
  }
\caption{Iteration 1 of \xref{x:steps}: $w_1 = 
2^{-4}$, computed regions in gray}\label{f:clrA1}
\end{figure}

\fref{f:clrA2} shows the state of the boundary method algorithm during the 
second iteration.
Here, the regions eliminated in Iteration 1 are shown in a darker gray, to
distinguish new interiors from those previously removed.
In \fsubref{f:clrA2}{f:clrA21}, \aref{a:solve} has just been completed. As can 
be seen by 
comparing \fsubref{f:clrA1}{f:clrA12} to \fsubref{f:clrA2}{f:clrA21}, the 
boundary and interior regions 
are the same ones that we had at the end of the first iteration, but refining 
the boundary set to width $w_2 = 2^{-5}$ allows us to compute a more refined 
transport map.
Since the regions in gray were discarded at the end of Iteration 1 
\aref{a:discard},
they are not part of the discrete transport solution computed during Iteration 
2.
Because \aref{a:solve} does not add to the identified interior 
regions, the partial \Was{} distance $\widetilde{P}$ is also unchanged from 
\fsubref{f:clrA1}{f:clrA12}.

After \aref{a:discard} of the second iteration, shown in 
\fsubref{f:clrA2}{f:clrA22}, more of the interiors have been identified.
The partial transport cost shows a 
corresponding increase: we now have $\widetilde{P} = 0.02898$.
Because we have achieved our desired refinement, a width of $2^{-5}$, we
end the iterative process.

We have not computed any transport cost for the white areas remaining in 
\fsubref{f:clrA2}{f:clrA22}. Hence, $\widetilde{P}$ is strictly less than the
actual transport cost $P^*$. We may want to perform further computations
on those white areas in order to approximate the remaining transport cost and
calculate an error bound for our approximation.

\begin{figure}[htpb]
  \centering
  \subfloat[Iteration 2, \aref{a:solve}: $\widetilde{P} = 0.01387$]{%
    \label{f:clrA21}
    \centering
    \resizebox{0.3\textwidth}{!}{%
    \begin{overpic}[width=\textwidth]{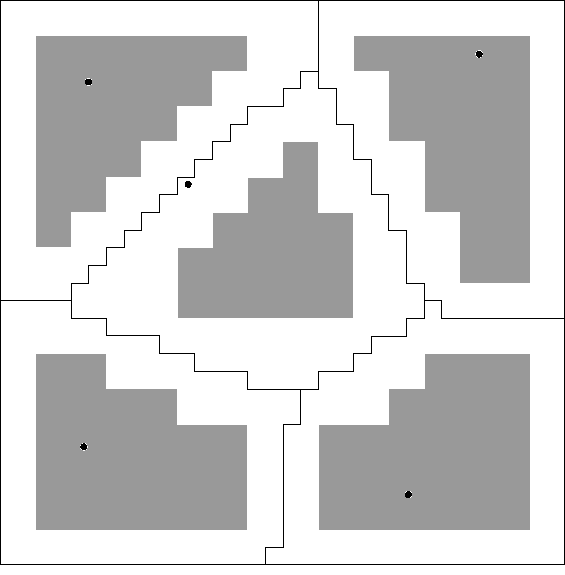}
    \put (16,82) {\scalebox{2.0}{$\vc{y}_0$}}
    \put (86,87) {\scalebox{2.0}{$\vc{y}_1$}}
    \put (34,64) {\scalebox{2.0}{$\vc{y}_2$}}
    \put (15,17) {\scalebox{2.0}{$\vc{y}_3$}}
    \put (73,09) {\scalebox{2.0}{$\vc{y}_4$}}\end{overpic}}
  }%
  \quad\quad\quad\quad%
  \subfloat[Iteration\ 2, \aref{a:discard}: $\widetilde{P} = 0.02898$]{%
    \label{f:clrA22}
    \centering
    \resizebox{0.3\textwidth}{!}{%
    \begin{overpic}[width=\textwidth]{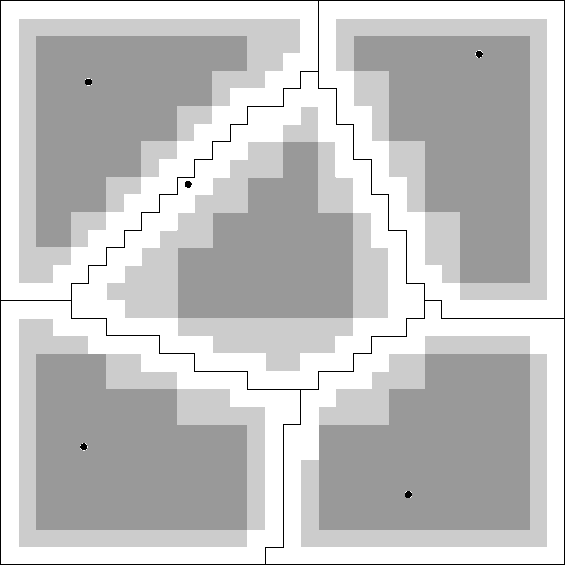}
    \put (16,82) {\scalebox{2.0}{$\vc{y}_0$}}
    \put (86,87) {\scalebox{2.0}{$\vc{y}_1$}}
    \put (34,64) {\scalebox{2.0}{$\vc{y}_2$}}
    \put (15,17) {\scalebox{2.0}{$\vc{y}_3$}}
    \put (73,09) {\scalebox{2.0}{$\vc{y}_4$}}\end{overpic}}
  }%
\caption{Iteration $2$ of \xref{x:steps}: $w_2 = 
2^{-5}$, computed regions in gray}\label{f:clrA2}
\end{figure}
\end{example}

\subsection{The boundary method}\label{s:bm}
We will now formalize the process described in \xref{x:steps}.
As described below, the boundary method generates a grid $A^r$ over 
the unevaluated region of $A$, and uses it to determine the subgrid $B^r$ 
containing the boundary set $B$.
This subgrid is determined by finding an optimal transport solution 
from the grid $A^r$ to the point set $\set{\vc{y}_i}_{i=1}^n$.

Although not strictly necessary, we will restrict ourselves to $A 
= [0,l]^d$ and apply a Cartesian grid over that region.
At the $r$-th refinement level of the algorithm, the grid will thus consist of 
a 
collection of boxes with width $w_r$ in each dimension of our discretization.
By a slight abuse of notation, we use $\vc{x}^r$ to refer to such a box, 
centered at the point $\vc{x}$.
Thus, $\mu(\vc{x}^r)$
refers to the $\mu$-measure of the box of width 
$w_r$ centered at $\vc{x}$.

\emph{Neighboring boxes} are those with center points 
that differ by no more than one unit in any discretization index.
The set of \emph{neighbors} of $\vc{x}$ is denoted $N(\vc{x})$ (defined 
in \eref{e:neighb}, below).
Because regions of $\mu$-measure zero need not be transported to any 
particular $\vc{y}_i$, boxes of positive weight that are adjacent to such 
regions are always retained.
We refer to such a box as an \emph{edge box}.
Thus, the set of edge boxes is
\begin{equation}\label{e:edgeAr}
\edge(A^r) :=
\set{
\vc{x} \in A^r
\mid
\mu(\vc{x}) > 0
\,
\text{ and }
\,
\exists\,
\vc{x}_n \in N(\vc{x})
\,
\text{ such that }
\,
\mu(\vc{x}_n) = 0
}.
\end{equation}
Because $A$ contains the support of $\mu$, every box of positive mass that is
adjacent to the boundary of $A$ is an edge box.

A box whose neighbors and itself all have positive measure is referred to as an 
\emph{internal box}.
The set of internal boxes is
\begin{equation}\label{e:intAr}
\inter(A^r) :=
\set{
\vc{x} \in A^r
\mid
\mu(\vc{x}) > 0
\,\,
\text{ and }
\,\,
\mu(\vc{x}_n) > 0
\,\,
\text{ for all }
\,\,
\vc{x}_n \in N(\vc{x})}.
\end{equation}
Boxes of $\mu$-measure zero are not part of $\edge(A^r)$ or 
$\inter(A^r)$ and they are discarded when the optimal transport problem is 
solved.
We need not be concerned about losing a region $A_i$ 
due to this discard process, since this would imply $\mu(A_i)=0$ 
(and hence $\nu(\vc{y}_i) = 0$, which contradicts the conditions in
\sref{s:semi-discrete}).

Region interiors are identified by comparing the destination of each $\vc{x} 
\in 
\inter(A^r)$ to the destinations of its neighbors.
Edge boxes are never considered part of a region interior, so they are passed 
directly to $B^r$.

In order to remove identified region interiors, we also maintain a running 
total of the untransported mass, given by \emph{partial measure} $\tilde{\nu}$.
To preserve the balance of the transport problem, each time a region 
$\vc{x}^r$ is transported from $A$ to $\vc{y}_i$, the remaining amount that can 
be transported to $\vc{y}_i$, $\tilde{\nu}(\vc{y}_i)$, must be reduced by 
$\mu(\vc{x}^r)$.

\begin{algorithm}
\centering
\fbox{
  \parbox[c]{0.8\textwidth}{
\begin{center}
\textbf{Boundary method algorithm}
\end{center}
\begin{algolist}
\setcounter{algolisti}{-1}
\item
Set $\widetilde{P} = 0$, $\tilde{\nu} = \nu$, and $r = 1$.
Create $A^r = A^1$ from $A$.
\item\label{a:solve}
Solve the discretized transport solution.
\item\label{a:discard}
For each $\vc{x} \in \inter(A^r)$:

Are the neighbors of $\vc{x}$ all transported to the same $\vc{y}_i$?
\begin{itemize}
\item If so, then $\vc{x}^r$ is in the interior of $A_i$:
\begin{itemize}
\item {[optional]} Add $\displaystyle{\int_{\vc{x}^r} c(\vc{z},\,\vc{y}_i) 
\,d\mu(\vc{z})}$ 
to $\widetilde{P}$.
\item Reduce the value of $\tilde{\nu}(\vc{y}_i)$ by $\mu(\vc{x}^r)$.
\item Remove $\vc{x}$ from $\inter(A^r)$.
\end{itemize}
\end{itemize}
\end{algolist}
The sets $\edge(A^r)$ and the reduced set $\inter(A^r)$ combine to form $B^r$.
\begin{algolist}
\setcounter{algolisti}{2}
\item\label{a:refine}
Is the desired refinement reached?
\begin{itemize}
\item
If not:
\begin{itemize}
\item
Refine $B^r$ to create $A^{r+1}$, increment $r$, and go to \aref{a:solve}.
\end{itemize}
\end{itemize}
\end{algolist}
Optionally, once the desired refinement level is reached:
\begin{algolist}
\setcounter{algolisti}{3}
\item\label{a:diffs}
Use $B^r$ to identify $(n-1)$ appropriate shift differences $\set{a_{ij}}$
\\ and solve for the shifts $\set{a_i}_{i=1}^n$.
\item\label{a:wass}
Use $\widetilde{P}$ and $B^r$ to approximate $W_1(\mu,\,\nu)$.
\end{algolist}
}}
\end{algorithm}

We can approximate the \Was{} distance $P^*$ by generating a running total over 
region interiors: $\widetilde{P}$.
This $\widetilde{P}$ is an increasing function of $r$, 
and for all $r$, $P^* \geq \widetilde{P}$.
The \Was{} distance over any remaining boundary region is evaluated at 
completion.

\begin{rem}\label{r:approx}
Further approximations may be required for a truly general algorithm.
Depending on $\mu$, it may be necessary to approximate the mass of each box,
$\mu(\vc{x}^r)$.
Depending on $\mu$ and $c$, the \Was{} distance over each box, given by
$\displaystyle{\int_{\vc{x}^r} c(\vc{z},\,\vc{y}_i) \,d\mu(\vc{z})}$, may also 
require approximation.
However, in this work we assume that the integrals can be computed exactly.
In practice, this is not a significant limitation.
Most numerical applications focus on the exactly-computable cases
where $\mu$ is uniform and $c$ is the Euclidean or squared-Euclidean distance.
Furthermore, as we show in \sref{s:numerics}, the set of exactly-computable
options is quite large.
\end{rem}

\subsubsection{Step (1): solving the discrete optimal transport 
problem}\label{s:ga}
The proofs in \sref{s:math} assume the discrete solver is exact, but in
practice we achieve good results using solvers whose error satisfies
reasonable bounds.
Thus, the ideal discrete algorithm should be \emph{fast}, have controlled 
error, 
and possess reasonable scaling properties.
To satisfy these requirements, and to bypass the shortcomings of standard 
discrete approaches, 
we have turned to the distributed relaxation methods known as 
auction algorithms;
see \cite{Bertsekas1998a} and \cite{Bertsekas1993a}.
(As it turns out, there are natural connections between auction algorithms
and the Oliker-Prussner algorithm for semi-discrete transport; see 
\cite{Merigot2013a}
for details).

We chose to apply a new auction algorithm, the 
{\emph{general auction}}, which we developed and presented in~\cite{Walsh2016a}.
The general auction is so named because it is based directly on the (more 
general) real-valued transport problem, rather than the integer-valued 
assignment problem which forms the foundation of other auction algorithms.
As described in~\cite{Walsh2016a}, it offers significant performance advantages 
over other auction algorithms.
Public domain \CC{} software implementing the general auction can be found 
on the Internet at~\cite{Walsh2016b}.

\subsubsection{Step (4): computing the shifts}\label{s:diffs}
Once we have reached a desired level of refinement for the boundary, we can
use the set $B^r$ to identify $(n-1)$ shift differences $a_{ij}$.
Finding the shift differences is not necessary once we have the boundary
(which is why Step (4) is optional), but the shift differences
allow one to reconstruct the entire transport map.

By completing Step (4), one can reduce the transport map
in $\R^d$ to a set
of $n$ real numbers $a_i$, greatly reducing storage requirements.
Also, building the reconstructed transport map, and comparing the value of each
$\mu(A_i)$ to its corresponding $\nu(\vc{y}_i)$, effectively
evaluates the actual (vs.\ worst case) error associated with
the boundary method's solution.

It is also worth considering that the
exact shifts $\set{a_i}_{i=1}^n$ correspond to a transport map giving the 
exact optimal solution of our semi-discrete problem.
The approximated shifts $\set{\tilde{a}_i}_{i=1}^n$, unless generating the 
same shift differences, correspond to a transport map giving the exact optimal 
solution to a \emph{different} semi-discrete problem, one whose measure $\nu$
at each $\vc{y}_i$, $i \in \N_n$, corresponds to the value of 
$\mu(\tilde{A}_i)$.
Hence, $\abs{\mu(\tilde{A}_i) - \nu(\vc{y}_i)}$ is the error in measure when
approximating $A_i$ by $\tilde{A}_i$.

\subsubsection{Step (5): approximating the Wasserstein 
distance}\label{s:wass}
Because some applications focus on determining the transport map,
rather than the Wasserstein distance, Step (5) is optional.
One could also skip the computation of $\widetilde{P}$ in Step (2),
since the Wasserstein distance can be computed in full
using only the transport map defined by the boundary set.
However, we find it convenient to compute as much of the distance
as possible within the boundary method algorithm,
establishing $\widetilde{P}$ one box at a time during Step (2).
By the time we reach \aref{a:wass}, the partial \Was{} distance $\widetilde{P}$ 
includes the exact cost of all the identified interior regions,
and all that remains is to determine the cost of the regions associated 
with $B^r$.

\section{Mathematical support}\label{s:math}
In this section, we provide mathematical support for the boundary 
method, assuming that all computations are solved exactly: both the discrete 
optimal transport problems handled by the general auction and the 
determinations of mass and \Was{} distance for individual boxes (see 
\rref{r:approx}).
We present three types of results:
on the shift characterization, on our system of equations,
and, finally, on the boundary method itself.

\subsection{Semi-discrete optimal 
transport and the shift characterization}\label{s:oc_proofs}
Here we examine the features of the shift characterization, defined in 
\sref{s:oc_form}, and consider what they can tell us about the semi-discrete 
optimal transport problem itself.
While many of these results can be found in other works 
(e.g.,~\cite{Gangbo1996a}),
detailing them fixes notation and sets the stage for the original theorems
developed in the following sections.

First, in Lemmas \ref{Lemma1} and \ref{Lemma2},
we develop theoretical support for the boundary method.

\begin{lemma}\label{Lemma1}
Let $a_i$ and $A_i$ be defined as in \dref{ShiftChar}.
Fix $i \in \N_n$. If $\vc{x} \in A_{i}$ and $j \in \N_n$, $j 
\neq i$, then the following hold:
\begin{align}\label{th:inAi}
g_{ij}(\vc{x}) &\leq a_i - a_j,
\\
\label{th:inAij}
g_{ij}(\vc{x}) &= a_i - a_j
\quad\quad
\iff
\quad\quad
\vc{x} \in A_{ij},\text{ and }
\\
\label{th:inAi-Aj}
g_{ij}(\vc{x}) &< a_i - a_j
\quad\quad
\iff
\quad\quad
\vc{x} \in A_{i} \setminus A_{j},
\end{align}
where $g_{ij}$ is defined in \eref{e:star} and $A_{ij}$ in \eref{Aij}.
\end{lemma}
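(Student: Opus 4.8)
The plan is to extract everything from the two defining facts: for any $\vc{x}\in A$ and any $k\in\N_n$ we have $F(\vc{x})\ge a_k-c(\vc{x},\vc{y}_k)$, with equality exactly when $\vc{x}\in A_k$. I would begin with \eqref{th:inAi}. Fixing $\vc{x}\in A_i$, the membership gives $F(\vc{x})=a_i-c(\vc{x},\vc{y}_i)$, while the max in \eqref{Fdef} gives $F(\vc{x})\ge a_j-c(\vc{x},\vc{y}_j)$; subtracting and rearranging yields $c(\vc{x},\vc{y}_i)-c(\vc{x},\vc{y}_j)\le a_i-a_j$, i.e. $g_{ij}(\vc{x})\le a_i-a_j$. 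This is the only inequality that needs to be argued from scratch; the other two parts are bookkeeping built on it.

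Next I would handle the biconditional \eqref{th:inAij}, still under the standing hypothesis $\vc{x}\in A_i$. For the forward direction, if $g_{ij}(\vc{x})=a_i-a_j$ then $a_j-c(\vc{x},\vc{y}_j)=a_i-c(\vc{x},\vc{y}_i)=F(\vc{x})$, so $\vc{x}\in A_j$ by definition of $A_j$, hence $\vc{x}\in A_i\cap A_j=A_{ij}$. Conversely, if $\vc{x}\in A_{ij}$ then $\vc{x}\in A_j$ forces $F(\vc{x})=a_j-c(\vc{x},\vc{y}_j)$, and equating this with $F(\vc{x})=a_i-c(\vc{x},\vc{y}_i)$ gives $g_{ij}(\vc{x})=a_i-a_j$. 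Note this direction of the equivalence does not even use \eqref{th:inAi}.

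Finally \eqref{th:inAi-Aj} follows by combining the first two parts. Given $\vc{x}\in A_i$: \eqref{th:inAi} says $g_{ij}(\vc{x})\le a_i-a_j$ always, and by the contrapositive of \eqref{th:inAij}, $g_{ij}(\vc{x})\ne a_i-a_j$ iff $\vc{x}\notin A_{ij}$, which (since $\vc{x}\in A_i$) is the same as $\vc{x}\notin A_j$, i.e. $\vc{x}\in A_i\setminus A_j$. Putting these together, $g_{ij}(\vc{x})<a_i-a_j$ iff $\vc{x}\in A_i\setminus A_j$.

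I do not anticipate a genuine obstacle here: the entire argument is elementary manipulation of the definitions of $F$, $A_i$, $A_{ij}$, and $g_{ij}$. The only thing requiring mild care is keeping the hypothesis $\vc{x}\in A_i$ active throughout parts \eqref{th:inAij} and \eqref{th:inAi-Aj} (the equivalences are false without it) and making sure the three statements are proved in the order above, so that \eqref{th:inAi-Aj} may legitimately invoke the earlier two rather than being re-derived.
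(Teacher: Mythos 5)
Your proposal is correct and follows essentially the same route as the paper's proof: derive \eqref{th:inAi} from the defining inequality $F(\vc{x})\ge a_j-c(\vc{x},\vc{y}_j)$ together with $F(\vc{x})=a_i-c(\vc{x},\vc{y}_i)$, establish both directions of \eqref{th:inAij} by rewriting $g_{ij}(\vc{x})=a_i-a_j$ as $a_j-c(\vc{x},\vc{y}_j)=F(\vc{x})$, and obtain \eqref{th:inAi-Aj} as a logical consequence of the first two. The only cosmetic difference is that the paper cites an earlier section for the forward direction of \eqref{th:inAij}, whereas you spell it out; the substance is identical.
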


\begin{proof}
Let us show \eref{th:inAi}.
By the definitions of $A_i$ and $F$,
\begin{equation*}
a_i - c(\vc{x},\,\vc{y}_i) = F(\vc{x})
\geq a_j - c(\vc{x},\,\vc{y}_j).
\end{equation*}
Rearranging terms gives
\begin{equation*}
c(\vc{x},\,\vc{y}_i) - c(\vc{x},\,\vc{y}_j) \leq a_i - a_j.
\end{equation*}

To show \eref{th:inAij}, first note that \sref{s:bound_ident} already 
explains how $\vc{x} \in A_{ij}$ implies $g_{ij}(\vc{x}) = a_i - a_j$.
Consider the converse:
Assume that $g_{ij}(\vc{x}) = a_i - a_j$.
Rewriting, we find that
$a_j - c(\vc{x},\,\vc{y}_j) = a_i - c(\vc{x},\,\vc{y}_i)
= F(\vc{x})$, with $F$ defined in \eref{Fdef}.
This implies $\vc{x} \in A_j$, and since $\vc{x} \in A_i$, therefore 
$\vc{x} \in A_{ij}$.
\eref{th:inAi-Aj} is a consequence of Equations 
\eqref{th:inAi} and \eqref{th:inAij}.
\end{proof}

\begin{lemma}\label{Lemma2}
Let $a_i$ and $A_i$ be defined as in \dref{ShiftChar} and $A_{ij}$ as in 
\eref{Aij}.
Assume
$c$ satisfies the triangle inequality.
For all $i,\,j \in \N_n$,
$i \neq j$,
\begin{itemize}
\item[(a)]
If $c(\vc{y}_i,\,\vc{y}_j) = a_i-a_j$, then $A_j \subseteq A_{ij}$.
\item[(b)]
If $c(\vc{y}_i,\,\vc{y}_j) < a_i-a_j$, then $A_j = \varnothing$.
\end{itemize}
\end{lemma}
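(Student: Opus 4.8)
The plan is to leverage \lref{Lemma1} together with the triangle inequality on $c$, applied to the three points $\vc{x}$, $\vc{y}_i$, $\vc{y}_j$. The key observation is that for any $\vc{x} \in A_j$, the quantity $g_{ij}(\vc{x}) = c(\vc{x},\vc{y}_i) - c(\vc{x},\vc{y}_j)$ is bounded above by $c(\vc{y}_i,\vc{y}_j)$: indeed the triangle inequality gives $c(\vc{x},\vc{y}_i) \le c(\vc{x},\vc{y}_j) + c(\vc{y}_j,\vc{y}_i)$, and since $c$ is symmetric this rearranges to $g_{ij}(\vc{x}) \le c(\vc{y}_i,\vc{y}_j)$. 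This holds for \emph{every} $\vc{x}$, not just those in $A_j$, but it is exactly what is needed to pin down the two cases.

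For part (a), I would argue as follows. Let $\vc{x} \in A_j$. Applying \eref{th:inAi} from \lref{Lemma1} with the roles of $i$ and $j$ interchanged (i.e., using $\vc{x} \in A_j$) gives $g_{ji}(\vc{x}) \le a_j - a_i$, which is the same as $g_{ij}(\vc{x}) \ge a_i - a_j$. On the other hand, the triangle-inequality bound above gives $g_{ij}(\vc{x}) \le c(\vc{y}_i,\vc{y}_j) = a_i - a_j$ by hypothesis. Combining the two inequalities forces $g_{ij}(\vc{x}) = a_i - a_j$, and then \eref{th:inAij} of \lref{Lemma1} (now using $\vc{x} \in A_j$, so with $i$ and $j$ swapped) yields $\vc{x} \in A_{ji} = A_{ij}$. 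Since $\vc{x} \in A_j$ was arbitrary, $A_j \subseteq A_{ij}$.

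For part (b), suppose toward a contradiction that $A_j \neq \varnothing$ and pick $\vc{x} \in A_j$. As in part (a), \eref{th:inAi} with $i,j$ swapped gives $g_{ij}(\vc{x}) \ge a_i - a_j$. But the triangle-inequality bound gives $g_{ij}(\vc{x}) \le c(\vc{y}_i,\vc{y}_j) < a_i - a_j$ by hypothesis, a contradiction. Hence $A_j = \varnothing$.

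The only real subtlety — and the step I would be most careful about — is bookkeeping the index swap when invoking \lref{Lemma1}: the lemma is stated for $\vc{x} \in A_i$, so to use the hypothesis $\vc{x} \in A_j$ one applies it with the pair $(j,i)$ in place of $(i,j)$, and must then translate $g_{ji} = -g_{ij}$ and $a_j - a_i = -(a_i - a_j)$ correctly. Everything else is a one-line application of symmetry and the triangle inequality for $c$, so no genuine obstacle arises; the proof is short.
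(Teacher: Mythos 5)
Your proof is correct and takes essentially the same approach as the paper's: both hinge on the triangle-inequality bound $g_{ij}(\vc{x}) \leq c(\vc{y}_i,\vc{y}_j)$ together with the lower bound $g_{ij}(\vc{x}) \geq a_i - a_j$ that holds whenever $\vc{x} \in A_j$. The only difference is organizational: you obtain the lower bound by invoking \lref{Lemma1} with the indices $(j,i)$, whereas the paper re-derives it directly from the definition of $F$ as a maximum; your routing through \lref{Lemma1} (with the index swap carefully tracked) is a clean and valid shortcut.
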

\begin{proof}
For Part (a), because $c$ satisfies the triangle inequality, for all 
$\vc{x}\in A$,
\begin{align}\label{TriangIneq}
\begin{split}
c(\vc{x},\,\vc{y}_i) &\leq c(\vc{x},\,\vc{y}_j) + c(\vc{y}_i,\,\vc{y}_j) \\
c(\vc{x},\,\vc{y}_i) &\leq c(\vc{x},\,\vc{y}_j) + a_i - a_j \\
a_j - c(\vc{x},\,\vc{y}_j) &\leq a_i - c(\vc{x},\,\vc{y}_i).
\end{split}
\end{align}
Suppose $\vc{x} \in A_j$. Then
$a_i - c(\vc{x},\,\vc{y}_i) \geq a_j - c(\vc{x},\,\vc{y}_j) = F(\vc{x})$, by 
\eref{Fdef}. 
Because $F$ is defined as the maximum such difference, this implies $a_i - 
c(\vc{x},\,\vc{y}_i) = F(\vc{x})$, and so $\vc{x} \in A_i$. Further, since 
$\vc{x}$ is an element of $A_i$ and $A_j$, $\vc{x} \in A_{ij}$.
Therefore, $A_j \subseteq A_{ij}$.

To show (b), note that \eqref{TriangIneq} now gives
$a_j - c(\vc{x},\,\vc{y}_j) < a_i - c(\vc{x},\,\vc{y}_i)$.
Hence, for all $\vc{x} \in A$,
$F(\vc{x}) \geq a_i - c(\vc{x},\,\vc{y}_i)
> a_j - c(\vc{x},\,\vc{y}_j)$.
Therefore, $A_j = \varnothing$.
\end{proof}

\begin{lemma}\label{l:Fcont}
Let $F(\vc{x})$ be defined by \eref{Fdef}.
If the ground cost function $c(\vc{x},\,\vc{y})$ is continuous on $X \times Y$, 
then
$F(\vc{x})$ is a continuous function of $\vc{x}$.
\end{lemma}
\begin{proof}
Assume $c$ is defined as a continuous function in $X \times Y$. 
Thus, for all $i \in \N_n$, $a_i - c\vc({x},\,{y}_i)$ is a continuous 
function of $\vc{x}$. Since $F$ is the maximum of a finite set of 
continuous functions, $F$ is itself a continuous function of $\vc{x}$.
\end{proof}

\begin{definition}[$F$ induces a $\mu$-partition of $A$]\label{d:part_A}
Let $F$ be as defined in \eref{Fdef}, and the sets $A_i$ as defined in 
\eref{Aidef} for $i \in \N_n$.
Then one says $F$ \emph{induces a $\mu$-partition} of the 
set $A$ if
\begin{enumerate}
\item $\mu(A) < \infty$,
\item for all $i,\,j \in \N_n$, $i\neq j$, $\mu(A_{ij})=0$ (for $A_{ij}$ as 
defined in \eref{Aij}),
\item
$\sum_{i=1}^n \mu(A_i) = \mu(A)$, and
\item for all $i \in \N_n$, $\mu(A_i) = \nu(\vc{y}_i) > 0$.
\end{enumerate}
\end{definition}

\begin{lemma}\label{l:muPartition}
Suppose one has a semi-discrete transport problem, as described in 
\sref{s:semi-discrete}.
Let $F$ be as defined in \eref{Fdef}, the sets $A_i$ as defined in 
\eref{Aidef} for $i \in \N_n$, and
$B$ as defined in \eref{bdryset}.
Then $F$ induces a $\mu$-partition of $A$ if and only if 
$\mu(B) = 0$.
\end{lemma}
\begin{proof}
If $F$ induces a $\mu$-partition of $A$, by \dref{d:part_A}, $\mu(B) = 0$.
For the converse, assume $F$ and the sets $A_i$ are defined as given, and
let $A_{ij}$ be defined by \eref{Aij}.
Because $\mu$ is a probability density function, $\mu(A) = 1 < \infty$.
Because $\mu$ is a non-negative measure, $\mu(B) = 0$ implies that, for all 
$i,\,j \in \N_n$, $i \neq j$, $\mu(A_{ij}) = 0$.

For any $\mu$-measurable set $S \subseteq X$, $S = S_1 \cup S_2$,
\begin{equation*}
\mu(S_1) + \mu(S_2) = \mu(S) + \mu (S_1 \cap S_2),
\end{equation*}
and since $\mu(X) < \infty$,
\begin{equation*}
\mu(S) = \mu(S_1) + \mu(S_2) - \mu(S_1 \cap S_2).
\end{equation*}
Proceeding inductively, it follows that
\begin{equation*}
S = \bigcup_{i=1}^n S_i, \text{ all $\mu$-measurable }
\quad \implies \quad
\mu(S) = \sum_{i=1}^n \mu(S_i)
- \sum_{i=1}^n \sum_{\substack{j=1\\j \neq i}}^n \mu(S_i \cap S_j).
\end{equation*}
Thus,
\begin{align*}
1 = \mu(A) &= \sum_{i=1}^n \mu(A_i)
- \sum_{i=1}^n \sum_{\substack{j=1\\j \neq i}}^n \mu(A_{ij})
= \sum_{i=1}^n \mu(A_i).
\end{align*}
For all $i,\,j \in \N_n$, $i \neq j$, $\mu(A_i \cap A_j) = 0$, and therefore
$\mu(A_i) = \nu(\vc{y}_i)$.
\end{proof}
\begin{rem}
Instances of $\mu(B)>0$ appear quite often, though (as we will show)
$\mu(B)=0$ for the $p$-norm cost functions we have assumed. For an
example of $\mu(B)>0$ in the literature, see Figure 37 
of~\cite{Aurenhammer1991a}.
We include a nearly identical example as \fsubref{f:exact}{f:badManhattan} of 
our paper,
along with a discussion of this behavior.
\end{rem}

Given our definition of the semi-discrete problem in \sref{s:semi-discrete}, 
Corollary 4 of \cite{Cuesta1993a} provides a sufficient condition for 
the existence of a Monge solution that is unique $\mu$-a.e.
For convenience, we restate their conclusion here, as the following:
\begin{theorem}\label{t:CuestaAlbertos}
Given the definition of $g_{ij}$ in \eref{e:star},
suppose that the support of $\nu$ is finite, $c$ is continuous, $\mu$ is tight,
and
\begin{equation}\label{e:suffUniqueMonge}
\mu\left( \set{\vc{x} \in A \mid g_{ij}(\vc{x}) = k } \right) = 0
\quad\quad
\forall\, i,\,j \in \N_n,\,i\neq j,
\quad\quad
\forall\, k \in \R.
\end{equation}
Then there exists an optimal transport map, $T:X \to Y$, that solves the Monge 
problem, and $T$ is $\mu$-a.e.\ unique.
\end{theorem}
This condition leads directly to the following theorem.

\begin{theorem}\label{t:unique_solution}
A semi-discrete transport problem, as described in \sref{s:semi-discrete}, has 
an associated transport map $T$, 
a function $F$, as described in \eref{Fdef}, 
and sets $\set{A_i}_{i=1}^n$, as described in \eref{Aidef}, such that for all 
$\vc{x} \in A$,
\begin{equation}
\vc{x} \in \mathring{A}_i \text{ for some } i \in \N_n
\quad\quad \implies \quad\quad
T(\vc{x}) = \vc{y}_i,
\end{equation}
where $\mathring{A}_i$ is the strict interior of $A_i$, as defined in 
\eref{e:oAi}.
In other words,
$F$ induces a $\mu$-partition of $A$ and
$T$ agrees with $F$ on $A \setminus B$.
Furthermore, $T$ is unique $\mu$-a.e.
\end{theorem}

\begin{proof}
Let $A_{ij}$ be defined as given in \eref{Aij}, $B$ as given in \eref{bdryset},
and $g_{ij}$ as in \eref{e:star}. 
Consider the requirements given in \sref{s:semi-discrete}.
\cref{c:nuFinite} ensures that $\nu$ is finite, and
\cref{c:pNorm} implies $c$ is continuous.
We know that $A \subseteq \R^d$, so $A$ is a Polish space,
and \cref{c:convComp} assures us that $A$ is compact. Because every probability
measure on a compact Polish space is tight\footnote{see e.g.\ Theorem
3.2 of~\cite[p.\ 29]{Parthasarathy1967a}}, $\mu$ must be tight.
Because \cref{c:pNorm} requires that the ground cost is equal to a $p$-norm
with $p \in (1,\,\infty)$,
\begin{equation}\label{e:0wrtLm}
\abs*{ \set{\vc{x} \in A \mid g_{ij}(\vc{x}) = k }} = 0
\quad\quad
\forall\, i,\,j \in \N_n,\,i\neq j,
\quad\quad
\forall\, k \in \R.
\end{equation}
By \cref{c:absCont}, $\mu$ is absolutely continuous, and so
\begin{equation*}
\mu\left( \set{\vc{x} \in A \mid g_{ij}(\vc{x}) = k } \right) = 0
\quad\quad
\forall\, i,\,j \in \N_n,\,i\neq j,
\quad\quad
\forall\, k \in \R,
\end{equation*}
as required by \eref{e:suffUniqueMonge}
(see~\cite{Walsh2017a} for another argument).
Therefore, the conditions of \tref{t:CuestaAlbertos} are satisfied.

Let the function $F$ and sets $\set{A_i}_{i=1}^n$ be as described in 
\dref{ShiftChar}.
For any $i,\,j\in\N_n$, $i\neq j$, $A_{ij} \subseteq \set{\vc{x} \in A \mid
g_{ij}(\vc{x}) = k }$ for some fixed $k \in \R$. Hence,
it follows that $\mu(B)=0$, and thus, by \lref{l:muPartition}, $F$ induces a 
$\mu$-partition of $A$. Therefore, we can construct a transport plan $T$ that 
satisfies the semi-discrete problem and agrees with $F$ on $A \setminus B$. 
Furthermore, by \tref{t:CuestaAlbertos}, $T$ is unique $\mu$-a.e.
\end{proof}


\begin{rem}
A close reading of the text of \tref{t:unique_solution} reveals that the 
guarantee of $\mu(B)=0$ derives
directly from the fact that $\abs{B}=0$; see \eref{e:0wrtLm}. Absolute 
convergence does the rest. In practice, this means that the boundary
method forces a unique transport map on all of $A$, even regions where $\mu$ 
vanishes and any other map would
achieve the same Wasserstein measure. For an example of this, see \fref{f:zero}.
This behavior stems from a natural (unstated) corollary to 
\tref{t:unique_solution}: the boundaries identified by our method
are a.e.\ unique with respect to the Lebesgue measure.
The convexity of $A$ is required to guarantee the existence of the requisite 
boundaries.
Otherwise, the network of regions might not form a connected graph.
\end{rem}

\subsection{Existence of linearly independent boundary 
equations}\label{s:exist}
To prove the existence of $(n-1)$ linearly independent equations
of the form shown in \eref{e:aij}, we will investigate the structure of the 
boundary set using a connected graph.\footnote{For a different approach, where
the cost is the squared-Euclidean distance, see~\cite{Aurenhammer1987a}.}

\begin{definition}\label{graphG}
Assume the definition of $A_{ij}$ given in \eref{Aij}.
Let $G$ be a graph with $n$ vertices $v_1,\,\ldots,\,v_n$. The edge 
$(v_i,\,v_j)$ is contained in the edge set of $G$ if and only if $A_{ij}$ is 
non-empty.
We refer to $G$ as the \emph{adjacency graph} of our transport problem.
\end{definition}

\begin{lemma}\label{l:Gconnected}
Let $G$ be defined as given in \dref{graphG}.
If the set $A$ is convex and compact, then
$G$ is a connected graph.
\end{lemma}
\begin{proof}
Assume to the contrary that $G$ is not a connected graph. Then we can write 
$G$ as the union of two disjoint nonempty subgraphs, $G = G_1 \cup G_2$, such 
that no 
vertex $v_1$ in $G_1$ has a path connecting it to any vertex 
$v_2$ in $G_2$.

Construct
\begin{equation*}
\tilde{A}_1 := \bigcup_{v_i \in G_1} A_i
\quad\quad \text{ and } \quad\quad
\tilde{A}_2 := \bigcup_{v_j \in G_2} A_j,
\end{equation*}
where each subset is defined as in \eref{Aidef}.
Since $G_1 \neq \varnothing$ and $G_2 \neq \varnothing$,
$\tilde{A}_1 \neq \varnothing$ and $\tilde{A}_2 \neq \varnothing$.
Because $G_1$ and $G_2$ are disjoint, and no paths connect them, it follows 
that $\tilde{A}_1 \cap \tilde{A}_2 = \varnothing$. Since the union of $G_1$ and 
$G_2$ is $G$,
$\tilde{A}_1 \cup \tilde{A}_2 = A$.

Suppose $A_i \subseteq \tilde{A}_1$, $A_j \subseteq \tilde{A}_2$. Then $A_{ij} = 
\varnothing$.
Because $A$ is a compact set,
$A$ is a closed and bounded, and hence the definition given in \eref{Aidef} 
implies that $A_i$ and $A_j$ must each also be closed and bounded.
Thus, $A_i$ and $A_j$ are disjoint compact sets in the Hausdorff space 
$\R^d$.
This implies $A_i$ and $A_j$ are separated by some positive distance 
$\epsilon_{ij}$.
Because this is true for all $A_i \subseteq \tilde{A}_1$ and $A_j \subseteq 
\tilde{A}_2$, there 
exists $\epsilon > 0$, the minimum over all such 
$\epsilon_{ij}$.

Let $\vc{x}_1 \in \tilde{A}_1$, $\vc{x}_2 \in \tilde{A}_2$, and for all $t \in 
[0,\,1]$, define
\begin{equation*}
\vc{x}_t = (1-t)\vc{x}_1 + t\vc{x}_2.
\end{equation*}
Because $\epsilon > 0$, there exists $(t_0,\,t_1) \subseteq [0,\,1]$, 
$\abs{t_1 - t_0} \geq \epsilon$, such that
$t \in (t_0,\,t_1)$ implies $\vc{x}_t \notin \tilde{A}_1 \cup \tilde{A}_2 = A$.
This contradicts the convexity of $A$.
Hence, $G$ is connected.
\end{proof}

\begin{corollary}\label{Aijnonempt}
Assume $n \geq 2$ and let $A_{ij}$ be defined by \eref{Aij}.
If $i \in \N_n$, there exists $j \in 
\N_n$, such that $j \neq i$ and $A_{ij} \neq \varnothing$.
\end{corollary}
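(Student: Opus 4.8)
The plan is to deduce this immediately from \tref{t:Gconnected}, which asserts that the adjacency graph $G$ is connected. The only work is to translate "connected" into "no isolated vertices," and then to read off the conclusion from \dref{graphG}.

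First I would recall that in the semi-discrete setting we have assumed $n \geq 2$, so $G$ has at least two vertices. Fix $i \in \N_n$ and pick any $k \in \N_n$ with $k \neq i$ (possible since $n \geq 2$). By \tref{t:Gconnected} there is a path in $G$ from $v_i$ to $v_k$; since $v_i \neq v_k$, this path has length at least one, hence its first edge is incident to $v_i$. Write that edge as $(v_i, v_j)$ for some $j \in \N_n$ with $j \neq i$. By \dref{graphG}, the presence of the edge $(v_i, v_j)$ in $G$ means precisely that $A_{ij} \neq \varnothing$, which is the desired conclusion.

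There is essentially no obstacle here; the corollary is a direct consequence of the connectedness established in the preceding theorem, and the argument is just the standard observation that a connected graph on at least two vertices has minimum degree at least one. If one wanted to avoid invoking a path explicitly, an equivalent phrasing would be: were $A_{ij} = \varnothing$ for every $j \neq i$, then $v_i$ would be isolated in $G$, so $\{v_i\}$ and its complement would disconnect $G$, contradicting \tref{t:Gconnected}. Either formulation is a one-line proof, so I would keep it short and simply cite \tref{t:Gconnected} and \dref{graphG}.
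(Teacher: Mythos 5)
Your proposal is correct and matches the paper's own proof, which likewise argues by contradiction: if $A_{ij} = \varnothing$ for all $j \neq i$, then $v_i$ is isolated (since $n \geq 2$ there are other vertices), contradicting the connectedness of $G$ from \tref{t:Gconnected}. Your two phrasings (via a path, or via the isolated-vertex contrapositive) are both equivalent to the paper's one-line argument.
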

\begin{proof}
Assume the contrary for some $i$, and apply \dref{graphG}. Since $n \geq 2$, $G$ 
includes at least two 
vertices, and $v_i$ is disconnected from the rest of $G$, which contradicts 
\lref{l:Gconnected}.
\end{proof}

\begin{corollary}\label{Bnonempt}
Let $A_{ij}$ be defined by \eref{Aij} and $B$ by \eref{bdryset}.
If $n \geq 2$, then the boundary set $B$ is nonempty, and for each $\vc{x} \in 
B$, there exist 
$i,\,j \in \N_n$ such that $i \neq j$ and $\vc{x} \in A_{ij}$.
\end{corollary}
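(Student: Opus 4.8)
The plan is to read off both assertions directly from the definition of $B$ in \eref{bdryset} together with Corollary~\ref{Aijnonempt}; no new construction is required. For the pointwise statement, I would note that \eref{bdryset} expresses $B = \bigcup_{1 \le i < n}\bigcup_{i < j \le n} A_{ij}$, so every $\vc{x} \in B$ belongs to some $A_{ij}$ with $1 \le i < j \le n$; in particular $i \ne j$, and by \eref{Aij} this membership is precisely $\vc{x} \in A_i \cap A_j$, which is the second claim.

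Next I would prove $B \ne \varnothing$. Applying Corollary~\ref{Aijnonempt} with $i = 1$ (legitimate since $n \ge 2$) produces some $j \in \N_n$, $j \ne 1$, with $A_{1j} \ne \varnothing$. Because intersection is symmetric, $A_{1j} = A_1 \cap A_j = A_{j1}$, and since $j > 1$ this set occurs as one of the terms indexed by a pair $i' < j'$ in the union \eref{bdryset} (namely $i' = 1$, $j' = j$). Hence $B \supseteq A_{1j} \ne \varnothing$.

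I do not expect a genuine obstacle: the substantive work --- connectedness of the adjacency graph --- has already been carried out in Theorem~\ref{t:Gconnected} and packaged as Corollary~\ref{Aijnonempt}. The only thing to watch is the mismatch between the ordered index range $i < j$ in \eref{bdryset} and the unordered pairs produced by Corollary~\ref{Aijnonempt}, which the symmetry $A_{ij} = A_{ji}$ resolves.
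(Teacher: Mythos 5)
Your proposal is correct and matches the paper's argument, which likewise derives both claims directly from the definition of $B$ in \eref{bdryset} together with Corollary~\ref{Aijnonempt}. The paper leaves these steps implicit in a one-line proof; your careful handling of the ordered index range versus unordered pairs is a harmless elaboration of the same reasoning.
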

\begin{proof}
This follows from Corollary \ref{Aijnonempt} and the 
definition of $B$ in \eref{bdryset}.
\end{proof}

\begin{lemma}\label{l:nocycles}
Assume a shift characterization, as described in \dref{ShiftChar}, where $n \geq 
2$ and the
shifts $\set{a_i}_{i=1}^n$ are unknown.
Let $G$ be the adjacency graph of the transport problem given in \dref{graphG}, 
and
let $H$ be a 
subgraph of $G$ that includes all $n$ vertices.
Define the system of equations
\begin{equation}\label{systemS}
S := \set{ a_i - a_j = a_{ij} \mid (v_i,\,v_j) \in
\text{ the edge set of } H },
\end{equation}
where each $a_{ij}$ is given by some constant.
The system of equations $S$ is linearly independent with respect to the 
shifts $\set{a_i}_{i=1}^n$ if and only if $H$ contains no cycles.
\end{lemma}
\begin{proof}
$(\Longrightarrow)$
Suppose $H$ contains the cycle 
$(v_{i_1},\,v_{i_2},\,\ldots,\,v_{i_k},\,v_{i_1})$.
Then $S$ contains the linear system
\begin{equation*}
M
\begin{bmatrix}
a_{i_1} \\ \vdots \\ a_{i_{k-1}} \\ a_{i_k}
\end{bmatrix}
=
\begin{bmatrix}
a_{i_1i_2} \\ \vdots \\ a_{i_{k-1}i_k} \\ a_{i_ki_1}
\end{bmatrix}
,
\quad\quad
\text{ where }
M =
\begin{bmatrix}
\phantom{-}1 & -1 &       &    &    \\
   &    &\ddots &    &    \\
   &    &       &  \phantom{-}1 & -1 \\
-1 &    &       &    &  \phantom{-}1 \\
\end{bmatrix}.
\end{equation*}
Because $\det(M) = 0$, we know $S$ is linearly dependent.

$(\Longleftarrow)$ Suppose instead that $S$ is linearly dependent.
Given the form of the equations in $S$, we can assume without loss of 
generality that $S$ contains the equations
$a_{i_ji_{j+1}} = a_{i_j} - a_{i_{j+1}}$,
$\forall j \in \N_{k-1}$,
and that $a_{i_1i_k} = a_{i_1} - a_{i_k}$ is also in $S$.
By the definition of $S$, these equations imply that the edges 
$(v_1,\,v_2),\,(v_2,\,v_3),\,\ldots,\,(v_{k-1},\,v_k)$, and $(v_k,\,v_1)$ are 
contained in $H$.
Together, these edges generate the cycle 
$(v_{i_1},\,v_{i_2},\,\ldots,\,v_{i_k},\,v_{i_1})$, so $H$ contains at least 
one cycle.
\end{proof}

\begin{theorem}\label{t:N-1exist}
Assume a shift characterized problem, as described in \dref{ShiftChar}, where $n 
\geq 2$
and the shifts $\set{a_i}_{i=1}^n$ are unknown.
Then
there exists at least one system of exactly $(n-1)$ equations of the form $a_i 
- a_j = a_{ij}$ that is linearly independent with respect to the
set of shifts 
$\set{a_i}_{i=1}^n$, with each $a_{ij}$ constant.
No system of $n$ independent equations exists.
\end{theorem}
\begin{proof}
Let $G$ be as given in \dref{graphG}.
Because $G$ is a connected graph, we can always create a spanning tree $H$ that 
is a subgraph of $G$. Let $S$ be the corresponding set of linear equations, 
defined as described in \eqref{systemS}. As a spanning tree, $H$ contains 
$(n-1)$ edges and $H$ has no cycles, so by \lref{l:nocycles}, we know $S$
contains exactly $(n-1)$ linearly independent equations.

Suppose a set $S$ of $n$ linearly independent equations exists, all of the 
form 
$a_i - a_j = a_{ij}$. Because there are $n$ unknowns in the set of shifts, 
there is exactly one solution set $\set{a_i}_{i=1}^n$.
Fix $\sigma \neq 0$ and for all $i \in \N_n$, define $\tilde{a}_i = a_i + 
\sigma$. For each equation in $S$, 
$\tilde{a}_i - \tilde{a}_j = a_i - a_j = a_{ij}$.
Thus, $\set{\tilde{a}_i}_{i=1}^n$ is also a solution to $S$. This contradicts 
the uniqueness of $\set{a_i}_{i=1}^n$, and therefore no such set of $n$ 
linearly independent equations exists.
\end{proof}

\subsection{Discretization for the boundary 
method}\label{s:bm_math}
In the first two subsections below,
we give some results on 
how the grid-points interact with the 
underlying space. In sections \ref{s:aij_err} and \ref{s:err_c} we present 
error 
bounds.
In section \ref{s:Br_vol} we consider issues of volume and containment: 
here we ensure that one can have $B \subseteq \cl{B}^r$ for all $r$, and show 
that $\abs{\cl{B}^r} \to 0$ as $r \to \infty$.
Finally, \sref{s:Wass_err} puts bounds on the error for the \Was{} distance 
approximation.

\subsubsection{Discretization definitions}\label{s:discret}
As described in \sref{s:bm}, we discretize the region $A$ using a regular 
Cartesian grid, and refine the grid over multiple iterations, with the aim of 
refining only the grid region containing the boundary set.

\begin{definition}\label{d:adjV}
Let $\mathcal{V}$ be the set of \emph{adjacency vectors} for all discretizations 
of 
$A$.
We choose $\mathcal{V}$ to be the linear combinations of the 
standard unit vectors, $e_1,\,\ldots,\,e_d$, with coefficients 
$\pm1$.
We specifically exclude the zero vector from the set, 
so $\abs{\mathcal{V}} = 3^d -1$.
If $d = 2$, $\mathcal{V}$ equals
\begin{equation}
\mathcal{V} := \set{\, (-1,\,-1),\,(0,\,-1),\,(-1,\,0),\,(-1,\,1),\,
(1,\,-1),\,(1,\,0),\,(0,\,1),\,(1,\,1)\,
}.
\end{equation}
\end{definition}

Let $r \in \N$ be the current discretization level, and $w = w_r$ be the 
\emph{width} of the discretization at level $r$.
Let $A^r$ be the $r$-th \emph{point set}, the set of points $\vc{x}$ included 
in the $r$-th discretization of $A$.
Since we discard boxes of $\mu$-measure zero during the transport step, assume 
without loss of generality that $\mu(\vc{x}^r) > 0$ for all $\vc{x} \in A^r$.

For each iteration $r$, let
\begin{equation}\label{e:Ari}
A_i^r = A_i \cap A^r.
\end{equation}
for all $i \in \N_n$.
For all $\vc{x} \in A^r$, the points in $A^r$ that are adjacent to 
$\vc{x}$ constitute a subset of the \emph{neighbors} of $\vc{x}$,
\begin{equation}\label{e:neighb}
N(\vc{x}) := \set{\vc{x}+w_r\vc{v} \mid \vc{v} \in \mathcal{V}}.
\end{equation}

\begin{lemma}\label{l:Nsymmetric}
Let $A^r$ be the set of points included in the $r$-th discretization of $A$, and 
assume the
definition of $N$ given in \eref{e:neighb}.
For all $\vc{x},\,\vc{x}_0 \in A^r$,
if $\vc{x} \in N(\vc{x}_0)$, then $\vc{x}_0 \in N(\vc{x})$.
\end{lemma}
\begin{proof}
This follows from \eref{e:neighb} and the adjacency vectors established in 
\dref{d:adjV}:
for all $k \in \N_d$, $e_k \in \mathcal{V} \iff -e_k \in \mathcal{V}$.
\end{proof}

We now formalize our idea of the $r$-th interior and boundary point sets used 
in our discretization.
For all $i \in \N_n$, define the $r$-th iteration \emph{interior point set}
associated with $A_i$ as
\begin{equation}\label{Ai-int}
\mathring{A}_i^r := \set{\vc{x} \in A^r_i \mid
\forall \vc{v} \in \mathcal{V},\,\,
\vc{x} + w_r\vc{v} \in A^r_j \implies j = i}.
\end{equation}
Define the $r$-th \emph{boundary point set} as
\begin{equation}\label{e:Br}
B^r := A^r \setminus \bigcup_{i=1}^n \mathring{A}_i^r,
\end{equation}
and let
\begin{equation}\label{e:Bri}
B^r_i := B^r \cap A_i
\end{equation}
for all $i \in \N_n$.
The $r$-th \emph{evaluation region}, the subset of $A$ enclosed by the 
discretization $A^r$, is defined as
\begin{equation}\label{e:barAr}
\cl{A}^r := \set {\, \vc{x}^r \mid \vc{x} \in A^r \,},
\end{equation}
and the $r$-th \emph{boundary region}, the subset of $A$ enclosed by the 
boundary point set $B^r$, is given by
\begin{equation}\label{e:barBr}
\cl{B}^r := \set {\, \vc{x}^r \mid \vc{x} \in B^r \,}.
\end{equation}

\subsubsection{Distance bounds}\label{s:distance}
Though the discretization is fully defined, it still needs to be related back 
to the sets $A_{ij}$ and the boundary set $B$.
To do this, we first bound the distance separating $B^r$ and 
$A_{ij}$.

\begin{lemma}\label{l:dbnd}
Let $A^r$ be the set of points included in the $r$-th discretization of $A$, 
$w_r$ the width
at that discretization, and $\mathcal{V}$ the adjacency vector set satisfying 
\dref{d:adjV}.
Assume $A_{ij}$ is defined by \eref{Aij}, $\mathrm{edg}(\cdot)$ by 
\eref{e:edgeAr},
and $B^r_i$ by \eref{e:Bri}.
Suppose $A$ is convex, $c$ is a $p$-norm on $X\times Y$, and $B^r_i \neq 
\varnothing$.
For each $\vc{x}_i \in B^r_i$,
either $\vc{x}_i \in \edge(A^r)$ or 
there exists a point $\vc{x}_j = \vc{x}_i + 
w_r\vc{v}$, with $\vc{v} \in \mathcal{V}$, such that $\vc{x}_j \in B^r_j$ for 
some $j \neq i$.
Thus, if $\vc{x}_i \notin \edge(A^r)$, the distance from $\vc{x}_i$ to the set 
$A_{ij}$, as measured with respect to the ground cost $c$, is bounded above by 
$c(\vc{x}_i,\,\vc{x}_j)$.
\end{lemma}
\begin{proof}
Recall the definition of $A_i^r$ in \eref{e:Ari}.
Assume $\vc{x}_i \in B^r_i \setminus \edge(A^r)$. By the definition of $B^r$
given in \eref{e:Br}, 
there exists 
$\vc{x}_j = \vc{x}_i + w_r\vc{v} \in A_j^r \cup N(\vc{x}_0)$ for some $j \neq 
i$, where $N(\vc{x}_0)$ is the set of neighbors of $\vc{x}_0$ as defined in 
\eref{e:neighb}.
By \lref{l:Nsymmetric}, $\vc{x}_i \in N(\vc{x}_j)$, and since $\vc{x}_i \in 
A_i^r$, we have $\vc{x}_j \in B^r_j$.
Thus, $\vc{x}_i \in A$ and $\vc{x}_j \in A$, and because $A$ is convex, this 
implies
\begin{equation*}
\set{t\vc{x}_i + (1-t)\vc{x}_j\ | \ t \in [0,\,1]} \subseteq A.
\end{equation*}

Because $c$ is continuous on $X \times Y$, \lref{l:Fcont} applies.
Hence, $F$ is continuous on $A$.
Therefore, because $\vc{x}_i \in A_i$ and $\vc{x}_j \in A_j$, there exists
$t_* \in [0,\,1]$ such that $\vc{b} = t_*\vc{x}_i + (1-t_*)\vc{x}_j \in 
A_{ij}$.
Then $\vc{b} = \vc{x}_i + (1-t_*)w_r\vc{v}$, so by applying the ground cost we 
have
\begin{equation*}
\norm{\vc{b}-\vc{x}_i}_p = \norm{\vc(1-t_*)w_r\vc{v}}_p =
(1-t_*)\norm{w_r\vc{v}}_p \leq \norm{w_r\vc{v}}_p
= \norm{\vc{x}_j - \vc{x}_i}_p.
\end{equation*}
Therefore, $c(\vc{x}_i,\,\vc{b}) \leq c(\vc{x}_i,\,\vc{x}_j)$.
\end{proof}

Because we can bound the ground cost between the points in $B^r \setminus 
\edge(A^r)$ and the set 
$A_{ij}$ in terms of the ground cost between neighboring points, it is worth 
identifying a bound on that ground cost between neighbors.

\begin{lemma}\label{l:M_r}
Suppose $c = \norm{\cdot}_p$, $p \in [1,\,\infty]$ and assume a shift 
characterized problem in $\R^d$.
Let $N$ be defined as given by \eref{e:neighb} and $B^r_i$ as given by 
\eref{e:Bri}.
For the $r$-th iteration of the boundary method, given width $w_r$, there exists 
a maximum
$M_r$ such that, for
all $\vc{x}_i \in B^r_i$ and $\vc{x}_j \in B^r_j$, where $i,\,j \in \N_n$ 
and $i \neq j$, if $\vc{x}_j \in N(\vc{x}_i)$, then
$c(\vc{x}_i,\,\vc{x}_j) \leq M_r \leq w_rd^{1/p}$.
\end{lemma}

\begin{proof}
Let $\vc{x}_i$ and $\vc{x}_j$ be defined as above.
By applying the definition given in \eref{e:Br}, $\vc{x}_j = \vc{x}_i + 
w_r\vc{v}$ for some $\vc{v} \in \mathcal{V}$.
For our Cartesian grid $\mathcal{V}$,
$\norm{\vc{v}}_{p}$ achieves 
its maximum when $\vc{v} = \mathbbm{1}_d = (1,\,\ldots,\,1) \in \R^d$, so
\begin{equation*}
c(\vc{x}_i,\,\vc{x}_j) = \norm{w_r\vc{v}}_p
= w_r\norm{\vc{v}}_p
\leq w_r\norm{\mathbbm{1}_d}_{p}
= w_r d^{1/p}.
\end{equation*}
Therefore, there exists maximum $M_r$ such that, for all 
$\vc{x}_i \in B^r_i$ and $\vc{x}_j \in B^r_j$,
$c(\vc{x}_i,\,\vc{x}_j) \leq M_r \leq w_r d^{1/p}$.
\end{proof}

\subsubsection{Error bounds for shift 
differences}\label{s:aij_err}
In order to bound the error on the \Was{} distance, we merely require a finite 
bound on the errors for the individual shift differences, $a_{ij}$.
However, accurately computing the shift differences themselves is also 
important, and for that reason, we also present theorems that more 
finely bound the error on $a_{ij}$ for important ground cost functions.
Because estimates are generated using one or more computations of 
$g_{ij}(\vc{x})$, the magnitude of these errors is dependent on the 
point(s) chosen.

\begin{lemma}\label{l:alpha_def}
Let $A_{ij}$ be defined by \eref{Aij}, and $a_{ij}$ by \eref{e:aij}.
Suppose the ground cost $c$ satisfies the triangle inequality.
Let $\vc{x}  \in A$ and $i,\,j \in \N_n$ such that $i \neq j$.
The error resulting from approximating $a_{ij}$ at $\vc{x}$ is bounded above 
by
$\abs{\alpha_{ij}(\vc{x})} \leq 2c(\vc{x},\,\vc{b})$, where
\begin{equation}\label{e:cdif}
\alpha_{ij}(\vc{x}) :=
\big[ c(\vc{x},\,\vc{y}_i) - c(\vc{b},\,\vc{y}_i) \big]
 + \big[ c(\vc{b},\,\vc{y}_j) - c(\vc{x},\,\vc{y}_j) \big],
\end{equation}
and $\vc{b}$ is the point in $A_{ij}$ nearest to $\vc{x}$ with respect to the 
ground cost.
\end{lemma}
\begin{proof}
Assume $\vc{b} \in A_{ij}$ is the closest point in $A_{ij}$ to $\vc{x}$.
Then
\begin{equation*}
c(\vc{b},\,\vc{y}_i) -  c(\vc{b},\,\vc{y}_j) = g_{ij}(\vc{b}) = a_{ij}.
\end{equation*}
For every $\vc{x} \in A$, there exists some $\alpha_{ij}(\vc{x}) \in \R$ such 
that
\begin{equation*}
c(\vc{x},\,\vc{y}_i) -  c(\vc{x},\,\vc{y}_j) = a_{ij} + \alpha_{ij}.
\end{equation*}
By rearrangement and substitution, we have
\begin{align*}
\alpha_{ij}(\vc{x}) &= -a_{ij} + c(\vc{x},\,\vc{y}_i) -  c(\vc{x},\,\vc{y}_j) \\
&= -\big[ c(\vc{b},\,\vc{y}_i) -  c(\vc{b},\,\vc{y}_j) \big]
+ c(\vc{x},\,\vc{y}_i) -  c(\vc{x},\,\vc{y}_j) \\
&= \big[ c(\vc{x},\,\vc{y}_i) - c(\vc{b},\,\vc{y}_i) \big]
+ \big[ c(\vc{b},\,\vc{y}_j) - c(\vc{x},\,\vc{y}_j) \big].
\end{align*}
Since $c$ satisfies the triangle inequality,
\begin{equation*}
c(\vc{x},\,\vc{y}_i) - c(\vc{b},\,\vc{y}_i)
\leq c(\vc{x},\,\vc{b}) + c(\vc{b},\,\vc{y}_i) - c(\vc{b},\,\vc{y}_i)
= c(\vc{x},\,\vc{b}).
\end{equation*}
Thus,
\begin{equation*}
\abs{ c(\vc{x},\,\vc{y}_i) - c(\vc{b},\,\vc{y}_i) }
\leq \abs{ c(\vc{x},\,\vc{b}) } = c(\vc{x},\,\vc{b}),
\end{equation*}
and, by a similar line of reasoning,
$\abs{ c(\vc{b},\,\vc{y}_j) - c(\vc{x},\,\vc{y}_j) }
\leq c(\vc{x},\,\vc{b})$.
Therefore,
\begin{equation*}
\abs{\alpha_{ij}(\vc{x})} \leq
\abs{ c(\vc{x},\,\vc{y}_i) - c(\vc{b},\,\vc{y}_i) }
+ \abs{ c(\vc{b},\,\vc{y}_j) - c(\vc{x},\,\vc{y}_j) } \leq 2c(\vc{x},\,\vc{b}).
\end{equation*}
\end{proof}

In addition to bounding the error for individual points $\vc{x}$, we can also 
establish meaningful global bounds.

\begin{lemma}\label{l:tri_2del}
Assume a shift characterized problem in $\R^d$, where $c$ is a $p$-norm, $p \in 
[1,\,\infty]$.
Let $w_r$ be the width of the discretization during iteration $r$, and let 
$\vc{x}^r$ indicate the box of
width $w_r$ centered at the point $\vc{x}$.
Let $B$ be defined by \eref{bdryset}, $N$ by \eref{e:neighb}, and $B^r_i$ by 
\eref{e:Bri}.
Taking $\alpha_{ij}$ as defined by \eref{e:cdif} and $\cl{B}^r$ as given by 
\eref{e:barBr},
let $\alpha_{\max}$ be the maximum value of 
$\abs{\alpha_{ij}(\vc{x})}$ over all $\vc{x} \in \cl{B}^r$ and $i,\,j \in 
\N_n$, such that:
(1) $i \neq j$,
(2) $\vc{x} \in \vc{x}_i^r$ for some $\vc{x}_i \in B^r_i$, and
(3) $B^r_j \cap N(\vc{x}_i) \neq \varnothing$.
Then
$\alpha_{\max} \leq 4w_rd^{1/p}$
and
for all $\vc{x} \in \cl{B}^r$,
$\norm{\vc{x}-B}_p \leq 2w_rd^{1/p}$.
\end{lemma}
\begin{proof}
Suppose $\vc{x} \in \cl{B}^r$.
By the definition of our grid, $\vc{x}$ is contained in some $G = \Conv(S)$, 
where $S$ is a finite set of neighboring grid points.
For each $\vc{x}_a,\,\vc{x}_b \in S$, $\vc{x}_b \in N(\vc{x}_a)$, and hence
$\vc{x}_b = \vc{x}_a + w_r\vc{v}$ for some $\vc{v} \in \mathcal{V}$,
the adjacency vectors described in \dref{d:adjV}.
Since $\norm{v}_p \leq d^{1/p}$,
$c(\vc{x}_a,\,\vc{x}_b) \leq w_rd^{1/p}$.
Because $\vc{x}_a$ and $\vc{x}_b$ were arbitrarily chosen, this is true of 
every pair of vertices of $G$.
By the definition of $G$, $\vc{x}$ can be written as a convex combination of 
the points in $S$.
Therefore, for any fixed $\vc{x}_0 \in S$, $c(\vc{x},\,\vc{x}_0) \leq 
w_rd^{1/p}$.

Recall the definition of $B^r$ given in \eref{e:Br}.
Because $\vc{x} \in \cl{B}^r$, $\Conv(S) \cap B^r$ must be nonempty.
Assume without loss of generality that $\vc{x}_0 = \vc{x}_i \in B^r_i$ for some 
$i \in \N_n$.
Because $c$ satisfies the triangle inequality, \lref{l:alpha_def} applies.
Hence, there must exist a point $\vc{x}_j \in B^r_j$, a 
neighbor of $\vc{x}_i$, with $j \neq i$, and a point $\vc{b} \in A_{ij}$ such 
that
$c(\vc{x}_i,\,\vc{b}) \leq c(\vc{x}_i,\,\vc{x}_j) \leq w_rd^{1/p}$.
Applying the triangle inequality, we find that
\begin{equation*}
c(\vc{x},\,\vc{b}) \leq
c(\vc{x},\,\vc{x}_i) + c(\vc{x}_i,\,\vc{b})
\leq 2w_rd^{1/p}.
\end{equation*}
Therefore,
$\norm{\vc{x}-B}_p \leq 2w_rd^{1/p}$
and
$\alpha_{\max} \leq 4w_rd^{1/p}$.
\end{proof}

\subsubsection{Error bound for ground costs}\label{s:err_c}
In preparation for bounding the \Was{} distance error, we now bound the error 
on the ground cost $c$ with respect to individual points in $\cl{B}^r$.

\begin{lemma}\label{l:gamma_max}
Given a shift characterized transport problem in $\R^d$, with ground cost $c = 
\norm{\cdot}_p$, $p \in [1,\,\infty]$.
Assume $w_r$ is the width of the discretization at the $r$-th iteration and
let $\tilde{\pi}^*$ be an approximated transport plan with associated transport 
map 
$\widetilde{T}$, obtained using the boundary method with discretization $w_r$.
Suppose $\pi^*$ is an optimal transport plan with associated map $T$, and let 
$\vc{x}$ in $A$ such that $T(\vc{x}) = \vc{y}_i$, but 
$\widetilde{T}(\vc{x}) = \vc{y}_j$.
Then the error in the ground cost at the point $\vc{x}$ is equal to 
$\abs{g_{ij}(\vc{x})}$,
where $g_{ij}$ is defined as given in \eref{e:star}.
Furthermore, there exists $\gamma_{\max}$ such that, for all such $\vc{x} \in A$ 
with
$T(\vc{x}) = \vc{y}_i$ and $\widetilde{T}(\vc{x}) = \vc{y}_j$ for some $i \neq 
j$,
\begin{equation}\label{e:gamma_max}
\abs{g_{ij}(\vc{x})} \leq \gamma_{\max} \leq
\max_{\substack{1 \leq i < n\\i < j \leq n\\A_{ij} \neq 
\varnothing}}\,\abs{a_{i}-a_j}
+ 4w_rd^{1/p} < \infty.
\end{equation}
\end{lemma}
\begin{proof}
Let $\vc{x} \in A$ such that $T(\vc{x}) = \vc{y}_i$, but 
$\widetilde{T}(\vc{x}) = \vc{y}_j$.
Then the error in the ground cost at $\vc{x}$ equals
\begin{equation*}
\abs{ c(\vc{x},\,\vc{y}_i) - c(\vc{x},\,\vc{y}_j) }
= \abs{ g_{ij}(\vc{x}) }.
\end{equation*}
As a consequence of \lref{l:tri_2del}:
\begin{equation*}
\abs{g_{ij}(\vc{x})} =
\abs{c(\vc{x},\,\vc{y}_i) -  c(\vc{x},\,\vc{y}_j)}
\leq \abs{a_{ij}} + \abs{\alpha_{ij}(\vc{x})}
\leq 
\max_{\substack{1 \leq i < n\\i < j \leq n}}\,\abs{a_{ij}}
+ 4w_rd^{1/p} < \infty.
\end{equation*}
The result is independent of $\vc{x}$, $i$, and $j$,
and therefore
there must exist some
$\gamma_{\max} \leq \max_{\substack{1 \leq i < n\\i < j \leq n}}\,\abs{a_{ij}}
+ 4w_rd^{1/p} < \infty$.
\end{proof}

\subsubsection{Volume and containment for the boundary 
region}\label{s:Br_vol}
As shown in \sref{s:err_c}, the ground cost error for individual points is 
finitely bounded over a wide range of admissible ground cost functions.
By definition, the measure $\mu$ is bounded.
We propose to identify the largest possible region in which 
the ground cost error can be non-zero, and to show that the area of that region 
goes to zero as $r$ goes to infinity.  With this,
we will show that the boundary method 
converges with respect to the \Was{} distance.

In \eref{e:barBr}, we defined a region $\cl{B}^r$ based on the point set $B^r$.
For this, we need to know that we can choose an initial width $w_1$ such 
that, for all iterations $r$, $B \subset \cl{B}^r$.
\tref{t:goodw1} guarantees that such a width exists, and gives a sense of
the relevant features driving the choice of $w_1$.
For details about the numerical considerations involved, see
\rref{r:goodw1}.

\begin{theorem}\label{t:goodw1}
Assume $c$ is a $p$-norm, $p \in [1,\,\infty]$, and $A = [0,\,l]^d$.
There exists an initial width $w_1$ such that, for all 
$w_r$ such that $w_r \leq w_1$, $\vc{x} \in \mathring{A}_i^r$,
as defined by \eref{Ai-int}, implies 
the box of width $w_r$ centered at $\vc{x}$, given by $\vc{x}^r$,
satisfies $\vc{x}^r \subseteq \mathring{A}_i$, where $\mathring{A}_i$ is the 
strict 
interior of $A_i$, as defined by \eref{e:oAi}.
\end{theorem}
\begin{proof}
Recall the definition of $A_i$ given by \eref{Aidef},
$A_{ij}$ given by \eref{Aij}, $B$ given by \eref{bdryset},
and $g_{ij}$ given by \eref{e:star}.
Let
$\mathcal{B}(\vc{x},\,s)$ indicate the open ball of radius $s$ (with respect to 
the $p$-norm $c$) centered at $\vc{x}$ and
$\mathcal{C}(\vc{x},\,s)$ indicate the $d$-dimensional cube with side
length $s$ (with respect to the Euclidean distance) centered at $\vc{x}$.
Because $c$ is a $p$-norm, for each $i \in \N_n$,
$\vc{y}_i \in \mathring{A}_i$, and therefore there exists
$\delta_i > 0$ such that
$\mathcal{B}(\vc{y}_i,\,\delta_i) \subseteq \mathring{A}_i$.
Thus, there exist $\varepsilon > 0$ and $\delta \geq \varepsilon$, such that,
for any $i \in \N_n$,
$c(\vc{x},\,\vc{y}_i) < \delta$ implies
$\mathcal{C}(\vc{x},\,4s) \subseteq \mathring{A}_i$ for all
$s \leq \varepsilon$.

Let
\begin{equation*}
S := A \setminus \bigcup_{i \in \N_n}\, \mathcal{B}(\vc{y}_i,\,\delta).
\end{equation*}
Because $S$ is a closed set minus a finite number of open sets, $S$ is closed.

If all $A_{ij}$ are hyperplanes on $S$, the claim is self-evident
for all $w_1 \leq \varepsilon$, so assume instead that 
at least one $A_{ij}$ is not a hyperplane on $S$.
Let
\begin{equation*}
G_1 := \sup_{\substack{\vc{x} \in S\\i,\,j \in \N_n\\i \neq j}}
\, \abs{\nabla g_{ij}(\vc{x})}
\quad\quad\text{ and }\quad\quad
G_2 := \sup_{\substack{\vc{x} \in S\\i,\,j \in \N_n\\i \neq j}}
\, \abs{\nabla^2 g_{ij}(\vc{x})}.
\end{equation*}
There exists a maximum directional magnitude with respect to the
Euclidean distance,
\begin{equation*}
M = \sup_{\substack{\vc{x} \in S\\\vc{u} \in \R^d\\i \in \N_n}}
\, \abs{x_\vc{u}-y^i_\vc{u}} \leq l\sqrt{d}<\infty,
\end{equation*}
where $\abs{x_\vc{u}-y^i_\vc{u}}$ is the magnitude of the vector
$\vc{x} - \vc{y}_i$ projected parallel to the direction of $\vc{u}$.
Because $c \in C^2(S)$, $G_1$ and $G_2$ are well-defined.
For any $\vc{x} \in S$ and any unit direction vector $\vc{u} \in \R^d$,
\begin{align*}
\abs{\nabla_\vc{u}\,g_{ij}(\vc{x})}
&= \abs*{
\frac{(x_{\vc{u}}-y^i_{\vc{u}})\abs{x_{\vc{u}}-y^i_{\vc{u}}}^{p-2}}
{(c(\vc{x},\,\vc{y}_i))^{p-1}}
-
\frac{(x_{\vc{u}}-y^j_{\vc{u}})\abs{x_{\vc{u}}-y^j_{\vc{u}}}^{p-2}}
{(c(\vc{x},\,\vc{y}_j))^{p-1}}
}
\\
&\leq 2\frac{M^{p-1}}{\delta^{p-1}} < \infty.
\end{align*}
and
\begin{align*}
\abs{\nabla^2_\vc{u}\,g_{ij}(\vc{x})}
&=
(p-1)\left|
\frac{\abs{x_{\vc{u}}-y^i_{\vc{u}}}^{p-2}}
{(c(\vc{x},\,\vc{y}_i))^{p-1}}
-
\frac{\abs{x_{\vc{u}}-y^i_{\vc{u}}}^{2p-2}}
{(c(\vc{x},\,\vc{y}_i))^{2p-1}}
\right.
\\
&\phantom{=(p-1)|}\quad\quad
\left.
-
\frac{\abs{x_{\vc{u}}-y^j_{\vc{u}}}^{p-2}}
{(c(\vc{x},\,\vc{y}_j))^{p-1}}
+
\frac{\abs{x_{\vc{u}}-y^j_{\vc{u}}}^{2p-2}}
{(c(\vc{x},\,\vc{y}_j))^{2p-1}}
\right|
\\
&\leq
2(p-1)\frac{M^{p-2}}{\delta^{p-1}}\left[
1 + \frac{M^p}{\delta^p}
\right] < \infty.
\end{align*}
Hence,
$G_1 < \infty$
and $G_2 < \infty$.

Assume the Gaussian curvature of the set $A_{ij}$ at a point
$\vc{x} \in A_{ij}$ is given by the function $K_{ij}(\vc{x})$,
and when $K_{ij}(\vc{x})\neq 0$
the radius of curvature is given by
$R_{ij}(\vc{x}) = \abs{K_{ij}(\vc{x})}^{-1}$.
Because $K_{ij}(\vc{x})$ is defined as a product of first and second
directional derivatives of $g_{ij}$, and those derivatives are
bounded, there exists a maximum absolute Gaussian curvature for
$B$ on $S$, given by
\begin{equation*}
K := \sup_{\substack{i,\,j \in \N_n\\i\neq j\\\vc{x} \in A_{ij} \cap S}}
\,\abs{K_{ij}(\vc{x})} < \infty.
\end{equation*}
Because at least one $A_{ij}$ is not a hyperplane, $K > 0$.
Because $K < \infty$, for any $i,\,j \in \N_n$, $i \neq j$ and
any $\vc{x} \in A_{ij} \cap S$, the radius of curvature is
bounded below: $R_{ij}(\vc{x}) \geq K^{-1} > 0$.

Let $\tilde{\varepsilon} = \dfrac{2}{K\sqrt{d}}$.
Suppose $s \leq \min\set{\varepsilon,\,\tilde{\varepsilon}}$, $\vc{x}_0 \in 
\mathring{A}_i^r$
for some $i \in \N_n$, and
that
$\mu(A_j \cap \mathcal{C}(\vc{x}_0,\,s)) > 0$ for some
$j \in \N_n$, $j \neq i$.

The set $\mathcal{C}(\vc{x}_0,\,2s)$ is the cube surrounding $\vc{x}_0$
and its neighbors.
Because $\vc{x}_0 \in \mathring{A}_i^r$, $A_{ij}$
cannot be a hyperplane in $\mathcal{C}(\vc{x}_0,\,2s)$, and so $R_{ij}$
is well-defined on $\mathcal{C}(\vc{x}_0,\,2s)$.
If there exist $k \in \N_n$ and
$\vc{x} \in \mathcal{C}(\vc{x}_0,\,2s)$ such that
$c(\vc{x},\,\vc{y}_k) < \delta$, then
$\mathcal{C}(\vc{x}_0,\,2s)
\subseteq \mathcal{C}(\vc{x},\,4\varepsilon) \subseteq \mathring{A}_k$,
and since $\vc{x}_0 \in \mathring{A}_i^r$, this implies $k = i$ and
$\mathcal{C}(\vc{x}_0,\,2s) \subseteq \mathring{A}_i$.
This implies $\mathcal{C}(\vc{x}_0,\,s) \cap A_{j} = \varnothing$,
which contradicts the claim that 
$\mu(A_j \cap \mathcal{C}(\vc{x}_0,\,s)) > 0$.
Therefore, 
$c(\vc{x},\,\vc{y}_k) \geq \delta$ for all $k \in \N_n$
and $\vc{x} \in \mathcal{C}(\vc{x}_0,2s)$.
This implies $\mathcal{C}(\vc{x}_0,\,2s) \subseteq S$.
Hence, the intersection of the boundary $A_{ij}$ with the cube
$\mathcal{C}(\vc{x}_0,\,2s)$
must have a point with minimum radius of curvature,
\begin{equation*}
\vc{x}_m := \arginf_{\vc{x} \in A_{ij} \cap \mathcal{C}(\vc{x}_0,\,2s)}
\,R_{ij}(\vc{x}),
\end{equation*}
and since $\vc{x}_m \in S$, it must be the case that
$R_{ij}(\vc{x}_m) \geq \sfrac{1}{K}$.

Because $\mu(A_j \cap \mathcal{C}(\vc{x}_0,\,s)) > 0$, but
$\vc{x}_0 \notin A_j$,
there must exist
$\vc{x}_c \in A_{ij} \cap \mathcal{C}(\vc{x}_0,\,s)$.
Hence, within the cube $\mathcal{C}(\vc{x}_0,\,2s)$,
there must be a $d$-dimensional
sphere (or partial sphere) of radius $R_{ij}(\vc{x}_m)$,
not in $A_i$,
whose boundary intersects $\vc{x}_c$
(``partial'' because the sphere may be cut off
by one or more of the planes bounding the cube).
Call this (partial) sphere $\widetilde{\mathcal{S}}$.

Since $\vc{x}_0 \in \mathring{A}_i^r$, it must be the case
that $\widetilde{\mathcal{S}} \cap \set{N(\vc{x}_0) \cup \set{\vc{x}_0}}
= \varnothing$, where $N$ is the set of neighbors defined by \eref{e:neighb}.
Because $\vc{x}_c \in \mathcal{C}(\vc{x}_0,\,s)$,
and the maximum distance between grid points in
$\mathcal{C}(\vc{x}_0,\,2s)$ is $s\sqrt{d}$,
this requires $R_{ij}(\vc{x}_m) < s\sqrt{d}/2$.
Hence, there exists $\vc{x}_m \in A_{ij} \cap S$ such that
\begin{equation*}
R_{ij}(\vc{x}_m) < \frac{s\sqrt{d}}{2} \leq \frac{1}{K}.
\end{equation*}
This contradicts $R_{ij}(\vc{x}_m) \geq \sfrac{1}{K}$.
Thus, it must be the case that for all $j \in \N_n$, $j \neq i$ implies
$\mu(\mathcal{C}(\vc{x},s) \cap A_{j}) = 0$, and therefore
$C(\vc{x},\,s) \subseteq \mathring{A}_i$.

Setting $w_1 \leq \min\set{\varepsilon,\,\tilde{\varepsilon}}$
completes the proof.
\end{proof}

\begin{rem}\label{r:goodw1}
By the boundary method's very nature, any initial configuration must enclose the 
boundary
in a way that allows it to be distinguished from the region interiors.
This is the meaning behind the width $w_1$ considered in \tref{t:goodw1}.
In principle, $w_1$ may need to be quite small.
In practice, the potential problems associated with an overly-large $w_1$ 
rarely 
occur, and they are obvious when they do.
We did occasionally observe an issue when the initial $w_1$ was so large that a 
region $\vc{x}^r$ could contain an entire $A_i$
(in other words, when $w_1$ was significantly
larger than the $\delta$ described in
\tref{t:goodw1}).
In those cases, the affected region's $a_i$ was such that
$c(\vc{y}_i,\,\vc{y}_j) = a_i - a_j$ for some $j \neq i$, and
the resulting transport plan had $\mu(A_i) = 0$.
Hence, the set $\set{a_i}_{i=1}^n$ and reconstructed regions $\set{A_i}_{i=1}^n$ 
directly revealed
when such an error had occurred.

Also, because of the nature of the iterative method, a poor choice of $w_1$
quickly becomes obvious in the boundary region
itself.
Simply put, the loss of any portion of the boundary set $B$ destabilizes
the method.
Losing part of $B$ creates a visible gap in the ``wall'' between two
regions, and the gap increases in size with each successive iteration.
This behavior seems to occur whenever some part of $B$ is lost, no matter
what the cause.
For example, in our tests we observed that discarding an edge box that 
intersects $B$ results in the same progressive damage to the boundary set.
Not surprisingly, this also ``stalls'' the convergence of the Wasserstein
distance in ways that are obvious during computation.

In our numerical tests, we used $w_1 \leq \sfrac{1}{50n}$ and obtained
consistently reliable results.
\end{rem}

Next, we show that a well-chosen initial width and grid 
arrangement can guarantee that, for every iteration $r$, each point in
$A^r \setminus B^r$ corresponds to a box in the interior of some
region $A_i$.

\begin{theorem}\label{t:BinBr}
Assume $c$ is a $p$-norm, $p \in [1,\,\infty]$, and $A = [0,\,l]^d$.
Suppose the first iteration width $w_1$ is chosen as described in 
\tref{t:goodw1}.
Fix $r$, let $w_r \leq w_1$, and let $A^r$ be the boundary set remaining at the
$r$-th iteration.
Given the definition of $B$ from \eref{bdryset}, $\cl{A}^r$ from \eref{e:barAr},
$\cl{B}^r$ from \eref{e:barBr},
if $B \subseteq \cl{A}^r$, then $B \subseteq \cl{B}^r$, and hence $B \subseteq 
A^{r+1}$.
\end{theorem}
\begin{proof}
We will show the conclusions by proving that $\vc{x}_0 \notin \cl{B}^r$ implies 
$\vc{x}_0 \notin B$.

Suppose $\vc{x}_0 \notin \cl{B}^r$.
If $\vc{x}_0 \notin \cl{A}^r$, then $\vc{x}_0 \notin B$, since by 
assumption, $B \subseteq \cl{A}^r$.
Thus, we assume instead that $\vc{x}_0 \in \cl{A}^r \setminus \cl{B}^r$.

Because $\vc{x}_0 \in \cl{A}^r$, we know $\vc{x}_0 \in \vc{x}^r$, the box of
radius $w_r$ centered around some $\vc{x} \in A^r$.
We have $\vc{x} \in A_i$ for some $i \in \N_n$, where $A_i$ is defined as given 
in
\eref{Aidef}, and so by the definition of $A_i^r$ from \eref{e:Ari}, $\vc{x} \in 
A_i^r$.
However, $\vc{x}_0 \notin \cl{B}^r$ implies $\vc{x}^r \not\subseteq \cl{B}^r$, 
so from the definition of $B^r$ given in \eref{e:Br}, $\vc{x} \notin B^r$.
Because, $\vc{x} \in A_i^r \setminus B^r = \mathring{A}_i^r$ (see
\eref{Ai-int}), by 
\tref{t:goodw1}, $\vc{x}^r \subseteq \mathring{A}_i$. Hence, 
$\vc{x}_0 \in \mathring{A}_i$. Therefore, by \eref{e:oAi}, $\vc{x}_0 
\notin B$.
\end{proof}

Now that we have ensured $B \subseteq \cl{B}^r$, we aim to construct a region
of controlled volume enclosing $\cl{B}^r$: $\cl{B}^r \subseteq 
\clBrp$.
Then we show that, as $r \to \infty$, the volume 
of $\clBrp$ in $\R^d$ goes to zero with respect to the Lebesgue measure.
This will allow us to put a convenient upper bound on the volume of $\cl{B}^r$ 
in terms of the width $w_r$.
Because $\clBrp$ exists solely in $A$, and not on the product space, we can 
once again rely on the Euclidean distance in $\R^d$.

\begin{lemma}\label{l:subsetB_r}
Assume a shift characterized transport problem in $\R^d$, with 
$c=\norm{\cdot}_p$, $p \in [1,\,\infty]$.
Suppose $w_r$ is the width used for the $r$-th iteration, and assume $B$ is 
defined as given in \eref{bdryset},
$\cl{B}^r$ as given in \eref{e:barBr}.
Let the region $\clBrp \subseteq A$ be defined as
\begin{equation}\label{e:barBrp}
\clBrp := \set{ \vc{x} \in A \mid \norm{\vc{x}-B}_2
 \leq 2w_r\sqrt{d} }.
\end{equation}
For all $r$, $\cl{B}^r \subseteq \clBrp$.
\end{lemma}
\begin{proof}
By definition, $\cl{B}^r \subseteq A$.
Suppose $\vc{x} \in \cl{B}^r$.
Because we are applying the Euclidean norm, \lref{l:tri_2del} implies
that $\norm{\vc{x}-B}_2 \leq 2w_r\sqrt{d}$, and since $\vc{x} \in A$,
$\vc{x} \in \clBrp$.
\end{proof}

\begin{theorem}\label{t:clBprVolBnd}
Assume $c$ is a $p$-norm, $p \in [1,\,\infty]$.
Let $w_r$ be the width of the discretization applied during the $r$-th 
iteration.
Given the definition of $B$ in \eref{bdryset} and $\cl{B}^r$ in \eref{e:barBr},
if $\mu(B)=0$ and
there exists some constant $\tilde{L}$ such that $\abs{B} 
= \tilde{L} < \infty$ with respect to the $\R^{d-1}$ Lebesgue 
measure, then there exists some $L < \infty$, such that
$\abs*{\cl{B}^r} \leq w_r^dL$ with respect to the $\R^d$ Lebesgue 
measure.
\end{theorem}
\begin{proof}
Recall the definition of $\clBrp$ given in \eref{e:barBrp}.
We know
$ \int_{\clBrp{}} \,d\vc{x} = \int_A \chi\left[\clBrp{}\right]
\!\!(\vc{x})\,d\vc{x}$.
Let $\mathcal{B}(\vc{x},\,\rho)$ be the closed ball of radius
$\rho$ centered at $\vc{x}$, and defined with respect to the Euclidean distance.
Write
\begin{align*}
\int_A \chi\left[\clBrp{}\right]\!\!(\vc{x}) \,d\vc{x}
&= \int_A \chi
\left[\set*{\vc{x} \in A \mid \norm{\vc{x}-B}_2 \leq 2w_r\sqrt{d}}\right]
\!\!(\vc{x}) \,d\vc{x}
\\ &= \int_A \chi
\left[\set*{\vc{x} \in A \mid \vc{x} \in \mathcal{B}(\vc{z},\,2w_r\sqrt{d})
\text{ for some }
\vc{z} \in B}\right]
\!\!(\vc{x}) \,d\vc{x},
\\ &\leq \int_A \chi[B](\vc{z})\, \left( \int_A
\chi
\left[\mathcal{B}(\vc{z},\,2w_r\sqrt{d})\right]
\!\!(\vc{x}) \,d\vc{x}\right) \,d\vc{z}.
\end{align*}

For all fixed $\vc{x}$,
\begin{align*}
\int_A
\chi
\left[\mathcal{B}(\vc{x},\,2w_r\sqrt{d})\right]
\!\!(\vc{x}) \,d\vc{x}
\leq \mathrm{Vol}_d \left(2w_r\sqrt{d}\right),
\end{align*}
where $\mathrm{Vol}_d(\rho)$ is the volume of the $d$-dimensional sphere of 
radius
$\rho$, defined with respect to the Euclidean distance.
By using the $\mathrm{\Gamma}$ function, this volume can be written as
\begin{align*}
\mathrm{Vol}_d \left(2w_r\sqrt{d}\right) :=
&\frac{\pi^{d/2}}{\frac{d}{2}\mathrm{\Gamma}\left(\frac{d}{2}\right)}
\left(2w_r\sqrt{d}\right)^d
\\ =
&\begingroup
\begin{cases}
\frac{\pi^{d/2}}{(d/2)!}\left(2w_r\sqrt{d}\right)^d
& \text{ if } d = 2k \text{ for some } k \in \Z
\\
\frac{2k!(4\pi)^{k}}{d!}\left(2w_r\sqrt{d}\right)^d
& \text{ if } d = 2k+1 \text{ for some } k \in \Z .
\end{cases}
\endgroup
\end{align*}
Because the volume is independent of the point $\vc{x} \in A$, we therefore have
\begin{align*}
\int_{\clBrp{}} \,d\vc{x}
&\leq \int_A \chi[B](\vc{z})\, \int_A
\chi
\left[\mathcal{B}(\vc{z},\,2w_r\sqrt{d})\right]
\!\!(\vc{x}) \,d\vc{x} \,d\vc{z},
\\&\leq
\int_A \chi[B](\vc{z})\, \mathrm{Vol}_d \left(2w_r\sqrt{d}\right) \,d\vc{z}
= \mathrm{Vol}_d \left(2w_r\sqrt{d}\right) \int_B \,d\vc{z}
= w_r^dL,
\end{align*}
where
\begin{equation*}
L :=
\begingroup
\begin{cases}
\tilde{L}
\frac{\pi^{d/2}}{(d/2)!}
\left(2\sqrt{d}\right)^d
& \text{ if } d = 2k \text{ for some } k \in \Z
\\
\tilde{L}
\frac{2k!(4\pi)^{k}}{d!}
\left(2\sqrt{d}\right)^d
& \text{ if } d = 2k+1 \text{ for some } k \in \Z.
\end{cases}
\endgroup
\end{equation*}
Let $\vc{x} \in \cl{B}^r$. By applying
\lref{l:tri_2del} with $c$ the Euclidean distance, we know that for all $\vc{x} 
\in \cl{B}^r$,
$\norm{\vc{x}-B}_2 \leq 2w_r\sqrt{d}$, which implies
$\vc{x} \in \clBrp{}$.
Thus, $\cl{B}^r \subseteq \clBrp{}$, which implies
$\abs{\cl{B}^r} \leq \abs{\clBrp{}} \leq w_r^dL$.
\end{proof}

\begin{rem}
The interplay between $B$, $B^r$, $\cl{B}^r$, and $\clBrp$ is nontrivial. 
\fref{f:theBrs} helps to visualize it properly.
In \fsubref{f:theBrs}{f:Br}, we show placement of some boundary set $B^r$.
It is crucial that the subgrid created by $B^r$ completely surrounds $B$, 
because that is the only way to ensure that $B \subseteq \cl{B}^r$.
One can see in this image how a (very degenerate) choice of $c$, coupled with 
the right arrangement of $\vc{y}_i$'s, might allow a small and sharply curved 
boundary set to slip unnoticed between points.

As \fsubref{f:theBrs}{f:barBr} illustrates, each point in $B^r$ appears as the 
center of its corresponding box, and the boxes completely cover the boundary 
set.

The region $\clBrp$ is deliberately constructed to entirely cover all the boxes 
in $\cl{B}^r$. As \fsubref{f:theBrs}{f:barBrPLUS} shows, its volume can be 
significantly 
larger than that of the boxes it contains. However, the worst-case 
``thickness'' given 
to $\clBrp$ ensures that it will always enclose both $B$ and $\cl{B}^r$.

\begin{figure}[htpb]
  \centering
  \subfloat[$B^r$ surrounding $B$]{%
    \label{f:Br}
    \centering
    \resizebox{0.3\textwidth}{!}{%
    \begin{overpic}[width=\textwidth]{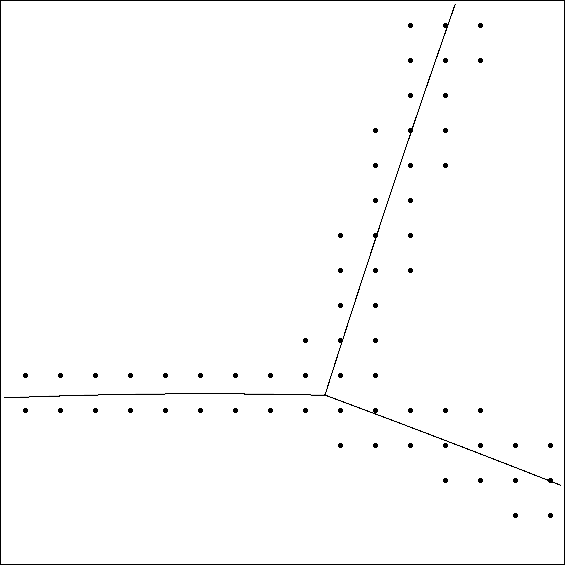}
    \end{overpic}}
  }%
  \subfloat[boxes $\cl{B}^r$ covering $B$]{%
    \label{f:barBr}
    \centering
    \resizebox{0.3\textwidth}{!}{%
    \begin{overpic}[width=\textwidth]{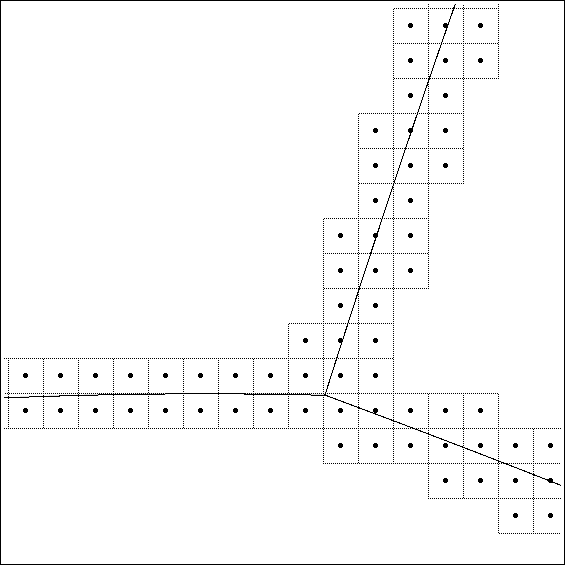}
    \end{overpic}}
  }%
  \subfloat[region $\clBrp$ covering $\cl{B}^r$]{%
    \label{f:barBrPLUS}
    \centering
    \resizebox{0.3\textwidth}{!}{%
    \begin{overpic}[width=\textwidth]{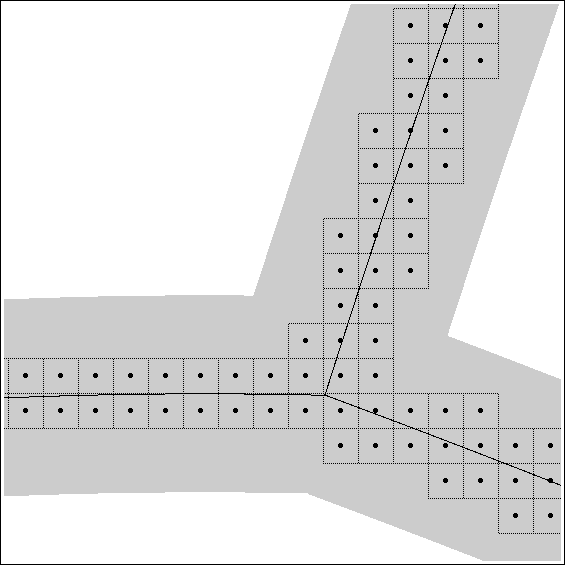}
    \end{overpic}}
  }%
\caption{Detail from problem in \fsubref{f:meas_comp}{f:uni}: Boundary 
set interactions near $A_0 \cap A_2 \cap A_3$}\label{f:theBrs}
\end{figure}
\end{rem}

\subsubsection{The \Was{} distance error}\label{s:Wass_err}
\begin{theorem}\label{t:WassErr}
Assume $\mu$ is absolutely continuous and let $P^*$ be the \Was{} distance.
Let $w_r$ be the width of the $r$-th iteration of the boundary method.
Given the definition of $B$ in \eref{bdryset} and $\cl{B}^r$ in \eref{e:barBr},
suppose $B \subseteq \cl{B}^r$, 
and that there exists some $L$ such that $\abs{\cl{B}^r} = w_r^dL < \infty$
with respect to the $d$-dimensional Lebesgue measure.
If $\gamma_{\max} < \infty$ is the maximum error of the ground cost in the set
$\cl{B}^r$, and $\widetilde{P}^*$ is the \Was{} distance approximation
obtained with the boundary method, then 
the value of $\mu$ on $A$ is bounded by some $M < \infty$ and
\begin{equation}
\abs*{\widetilde{P}^{*} - P^*} \leq
w_r^dLM\gamma_{\max} ,
\end{equation}
where the bound equals
the maximum possible volume of $\cl{B}^r$ multiplied by the maximum value of 
$\mu$ and the maximum error of the ground cost.
\end{theorem}
\begin{proof}
If $\vc{x} \in A \setminus \cl{B}^r$, then $\vc{x}$ has been identified as 
being in the interior of $A_i$ for some $i \in \N_n$.
Thus, the cost error associated with the points outside $\cl{B}^r$ is zero.

Suppose instead that $\vc{x} \in \cl{B}^r$.
By definition, the absolute value of the difference between the correct and 
approximated ground costs at $\vc{x}$ is less than or equal to $\gamma_{\max}$.
\cref{c:absCont} requires $\mu$ to be absolutely continuous, so there exists
$M$ such that, for all $\vc{x} \in X$, $0\leq \mu(\vc{x}) \leq M < \infty$.

Therefore, the error on the \Was{} distance is bounded above by
\begin{equation*}\begin{split}
\abs*{\widetilde{P}^{*} - P^*}
& \leq \int_{\cl{B}^r} \gamma_{\max} \, d\mu (\vc{x})
= \int_{\cl{B}^r} \mu(\vc{x})\gamma_{\max} \, d\vc{x} \\
& \leq \int_{\cl{B}^r} (M)\gamma_{\max} \, d\vc{x}
= \abs*{\cl{B}^r}M\gamma_{\max}
\leq (w_r^dL)M\gamma_{\max}.
\end{split}\end{equation*}
\end{proof}

\begin{rem}\label{RemWassErr}
The bounds in Theorems~\ref{t:clBprVolBnd} and \ref{t:WassErr} indicate that the
volume of the boundary set and the error of the computed \Was{} distance
decrease according to the dimension of the space.
Thus, we should expect our numerical tests to show a quadratic (in $\R^2$)
or cubic (in $\R^3$) decrease of the \Was{} distance error.
These decreases are clearly observed in practice, see \sref{s:results}.
\end{rem}

\section{Numerical results}\label{s:results}
\subsection{Test conditions}\label{s:numerics}
As mentioned in \rref{r:approx}, some choices of $\mu$ may make it 
necessary to approximate $\mu(\vc{x}^r)$ or $\displaystyle{\int_{\vc{x}^r} 
c(\vc{z},\,\vc{y}_i) \,d\mu(\vc{z})}$.
However, the majority of numerical studies we have seen restrict to
simple choices of $\mu$ (most often uniform).  For this reason, we
restricted our examples to cases where the cost and mass integrals can be 
written in a closed form.

\subsubsection{The closed-form mass $\mu(\vc{x}^r)$}\label{s:comp_mass}
The integral of $\mu$ over some box can be written as:
\begin{equation}
\mu(\vc{x}^r) = 
\int_{\vc{x}^r} \mu(\vc{z}) d\vc{z} = M(\vc{z}) \Big|_{\vc{z}\in\vc{x}^r}
\quad
\text{ with }
\quad
M : X \to \R^{\scriptscriptstyle^{\geq 0}}.
\end{equation}
Since $\mu$ is a probability density function, we must have $\int_A d\mu = 
1$.
For convenience, let $\hat \mu$ denote an un-normalized version of $\mu$, and 
similarly for $\hat M$.

Using the linearity of the integral, one can use linear combination
of simple functions for which exact solutions are known.
We can also construct more complex measures by partitioning 
$A$ into disjoint subsets.  In this case, 
however, we add an additional restriction in order to be sure that exact 
solutions can always be found:
We $\mu$-partition $A$ 
into subsets $S_1,\,\ldots,\,S_\sigma$, such that the boundaries of each $S_s$ 
fall on the initial set of grid lines.  Assume that 
for each set $S_s$, there exists a density function $\hat \mu_s$ that is 
exactly solvable on $S_s$.
From these, we consider $\hat \mu$ (and $\hat M$)
to be the piecewise functions defined on each $S_s$ as 
$\hat \mu_s$ (and $\hat M_s$, respectively).

Most of our computations were performed in two-dimensions. For such problems, 
given iteration $r$ and $\vc{x} = (x_1,\,x_2) \in A$, $\hat \mu(\vc{x}^r)$ 
can be written as
\begin{align}
\begin{split}
\hat \mu(\vc{x}^r) = 
&= \phantom{-}
  \hat M(x_1+w_r/2,\,x_2+w_r/2) - \hat M(x_1+w_r/2,\,x_2-w_r/2) \\
&\phantom{=}
- \hat M(x_1-w_r/2,\,x_2+w_r/2) + \hat M(x_1-w_r/2,\,x_2-w_r/2).
\end{split}
\end{align}
The closed-form choices used in our numerical tests are shown in 
Table~\ref{tb:closedMU}.
As described above, we used the table entries as building blocks in the 
construction of more complex measures.

\begin{table}[htpb]
\centering
\caption{Closed-form options for $\mu$}\label{tb:closedMU}%
\begin{tabular}{| m{3.5cm} m{0.8cm} || m{5cm} |}
\hline
$\hat \mu((x_1,\,x_2)) = 1$
& &
\vspace*{-6pt}
\begin{minipage}[t][0.2in][c]{5cm}
$\begin{aligned}
\hat M(u,\,v) = uv
\end{aligned}$
\end{minipage}%
\\\hline
$\hat \mu((x_1,\,x_2)) = x_1^tx_2^t$, & $t > 0$
&
\vspace*{-6pt}
\begin{minipage}[t][0.2in][c]{5cm}
$\begin{aligned}
\hat M(u,\,v) = (t+1)^{-2} (u^{t+1}v^{t+1})
\end{aligned}$
\end{minipage}%
\\\hline
$\hat \mu((x_1,\,x_2)) = e^{tx_1}$, & $t \neq 0$
&
\vspace*{-6pt}
\begin{minipage}[t][0.2in][c]{5cm}
$\begin{aligned}
\hat M(u,\,v) = t^{-1}ve^{tu}
\end{aligned}$
\end{minipage}%
\\\hline
$\hat \mu((x_1,\,x_2)) = e^{tx_2}$, & $t \neq 0$
&
\vspace*{-6pt}
\begin{minipage}[t][0.2in][c]{5cm}
$\begin{aligned}
\hat M(u,\,v) = t^{-1}ue^{tv}
\end{aligned}$
\end{minipage}%
\\\hline
\end{tabular}
\end{table}

\subsubsection{The closed-form Wasserstein distance over 
$\vc{x}^r$}\label{s:compWass}
We performed many tests where $\mu$ could be computed exactly but the \Was{} 
distance could not; see \sref{s:results} for details. In such cases, we made 
no attempt to approximate $P^*$, choosing instead to focus on the accuracy of 
the $\mu$-partition generated by the approximate shift set $\set{ \tilde{a}_i 
}_{i=1}^n$.

However, there were a number of cases in two dimensions where the choice of 
$\mu$ and $c$ allowed for closed-form computations.
In those cases, because the combination of $c$ and $\mu$ gives us an exact 
solution, there exists $C: X \times Y \to 
\R^{\scriptscriptstyle^{\geq 0}}$ such that
\begin{equation}
\int_{\vc{x}^r} c(\vc{z},\,\vc{y}_i) \,d\mu(\vc{z}) =
C(\vc{z},\,\vc{y}_i) \Big|_{\vc{z} \in \vc{x}^r}.
\end{equation}
As in \sref{s:comp_mass}, we write $\hat C$ when working with $\hat \mu$.

Now consider
$X,\,Y \subset \R^2$, $\vc{x} = (x_1,\,x_2) 
\in A$, and $\vc{y} = (y_1,\,y_2) \in \set{ \vc{y}_i }_{i=1}^n$.
When $\mu(\vc{x}^r) = 0$, the \Was{} distance on $\vc{x}^r$ is also 
zero.
For those boxes where $\mu(\vc{x}^r) > 0$, we can take 
advantage of the uniformity to define the function $\hat C$ in terms of a 
single 
variable: the component-wise distance between points given by 
$(\mathrm{\Delta}_1,\,\mathrm{\Delta}_2)$, where $\mathrm{\Delta}_1 = \abs*{x_1 
- y_1}$, $\mathrm{\Delta}_2 = 
\abs*{x_2 - y_2}$.
When the \Was{} distance over $\vc{x}^r$ can be computed and is non-zero, it 
takes the form
\begin{align}
\begin{split}
\int_{\vc{x}^r} c(\vc{z},\,\vc{y}) \,d\hat \mu(\vc{z})
&= \phantom{-}
  \hat C(\mathrm{\Delta}_1+w_r/2,\,\mathrm{\Delta}_2+w_r/2) - \hat 
C(\mathrm{\Delta}_1+w_r/2,\,\mathrm{\Delta}_2-w_r/2) \\
&\phantom{=}
- \hat C(\mathrm{\Delta}_1-w_r/2,\,\mathrm{\Delta}_2+w_r/2) + \hat 
C(\mathrm{\Delta}_1-w_r/2,\,\mathrm{\Delta}_2-w_r/2),
\end{split}
\end{align}
where
$\hat C : \R^2 \to \R^{\scriptscriptstyle^{\geq 0}}$ is an explicit function.

Table~\ref{tb:uniWass} gives \Was{} distance functions $\hat C$ for $c$ the 
$2$-norm
and the $p$-th power of some $p$-norm ($p\in[1,\infty)$).
By leveraging the linearity of the integral and subdividing $A$ into 
disjoint sets,
we can build combinations of ground costs and 
measures with closed form $C$.
We used this to perform tests in $\R^2$, with $\mu$ being either 
uniform or zero in relevant boxes.

\begin{table}[htpb]
\centering
\caption{Closed-form options for $C$ when $\mu$ is uniform or zero on 
$A$}\label{tb:uniWass}%
\begin{tabular}
{| m{1.7cm} || m{7.2cm} |}
\hline
$c$ & $\hat C(u,\,v)$ \rule{0pt}{2.5ex}
\\\hline\hline
$2$-norm
&
\vspace*{-10pt}
\begin{minipage}[t][1in][c]{7.2cm}
$\begin{aligned}
{\vphantom{\displaystyle\int\limits_0^1}}
\left\{
\begin{array}{ l  l }
   \frac{1}{6} u^3\log\left( \sqrt{u^2+v^2} + v \right)
\\ \quad+ \frac{1}{3} uv\sqrt{u^2 + v^2}
&\text{if} (u,\,v) \neq \vc{0}
\\ \quad+ \frac{1}{6} v^3\log\left( \sqrt{u^2+v^2} + u \right)
\\[12pt]
0 &\text{if} (u,\,v) = \vc{0}
\end{array}
\right.
\end{aligned}$
\end{minipage}%
\\\hline
\begin{minipage}[c][0.3in][c]{1in}
$p$-th power\\$p$-norm
\end{minipage}
&
\vspace*{-6pt}
\begin{minipage}[t][0.2in][c]{7.2cm}
$\begin{aligned}
(p+1)^{-1} (u^{p+1}v + uv^{p+1})
\end{aligned}$
\end{minipage}%
\\\hline
\end{tabular}
\end{table}

\subsection{Accuracy of the \Was{} distance}\label{s:Was_approx}
\subsubsection{When exact values are 
known.}\label{s:Was_known}
When $\mu$ is uniform and the region boundaries are known exactly, we can
use the formulas for the ground cost given in Table~\ref{tb:uniWass} to
compute exact values.
The $2$-norm was used for the problems shown in
Figures~\ref{f:exact}\subref{f:diag_bisect} and
\ref{f:exact}\subref{f:4by4grid}, and the $1$-norm was used to generate
\fsubref{f:exact}{f:badManhattan}.

\begin{figure}[htpb]
  \centering
  \subfloat[$2$ points: NW-SE line]{%
    \centering
    \label{f:diag_bisect}
    \resizebox{0.3\textwidth}{!}{%
    \begin{overpic}[width=\textwidth]{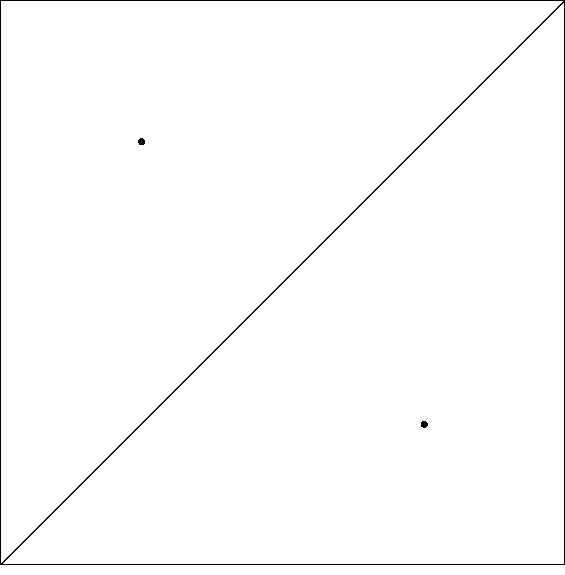}
    \put (22.5,70) {\scalebox{2.0}{$\vc{y}_0$}}
    \put (72.5,20) {\scalebox{2.0}{$\vc{y}_1$}}
    \end{overpic}}
  }%
  \subfloat[$4 \times 4$ grid arrangement]{%
    \label{f:4by4grid}
    \centering
    \resizebox{0.3\textwidth}{!}{%
    \begin{overpic}[width=\textwidth]{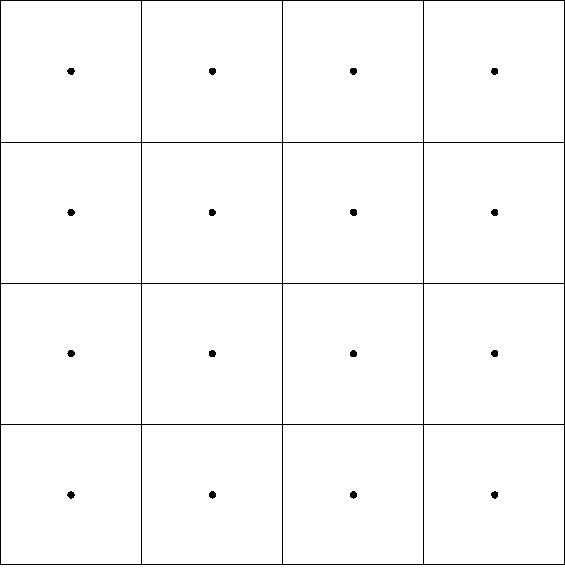}
    \put (10,08) {\scalebox{2.0}{$\vc{y}_0$}}
    \put (10,33) {\scalebox{2.0}{$\vc{y}_1$}}
    \put (10,58) {\scalebox{2.0}{$\vc{y}_2$}}
    \put (10,83) {\scalebox{2.0}{$\vc{y}_3$}}
    \put (35,08) {\scalebox{2.0}{$\vc{y}_4$}}
    \put (35,33) {\scalebox{2.0}{$\vc{y}_5$}}
    \put (35,58) {\scalebox{2.0}{$\vc{y}_6$}}
    \put (35,83) {\scalebox{2.0}{$\vc{y}_7$}}
    \put (60,08) {\scalebox{2.0}{$\vc{y}_8$}}
    \put (60,33) {\scalebox{2.0}{$\vc{y}_9$}}
    \put (60,58) {\scalebox{2.0}{$\vc{y}_{10}$}}
    \put (60,83) {\scalebox{2.0}{$\vc{y}_{11}$}}
    \put (85,08) {\scalebox{2.0}{$\vc{y}_{12}$}}
    \put (85,33) {\scalebox{2.0}{$\vc{y}_{13}$}}
    \put (85,58) {\scalebox{2.0}{$\vc{y}_{14}$}}
    \put (85,83) {\scalebox{2.0}{$\vc{y}_{15}$}}
    \end{overpic}}
  }%
  \subfloat[$2$ points: ``bad'' $1$-norm]{%
    \centering
    \label{f:badManhattan}
    \resizebox{0.3\textwidth}{!}{%
    \begin{overpic}[width=\textwidth]{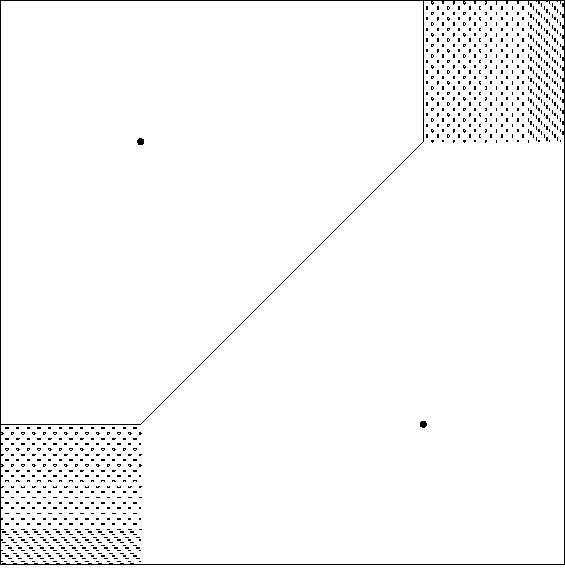}
    \put (22.5,70) {\scalebox{2.0}{$\vc{y}_0$}}
    \put (72.5,20) {\scalebox{2.0}{$\vc{y}_1$}}
    \end{overpic}}
  }%
\caption{Problems where the exact \Was{} distance and set of shifts are 
known}\label{f:exact}
\end{figure}

For two points on the Northwest-Southeast diagonal, placed as shown in 
\fsubref{f:exact}{f:diag_bisect}, the exact \Was{} distance is equal to
\begin{align*}
P^*_{\mathrm{NWSE}}
:= &\,\,\frac{1}{96} \left[\sqrt{2} + 7\sqrt{10} + \sinh^{-1}(1)
+ 2\sqrt{2} \sinh^{-1}(2) + \sinh^{-1}(3) \right]
\\\approx &\,\,0.3159707808963017.
\end{align*}
Table~\ref{tb:WasExactErrs}\subref{tb:NWSE} shows the absolute error
for various $w_*$.
Note that the actual decrease in error is roughly quadratic in $w_*$:
$\abs{\widetilde{P}^*_{\mathrm{NWSE}}-P^*_{\mathrm{NWSE}}} \approx 2.122 
(w_*)^{1.995}$.

\begin{table}[htpb]
\centering
\caption{\Was{} errors for the NW-SE, $4\times4$, and ``bad'' $1$-norm 
problems}\label{tb:WasExactErrs}%
\subfloat[NW-SE errors]{%
\label{tb:NWSE}
\centering
\begin{tabular}{c c}
\hline
$w_*$ & abs.\ error 
\\\hline\hline
$2^{-9^{\phantom{1}}}$  & $8.42 \times 10^{-6\phantom{1}}$
\\
$2^{-10}$  & $2.11 \times 10^{-6\phantom{1}}$ 
\\
$2^{-11}$  & $5.27 \times 10^{-7\phantom{1}}$
\\
$2^{-12}$  & $1.32 \times 10^{-7\phantom{1}}$ 
\\
$2^{-13}$  & $3.30 \times 10^{-8\phantom{1}}$ 
\\
$2^{-14}$  & $8.24 \times 10^{-9\phantom{1}}$ 
\\
$2^{-15}$  & $2.06 \times 10^{-9\phantom{1}}$ 
\\
$2^{-16}$  & $5.15 \times 10^{-10}$ 
\\
$2^{-17}$ & $1.29 \times 10^{-10}$
\\\hline
\end{tabular}
}%
\subfloat[$4\times4$ errors]{%
\label{tb:4x4}
\centering
\begin{tabular}{c c}
\hline
$w_*$ & abs.\ error 
\\\hline\hline
$2^{-9^{\phantom{1}}}$   & $2.02 \times 10^{-5\phantom{1}}$ 
\\
$2^{-10}$  & $5.04 \times 10^{-6\phantom{1}}$ 
\\
$2^{-11}$  & $1.26 \times 10^{-6\phantom{1}}$ 
\\
$2^{-12}$  & $3.15 \times 10^{-7\phantom{1}}$ 
\\
$2^{-13}$  & $7.88 \times 10^{-8\phantom{1}}$ 
\\
$2^{-14}$  & $1.97 \times 10^{-8\phantom{1}}$ 
\\
$2^{-15}$  & $4.93 \times 10^{-9\phantom{1}}$ 
\\
$2^{-16}$  & $1.23 \times 10^{-9\phantom{1}}$ 
\\
$2^{-17}$  & $3.08 \times 10^{-10}$
\\\hline
\end{tabular}
}%
\subfloat[``Bad'' $1$-norm errors]{%
\label{tb:badManhattan}
\centering
\begin{tabular}{c c}
\hline
$w_*$ & abs.\ error 
\\\hline\hline
$2^{-9^{\phantom{1}}}$  & $8.66 \times 10^{-6\phantom{1}}$
\\
$2^{-10}$  & $2.16 \times 10^{-6\phantom{1}}$ 
\\
$2^{-11}$  & $5.40 \times 10^{-7\phantom{1}}$
\\
$2^{-12}$  & $1.35 \times 10^{-7\phantom{1}}$ 
\\
$2^{-13}$  & $3.32 \times 10^{-8\phantom{1}}$ 
\\
$2^{-14}$  & $8.26 \times 10^{-9\phantom{1}}$ 
\\
$2^{-15}$  & $2.06 \times 10^{-9\phantom{1}}$ 
\\
$2^{-16}$  & $5.15 \times 10^{-10}$ 
\\
$2^{-17}$ & $1.29 \times 10^{-10}$
\\\hline
\end{tabular}
}%
\end{table}

When we have a $4\times4$ arrangement of boxes, with each $\vc{y}_i$ in the 
center, as shown in  \fsubref{f:exact}{f:4by4grid}, the exact \Was{} distance 
is 
equal to
\begin{equation*}
P^*_{4\times4}
:= \frac{1}{24}\left[ \sqrt{2} + \sinh^{-1}(1) \right]
\approx 0.09564946455802659.
\end{equation*}
Table~\ref{tb:WasExactErrs}\subref{tb:4x4} shows the error.
Again, the observed error decrease is 
roughly quadratic in $w_*$:
$\abs{\widetilde{P}^*_{4\times4}-P^*_{4\times4}} \approx 5.254 (w_*)^{1.999}$.

Recall that the theorems presented in \sref{s:math} offer no
convergence guarantee for the behavior of the $1$-norm.
In fact, the optimal solution may not be $\mu$-a.e.\ unique,
in which case the set $\set{A_i}_{i=1}^n$ may not partition $A$.
This is exactly what happens for the problem shown in
\fsubref{f:exact}{f:badManhattan}.
The problem is identical to that shown in \fsubref{f:exact}{f:diag_bisect},
except that $c$ is the $1$-norm.
Because of the change in norms, the Northeast and Southwest corners of
$A$ do not have unique transport destinations.
The loss of $\mu$-a.e.\ uniqueness, and resulting failure to partition,
is clearly visible in the figure.
However, the exact \Was{} distance can still be computed, and is equal to
\begin{align*}
P^*_{\mathrm{bad}}
:= &\,\,\frac{19}{48}.
\end{align*}
Table~\ref{tb:WasExactErrs}\subref{tb:badManhattan} shows the error.
Even though the theorems do not guarantee convergence, and
partitioning fails,
the decrease in error is nonetheless quadratic in $w_*$:
$\abs{\widetilde{P}^*_{\mathrm{bad}}-P^*_{\mathrm{bad}}} \approx 2.575 
(w_*)^{2.016}$.

For all three problems, we know the exact shift values: since every point in 
$A$ goes to the nearest $\vc{y}_i$, the shift differences are all zero, which 
means every shift should be identical.
In the $4\times4$ and ``bad'' $1$-norm problems, the shift values are
identical for every choice of $w_*$.
For the $4\times4$ problem, this is a result of computing regions that
exactly correspond to the structure of our grid.
For the ``bad'' $1$-norm problem, the exactness derives from the
relative simplicity of ground cost computations.
The shift values for the NWSE problem have an error whose decrease is roughly 
linear 
with respect to $w_*$:
$\abs{\tilde{a}_2-\tilde{a}_1} \approx 0.339 (w_*)^{1.008}$.
When $w_*=2^{-16}$, this shift error is $4.83 \times 10^{-6}$.

\subsubsection{When exact values are not 
known.}\label{s:Was_unknown}
As \sref{s:Wass_err} shows, even if the \Was{} distance is unknown, the 
\Was{} approximation error at the end of the $r$-th iteration is bounded above 
by
\begin{equation}\label{e:WasErrComputed}
\sum_{\vc{x} \in B^r} \mu(\vc{x}^r)
\max_{\vc{x}_0 \in \vc{x}^r} g_{ij}(\vc{x}_0),
\end{equation}
where $i$ and $j$, $i \neq j$ refer to the destinations of $\vc{x}$ and some 
neighbor.
In practice, we can use continuity to refine that estimate still further,
as described in \sref{s:wass}.

As $w_r \to 0$, $\max_{\vc{x}_0 \in \vc{x}^r} g_{ij}(\vc{x}_0) \to 
\abs{a_{ij}}$ for each $i \neq j$, and $\mu(\cl{B}^r) \to 0$.
If the boundary method is working effectively, we can expect to 
see the \Was{} distance error decreasing with respect to $\mu(\cl{B}^r)$. If 
$\mu$ is uniform on $A$, that decrease should be linear with respect to the 
volume $\abs{\cl{B}^r}$.

We considered the change in the computed \Was{} distance for Example 
\ref{x:steps}
using three canonical ground costs: the $1$-norm, the $2$-norm, and 
the squared $2$-norm. The resulting $\mu$-partitions are shown in 
Figures~\ref{f:norm}\subref{f:norm1}, \ref{f:meas_comp}\subref{f:uni}, and 
\ref{f:odd_norms}\subref{f:norm2sq}, respectively.
Since $\mu$ is uniform, and $A=[0,\,1]^2$, \tref{t:WassErr} suggests that we
should see a quadratic convergence for the $2$-norm.
(The theorem makes no convergence claim for the $1$-norm or the squared 
$2$-norm.)

For each ground cost, we computed the \Was{} approximation error in two ways:
\begin{enumerate}
\item The worst-case \Was{} distance error bound, given by applying 
\eqref{e:WasErrComputed}.
\item The rate of change with respect to a reference approximation,
\begin{equation*}
\mathrm{\Delta} \widetilde{P}^*_{16}(w_*) :=
\mathrm{\Delta} \widetilde{P}^*_{16}(2^{-m}) =
\frac{\abs*{\widetilde{P}^*_m - \widetilde{P}^*_{16}}}
{\abs*{\widetilde{P}^*_{m+1} - \widetilde{P}^*_{16}}},
\quad
\text{ for }
m < 15.
\end{equation*}
\end{enumerate}
For all three ground cost functions, the worst-case \Was{} distance
error is roughly linear in $w_*$, and the 
rate of change of $\mathrm{\Delta} \widetilde{P}^*_{16}$ is roughly
quadratic in $w_*$.
Though \tref{t:WassErr} only guarantees quadratic convergence for
the $2$-norm, we also observe quadratic convergence for
the $1$-norm and squared $2$-norm.
For this example, the $1$-norm generated a $\mu$-a.e.\ unique partition, but
comparable convergence was seen in
tests where partitioning failed.
Results are given in Table~\ref{tb:WasApproxDataEQ}.

\begin{table}[htpb]
\centering
\caption{\Was{} approximation behavior with respect 
to $w_*$}\label{tb:WasApproxDataEQ}
\renewcommand{\arraystretch}{1.5}
\begin{tabular}{| c || c | c | c |}
\hline
$c$ & $\widetilde{P}^*_{16}$ & $\mathrm{err}_{\max}(w_*)$
& $\mathrm{\Delta} \widetilde{P}^*_{16}(w_*)$
\\\hline\hline
$1$-norm & $0.25702262181$
& $0.457(w_*)^{1.020}$
& $1.186(w_*)^{2.025}$
\\\hline
$2$-norm & $0.20754605961 $
& $0.361(w_*)^{1.008}$
& $4.151(w_*)^{2.023}$
\\\hline
squared $2$-norm & $0.05290682486 $
& $0.221(w_*)^{1.008}$
& $2.668(w_*)^{2.029}$
\\\hline
\end{tabular}
\end{table}

\subsection{$\mu$-Partitions in $\R^2$}\label{s:R2partitions}
\subsubsection{Uniform and non-uniform measures $\mu$ and 
$\nu$}\label{s:meas_options}
We include three examples with variations of $\mu$ and $\nu$, shown in 
\fref{f:meas_comp}.
All three assume $c$ is the $2$-norm.

\begin{figure}[htpb]
  \centering
  \subfloat[$\mu$ and $\nu$ uniform]{%
    \label{f:uni}
    \centering
    \resizebox{0.3\textwidth}{!}{%
    \begin{overpic}[width=\textwidth]{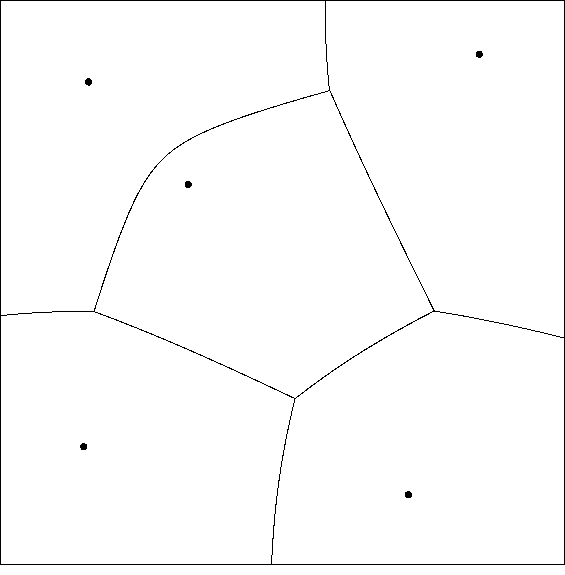}
    \put (11,81) {\scalebox{2.0}{$\vc{y}_0$}}
    \put (81,86) {\scalebox{2.0}{$\vc{y}_1$}}
    \put (29,63) {\scalebox{2.0}{$\vc{y}_2$}}
    \put (10,17) {\scalebox{2.0}{$\vc{y}_3$}}
    \put (68,08) {\scalebox{2.0}{$\vc{y}_4$}}
    \end{overpic}}
  }%
  \subfloat[$\nu$ non-uniform]{%
    \label{f:nu_non}
    \centering
    \resizebox{0.3\textwidth}{!}{%
    \begin{overpic}[width=\textwidth]{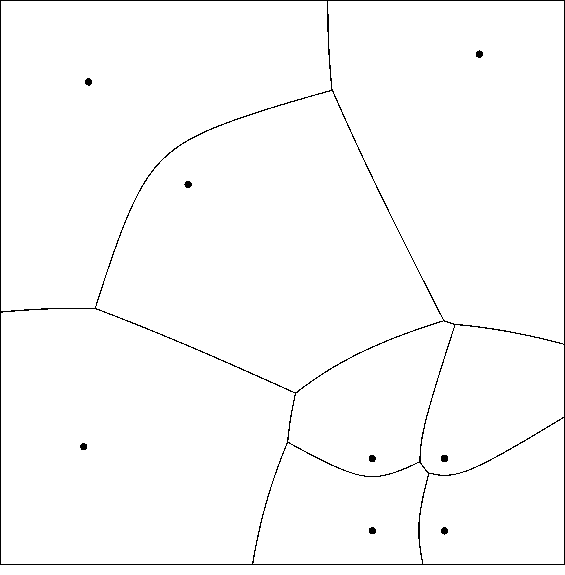}
    \put (11,81) {\scalebox{2.0}{$\vc{y}_0$}}
    \put (81,86) {\scalebox{2.0}{$\vc{y}_1$}}
    \put (29,63) {\scalebox{2.0}{$\vc{y}_2$}}
    \put (10,17) {\scalebox{2.0}{$\vc{y}_3$}}
    \put (60,03) {\scalebox{2.0}{$\vc{y}_4$}}
    \put (80,21) {\scalebox{2.0}{$\vc{y}_5$}}
    \put (80,03) {\scalebox{2.0}{$\vc{y}_6$}}
    \put (60,24) {\scalebox{2.0}{$\vc{y}_7$}}
    \end{overpic}}
  }%
  \subfloat[$\mu(x_1,\,x_2) = x_1x_2$]{%
    \label{f:mu_xy}
    \centering
    \resizebox{0.3\textwidth}{!}{%
    \begin{overpic}[width=\textwidth]{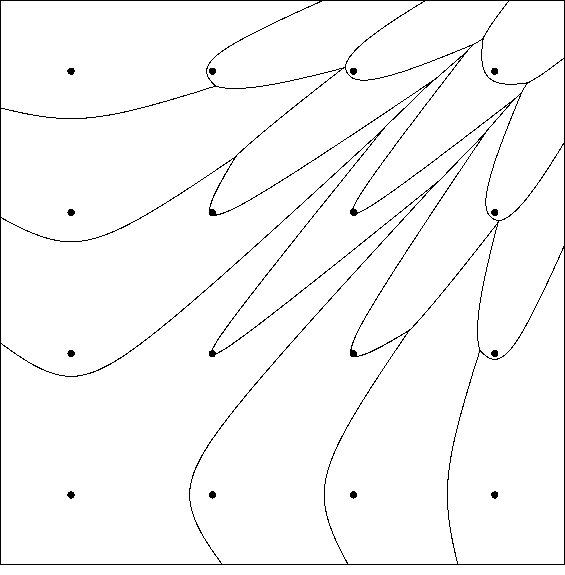}
    \put (08,08) {\scalebox{2.0}{$\vc{y}_0$}}
    \put (10,40) {\scalebox{2.0}{$\vc{y}_1$}}
    \put (10,65) {\scalebox{2.0}{$\vc{y}_2$}}
    \put (10,83) {\scalebox{2.0}{$\vc{y}_3$}}
    \put (40,11) {\scalebox{2.0}{$\vc{y}_4$}}
    \put (33,33) {\scalebox{2.0}{$\vc{y}_5$}}
    \put (33,58) {\scalebox{2.0}{$\vc{y}_6$}}
    \put (40,87) {\scalebox{2.0}{$\vc{y}_7$}}
    \put (65,11) {\scalebox{2.0}{$\vc{y}_8$}}
    \put (58,33) {\scalebox{2.0}{$\vc{y}_9$}}
    \put (56,58) {\scalebox{2.0}{$\vc{y}_{10}$}}
    \put (65,89) {\scalebox{2.0}{$\vc{y}_{11}$}}
    \put (90,11) {\scalebox{2.0}{$\vc{y}_{12}$}}
    \put (90,33) {\scalebox{2.0}{$\vc{y}_{13}$}}
    \put (78,60) {\scalebox{2.0}{$\vc{y}_{14}$}}
    \put (89,90) {\scalebox{2.0}{$\vc{y}_{15}$}}
    \end{overpic}}
  }%
\caption{Partitions for uniform and non-uniform 
measures}\label{f:meas_comp}
\end{figure}

In \fsubref{f:meas_comp}{f:uni}, we assume $\mu$ is the uniform 
continuous probability 
distribution on $A$, and $\nu$ is the uniform discrete distribution with 
$n=5$.
The five points where $\nu = 1/5$ are placed in the positions used 
in~\cite{Barrett2007a}.
\fsubref{f:meas_comp}{f:uni} shows the $\mu$-partition obtained by the boundary 
method; for 
comparison, see  \cite[Figure 3 (right)]{Barrett2007a}.

Starting from the points shown in \fsubref{f:meas_comp}{f:uni}, we next take 
the 
point 
$\vc{y}_4$ and split it into four new points, each of one quarter-mass, 
positioned equidistantly from the point's original location.
This gives us a non-uniform $\nu$ with four points of weight $1/5$ and four 
of weight $1/20$.
We keep $\mu$ uniform.
The resulting $\mu$-partition is shown in \fsubref{f:meas_comp}{f:nu_non}.

In \fsubref{f:meas_comp}{f:mu_xy}, we choose the nonuniform probability density
$\mu(x_1,\,x_2) = \frac{1}{4}x_1x_2$.
For $\nu$, we choose the uniform $4 \times 4$ grid of points given in 
\fsubref{f:exact}{f:4by4grid}. By comparing the results in Figures 
\ref{f:exact}\subref{f:4by4grid} and 
\ref{f:meas_comp}\subref{f:mu_xy}, the impact of $\mu$'s nonuniformity becomes 
obvious.
While the individual regions no longer have equal Lebesgue measure, each has 
equal $\mu$-measure $1/16$.
The larger regions in the lower-left correspond to the lower density of $\mu$ 
in that corner, while the smaller regions in the upper-right correspond to the 
higher concentration of $\mu$-density there.

\subsubsection{Discontinuous and zero-measure 
$\mu$}\label{s:bad_mu}
Next, we deliberately introduce a discontinuous $\mu$ that is not strictly 
positive:
\begin{equation}
\mu(\vc{x}) =
\begin{cases}
0 & \text{ if } \vc{x} \in [0,\,1/2]^2 \\
4/3 & \text{ otherwise. }
\end{cases}
\end{equation}
We still have
$\int_A \,d\mu(\vc{x}) = 1$,
so $\mu$ is a probability density function on $A = [0,1]^2$.
For $\nu$, we use the uniform $4\times4$ grid shown in 
\fsubref{f:exact}{f:4by4grid}.
\fref{f:zero} shows the results.

\begin{figure}[htpb]
  \centering
  \subfloat[boundary region $\cl{B}^r$]{%
    \label{f:zero_bnd}
    \centering
    \resizebox{0.3\textwidth}{!}{%
    \begin{overpic}[width=\textwidth]{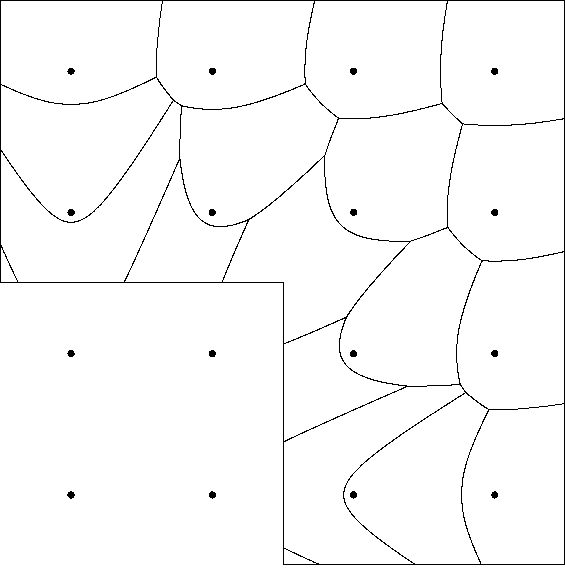}
    \put (10,08) {\scalebox{2.0}{$\vc{y}_0$}}
    \put (10,33) {\scalebox{2.0}{$\vc{y}_1$}}
    \put (10,65) {\scalebox{2.0}{$\vc{y}_2$}}
    \put (10,90) {\scalebox{2.0}{$\vc{y}_3$}}
    \put (35,08) {\scalebox{2.0}{$\vc{y}_4$}}
    \put (35,33) {\scalebox{2.0}{$\vc{y}_5$}}
    \put (37,65) {\scalebox{2.0}{$\vc{y}_6$}}
    \put (35,90) {\scalebox{2.0}{$\vc{y}_7$}}
    \put (65,11) {\scalebox{2.0}{$\vc{y}_8$}}
    \put (65,39) {\scalebox{2.0}{$\vc{y}_9$}}
    \put (63,65) {\scalebox{2.0}{$\vc{y}_{10}$}}
    \put (62,90) {\scalebox{2.0}{$\vc{y}_{11}$}}
    \put (90,11) {\scalebox{2.0}{$\vc{y}_{12}$}}
    \put (89,39) {\scalebox{2.0}{$\vc{y}_{13}$}}
    \put (89,64) {\scalebox{2.0}{$\vc{y}_{14}$}}
    \put (87,90) {\scalebox{2.0}{$\vc{y}_{15}$}}
    \end{overpic}}
  }%
  \subfloat[shift characterization]{%
    \label{f:zero_recon}
    \centering
    \resizebox{0.3\textwidth}{!}{%
    \begin{overpic}[width=\textwidth]{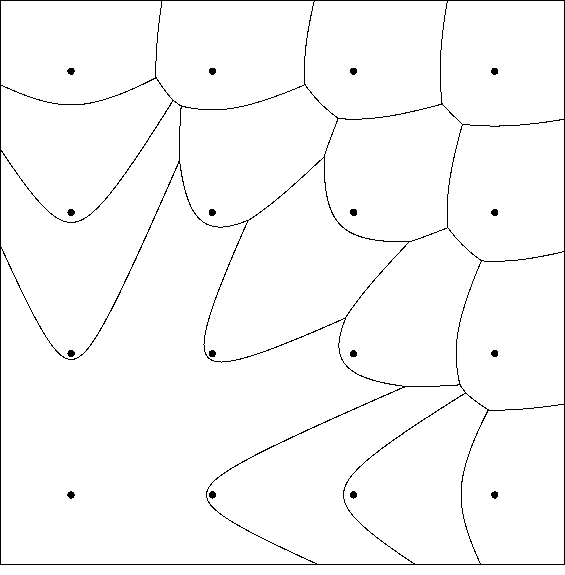}
    \put (08,08) {\scalebox{2.0}{$\vc{y}_0$}}
    \put (10,40) {\scalebox{2.0}{$\vc{y}_1$}}
    \put (10,65) {\scalebox{2.0}{$\vc{y}_2$}}
    \put (10,90) {\scalebox{2.0}{$\vc{y}_3$}}
    \put (40,11) {\scalebox{2.0}{$\vc{y}_4$}}
    \put (40,40) {\scalebox{2.0}{$\vc{y}_5$}}
    \put (37,65) {\scalebox{2.0}{$\vc{y}_6$}}
    \put (35,90) {\scalebox{2.0}{$\vc{y}_7$}}
    \put (65,11) {\scalebox{2.0}{$\vc{y}_8$}}
    \put (65,39) {\scalebox{2.0}{$\vc{y}_9$}}
    \put (63,65) {\scalebox{2.0}{$\vc{y}_{10}$}}
    \put (62,90) {\scalebox{2.0}{$\vc{y}_{11}$}}
    \put (90,11) {\scalebox{2.0}{$\vc{y}_{12}$}}
    \put (89,39) {\scalebox{2.0}{$\vc{y}_{13}$}}
    \put (89,64) {\scalebox{2.0}{$\vc{y}_{14}$}}
    \put (87,90) {\scalebox{2.0}{$\vc{y}_{15}$}}
    \end{overpic}}
  }%
  \subfloat[shaded zero-$\mu$ region]{%
    \label{f:zero_end}
    \centering
    \resizebox{0.3\textwidth}{!}{%
    \begin{overpic}[width=\textwidth]{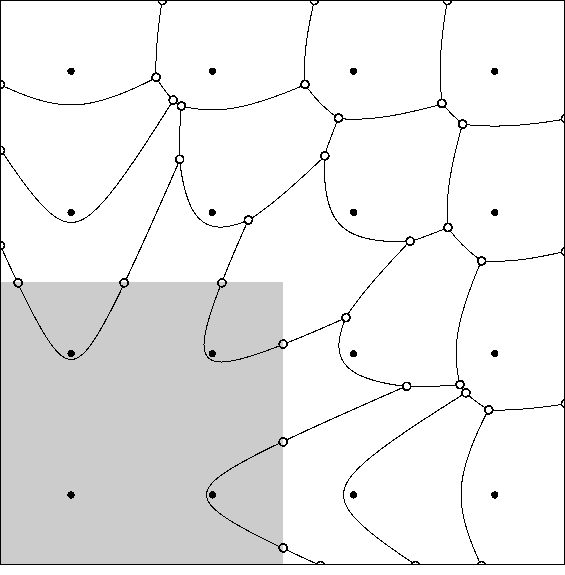}
    \put (08,08) {\scalebox{2.0}{$\vc{y}_0$}}
    \put (10,40) {\scalebox{2.0}{$\vc{y}_1$}}
    \put (10,65) {\scalebox{2.0}{$\vc{y}_2$}}
    \put (10,90) {\scalebox{2.0}{$\vc{y}_3$}}
    \put (40,11) {\scalebox{2.0}{$\vc{y}_4$}}
    \put (40,40) {\scalebox{2.0}{$\vc{y}_5$}}
    \put (37,65) {\scalebox{2.0}{$\vc{y}_6$}}
    \put (35,90) {\scalebox{2.0}{$\vc{y}_7$}}
    \put (65,11) {\scalebox{2.0}{$\vc{y}_8$}}
    \put (65,39) {\scalebox{2.0}{$\vc{y}_9$}}
    \put (63,65) {\scalebox{2.0}{$\vc{y}_{10}$}}
    \put (62,90) {\scalebox{2.0}{$\vc{y}_{11}$}}
    \put (90,11) {\scalebox{2.0}{$\vc{y}_{12}$}}
    \put (89,39) {\scalebox{2.0}{$\vc{y}_{13}$}}
    \put (89,64) {\scalebox{2.0}{$\vc{y}_{14}$}}
    \put (87,90) {\scalebox{2.0}{$\vc{y}_{15}$}}
    \end{overpic}}
  }%
\caption{$\mu$ is zero in the lower-left quadrant}\label{f:zero}
\end{figure}

In \fsubref{f:zero}{f:zero_bnd}, we see the boundary set used to generate the 
solution.
Points between regions are retained, as are points adjacent to regions of 
measure zero. No computations are done on the lower-left region, because any 
destination is equally valid on boxes of $\mu$-measure zero.

However, when the shift definition is applied to the semi-discrete optimal 
transport problem, there is only one valid shift-characterized solution over 
$A$.
\fsubref{f:zero}{f:zero_recon} shows that solution.
The unique shift differences force the selection of a unique boundary set 
$B$, even in the region where $\mu$ is zero.

\fsubref{f:zero}{f:zero_end} shows the shift characterization again, but here 
the region of $\mu$-zero measure is shaded, helping to confirm visually that 
the regions have equal $\mu$-measure.
\fsubref{f:zero}{f:zero_end} also shows the locations of intersection points we 
identified using the boundary method.
These intersections were used to accurately compute the set of shifts.

\subsubsection{Norms as ground cost 
functions}\label{s:cost_options}
The computations in Sections \ref{s:meas_options} and \ref{s:bad_mu}
all assume the ground cost function equals the $2$-norm, but as
\sref{s:Was_unknown} suggests, computation with other functions is quite
possible.
Using the same problem solved with the $2$-norm in 
\fsubref{f:meas_comp}{f:uni}, we generated
$\mu$-partitions for a wide range of $p$-norm ground costs.
Results for the $1$-norm, $10$-norm, and $\infty$-norm
are shown in \fref{f:norm}.
Note that the $1$-norm and $\infty$-norm converge to
($\mu$-a.e.\ unique) solutions, even though
those norms are not covered by our theorems.

\begin{figure}[htpb]
  \centering
  \subfloat[${1}$ -norm ground cost]{%
    \label{f:norm1}
    \centering
    \resizebox{0.3\textwidth}{!}{%
    \begin{overpic}[width=\textwidth]{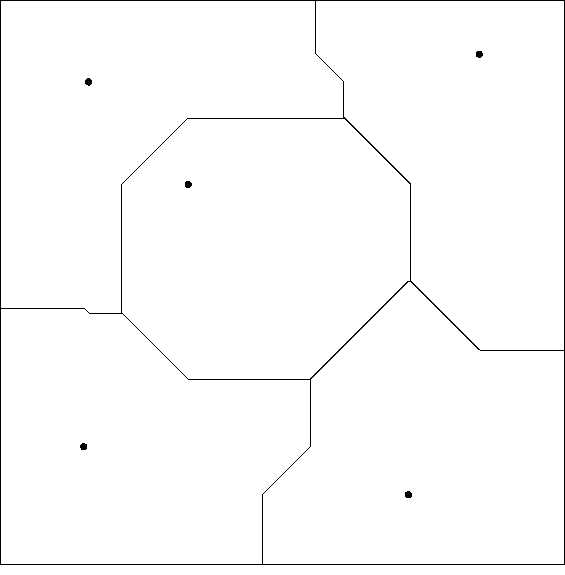}
    \put (11,81) {\scalebox{2.0}{$\vc{y}_0$}}
    \put (81,86) {\scalebox{2.0}{$\vc{y}_1$}}
    \put (29,63) {\scalebox{2.0}{$\vc{y}_2$}}
    \put (10,17) {\scalebox{2.0}{$\vc{y}_3$}}
    \put (68,08) {\scalebox{2.0}{$\vc{y}_4$}}
    \end{overpic}}
  }%
  \subfloat[${10}$-norm ground cost]{%
    \label{f:norm10}
    \centering
    \resizebox{0.3\textwidth}{!}{%
    \begin{overpic}[width=\textwidth]{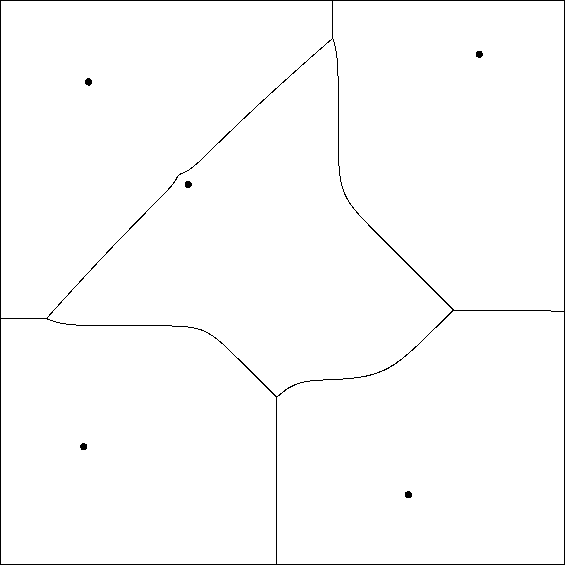}
    \put (11,81) {\scalebox{2.0}{$\vc{y}_0$}}
    \put (81,86) {\scalebox{2.0}{$\vc{y}_1$}}
    \put (34,63) {\scalebox{2.0}{$\vc{y}_2$}}
    \put (10,17) {\scalebox{2.0}{$\vc{y}_3$}}
    \put (68,08) {\scalebox{2.0}{$\vc{y}_4$}}
    \end{overpic}}
  }%
  \subfloat[${\infty}$-norm ground cost]{%
    \label{f:sup-norm}
    \centering
    \resizebox{0.3\textwidth}{!}{%
    \begin{overpic}[width=\textwidth]{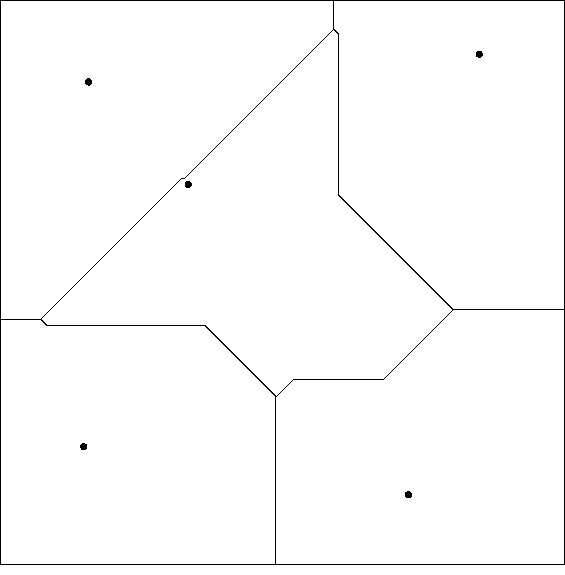}
    \put (11,81) {\scalebox{2.0}{$\vc{y}_0$}}
    \put (81,86) {\scalebox{2.0}{$\vc{y}_1$}}
    \put (34,63) {\scalebox{2.0}{$\vc{y}_2$}}
    \put (10,17) {\scalebox{2.0}{$\vc{y}_3$}}
    \put (68,08) {\scalebox{2.0}{$\vc{y}_4$}}
    \end{overpic}}
  }%
\caption{Equal area using different ground cost norms}\label{f:norm}
\end{figure}

\subsubsection{Other ground costs $c$}\label{s:weird_costs}
The computations above all assume that the ground cost function is a norm.
However, the boundary method works equally well on much more general ground 
cost 
functions.
Three examples are shown in \fref{f:odd_norms}.

\begin{figure}[htpb]
  \centering
  \subfloat[squared $2$-norm]{%
    \centering
    \label{f:norm2sq}
    \resizebox{0.3\textwidth}{!}{%
    \begin{overpic}[width=\textwidth]{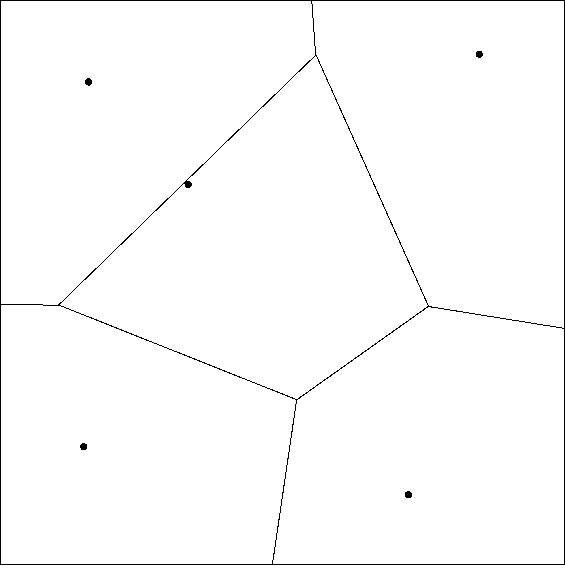}
    \put (11,81) {\scalebox{2.0}{$\vc{y}_0$}}
    \put (81,86) {\scalebox{2.0}{$\vc{y}_1$}}
    \put (34,63) {\scalebox{2.0}{$\vc{y}_2$}}
    \put (10,17) {\scalebox{2.0}{$\vc{y}_3$}}
    \put (68,08) {\scalebox{2.0}{$\vc{y}_4$}}
    \end{overpic}}
  }%
  \subfloat[$p$-function ground cost]{%
    \label{f:norm0.5}
    \centering
    \resizebox{0.3\textwidth}{!}{%
    \begin{overpic}[width=\textwidth]{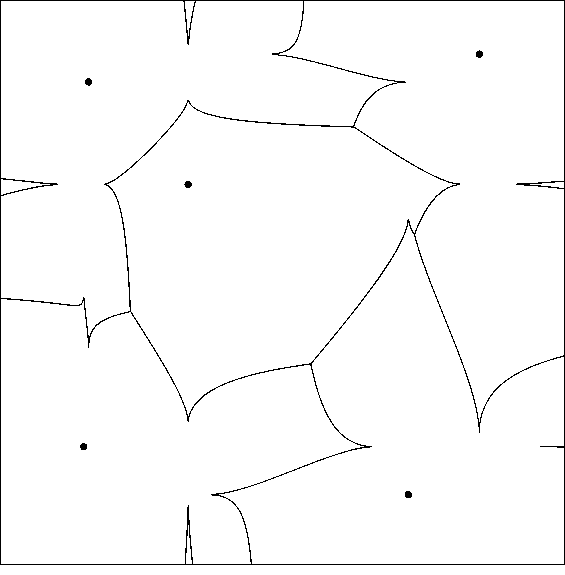}
    \put (11,81) {\scalebox{2.0}{$\vc{y}_0$}}
    \put (81,86) {\scalebox{2.0}{$\vc{y}_1$}}
    \put (29,63) {\scalebox{2.0}{$\vc{y}_2$}}
    \put (10,17) {\scalebox{2.0}{$\vc{y}_3$}}
    \put (68,08) {\scalebox{2.0}{$\vc{y}_4$}}
    \end{overpic}}
  }%
  \subfloat[$p$-polynomial ground cost]{%
    \label{f:mix-norm02}
    \centering
    \resizebox{0.3\textwidth}{!}{%
    \begin{overpic}[width=\textwidth]{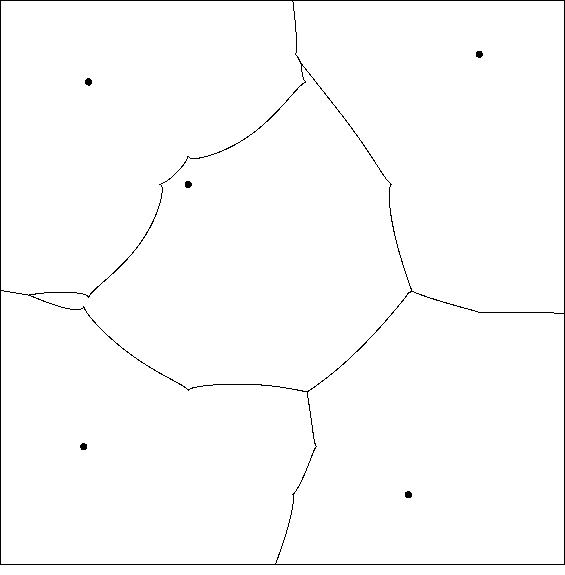}
    \put (11,81) {\scalebox{2.0}{$\vc{y}_0$}}
    \put (81,86) {\scalebox{2.0}{$\vc{y}_1$}}
    \put (34,63) {\scalebox{2.0}{$\vc{y}_2$}}
    \put (10,17) {\scalebox{2.0}{$\vc{y}_3$}}
    \put (68,08) {\scalebox{2.0}{$\vc{y}_4$}}
    \end{overpic}}
  }%
\caption{Equal area using non-norm ground 
costs}\label{f:odd_norms}
\end{figure}

\fsubref{f:odd_norms}{f:norm2sq} shows the result given by the squared 
$2$-norm. 
Because the squared $2$-norm is not itself a norm, we were only able to make the 
most general mathematical claims regarding its behavior.
However, the boundary method has no trouble with it.
In fact, as \sref{s:Was_unknown} indicates,
its convergence behavior is practically identical to that of the $p$-norms.

When $0 < p < 1$, the $p$-norm formula can still be applied,
even though the resulting function does not satisfy the triangle inequality.
Formally, one has
\begin{equation}
c_p(\vc{x}_1,\,\vc{x}_2) :=
\begin{cases}
\sum_{k=1}^d \left[ \abs{x^2_k-x^1_k}^p \right]^{1/p} & p \in (0,\,\infty) \\
\max_{k \in \N_d} \abs{x^2_k-x^1_k} & p = \infty.
\end{cases}
\end{equation}
As \fsubref{f:odd_norms}{f:norm0.5} illustrates, even these ``$p$-function'' 
transport problems can be 
approximated. However, when $p<1$, the regions become discontinuous
and disconnected, as typified by the ``spikes'' on the exterior walls.
(The spike on the lower right is part of the region coupled with $\vc{y}_3$, 
while the other four spikes are coupled with $\vc{y}_2$.)
Note that the ${1/2}$-function is concave. Such functions are directly 
applicable to transport problems involving economies of scale; e.g.,
see~\cite{Gangbo1996a}.

\fsubref{f:odd_norms}{f:mix-norm02} shows a ground cost function defined as a 
polynomial 
combination of $p$-functions with positive coefficients:
\begin{equation*}
c(\vc{x}_1,\,\vc{x}_2) = 
4c_2(\vc{x}_1,\,\vc{x}_2)^{28/5} 
+ 61c_{1/2}(\vc{x}_1,\,\vc{x}_2).
\end{equation*}
This function, like many other ``$p$-polynomial'' functions,
is neither
convex nor concave, changing behavior with distance.

\subsection{$\mu$-Partitions in $\R^3$}\label{s:R3partitions}
As we showed in \sref{s:math}, there is no theoretical obstacle to applying the 
boundary method to higher-dimensional problems, though visual 
representation becomes more complex.

\begin{figure}[htpb]
  \centering
    \resizebox{0.50\textwidth}{!}{%
    \begin{overpic}[width=\textwidth]{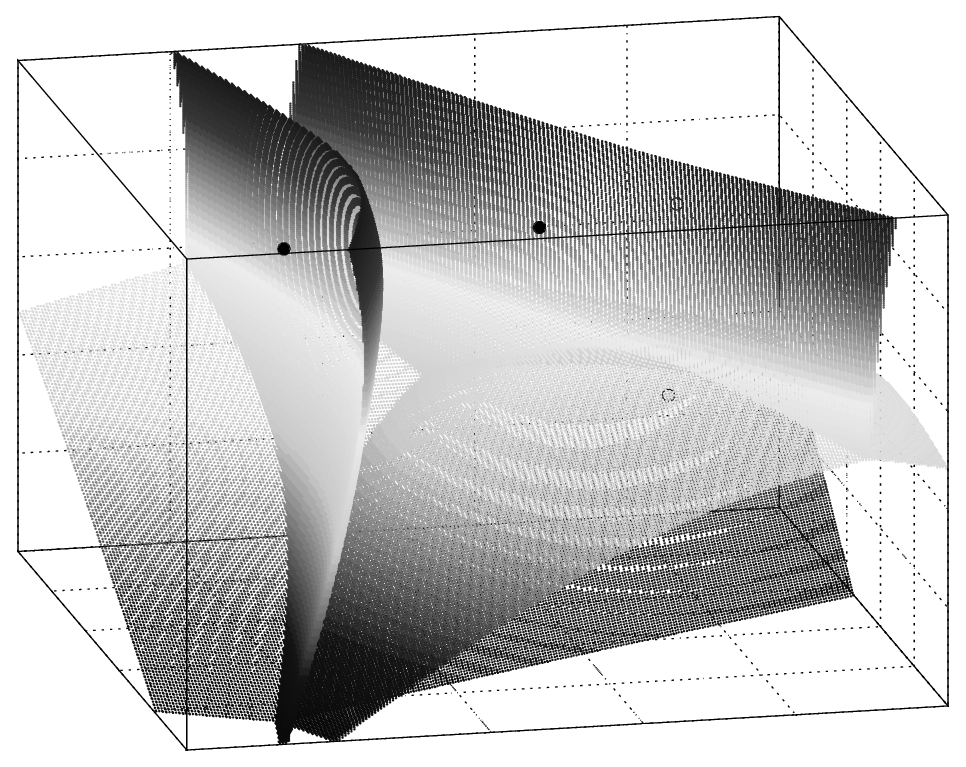}
    \end{overpic}}
\caption{Three-dimensional semi-discrete solution with $n=5$}\label{f:3d}
\end{figure}

The image in \fref{f:3d} was generated by taking $c$ to be the $2$-norm, $\mu$ 
the uniform continuous probability density, 
and $\nu$ the uniform discrete probability density with five 
randomly-placed non-zero points in $[0,\,1]^3$.
Even in this relatively simple case, it is impossible to find a single 
point-of-view that clearly shows all five non-zero points while clearly 
illustrating the boundaries of the $\mu$-partitions.
However, even though clear illustration is problematic, the computations made 
with the boundary method were completely successful.

\subsection{Scaling behavior}\label{s:scaling}
One important advantage to the boundary method is its reduction of the 
complexity of the discretized problem, compared to traditional methods.
Before considering the numerical results, it is worth developing a generalized 
comparison that puts this reduction in perspective:

\noindent
Suppose for the sake of argument that a discretization with width $2^{-M}$ is 
required to solve a problem in $\R^2$ with $N$ positive points in $Y$.
Generating the full grid would create a product space $X \times Y$ of size 
$2^{2M} N$.
Say the boundary method is used instead, with a fixed initial discretization 
width of $2^{-4}$.
Each application of \aref{a:discard} of the boundary method algorithm
removes approximately half the points in $A^r$, so by discarding interiors the 
method constructs a product space of size $2^{M+4} N$.

Assume that we compute 
solutions for both the boundary method and the full product space using the 
same 
linear solver (e.g., the network simplex method).
Using it to solve the largest boundary problem of size 
$2^{M+4} N$, we have $V = 2^{M+4} + N$ vertices and $E = 2^{M+4} N$ 
edges.
Solving over the full product space gives 
$V=2^{2M} + N$ vertices and $E = 2^{2M} N$ edges.
Hence, even if we assumed a solver with complexity $\mathcal{O}(V)$ (and no 
such 
solver exists), the ratio would be approximately $2^M$ to $M$. Typically, it 
is closer to $2^{2M}$ to $M$.

Of course, this improved complexity would be irrelevant if the constant factor 
was excessively large.
Fortunately, that this is not the case, as our next section illustrates.

Since we focus here on the semi-discrete problem, for purposes of evaluating
complexity, we assume $X$ is discretized into $W^d$ elements, and $Y$ into $N$ 
elements.
Given $W$ sufficiently large, the resulting network has
$W^d+N\sim \mathcal{O}(W^d)$ nodes and $W^dN$ arcs.

\subsubsection{Scaling on the plane with respect to $W = 1 /
w_*$}\label{s:Wscaling}
Here we consider scaling on the 
plane with respect to $ W = \frac{1}{w_*}$.
We used Example \ref{x:steps}, with $\mu$ and $\nu$ uniform and $c$ the 
$2$-norm. 
The locations of the
$5$ points where $\nu = 1/5$ were fixed as depicted in 
\fsubref{f:meas_comp}{f:uni}.
We defined target widths $w_* = 2^{-m}$, $m \in \N$, and computed the
time taken by the boundary method.
By repeating this process for a few different location sets (and averaging 
them), we estimated the average scaling behavior of the boundary method with 
respect to $W$.
The test results are shown in Table~\ref{tb:IndScaling}\subref{tb:Wscaling}, and
the scaling equations are on the left side of Table~\ref{tb:IndScaleEQ}.

\begin{table}[htpb]
\centering
\caption{Planar scaling with respect to $W$ and $N$}\label{tb:IndScaling}%
\subfloat[Scaling with respect to $W$]{%
\label{tb:Wscaling}
\centering
\begin{tabular}[b]{| c | r c r |}
\hline
& \multicolumn{3}{c|}{$N=5$}  
\\\hline
$W$ & \multicolumn{2}{c}{T (sec)} & \multicolumn{1}{c|}{S (MB)}
\\\hline\hline
$2^{12}$ & $0.855$ & & $24.540$
\\ 
$2^{13}$ & $2.005$ & & $49.100$
\\
$2^{14}$ & $4.497$ & & $98.210$
\\
$2^{15}$ & $11.025$ & & $196.400$
\\
$2^{16}$ & $28.093$ & & $394.400$
\\
$2^{17}$ & $60.577$ & & $785.800$
\\
$2^{18}$ & $132.397$ & & $1571.840$
\\
$2^{19}$ & $292.158$ & & $3151.872$
\\
$2^{20}$ & $640.660$ & & $6309.888$
\\\hline
\end{tabular}
}%
\subfloat[Scaling with respect to $N$]{%
\label{tb:Nscaling}
\centering
\begin{tabular}[b]{| c | c c | c c |}
\hline
& \multicolumn{2}{c |}{$W=2^{10}$} & \multicolumn{2}{c |}{$W=2^{11}$}
\\\hline
$N$ & T (sec) & S (MB)
& T (sec) & S (MB)
\\\hline\hline
128 & \phantom{1}6.938 & 17.25 & 22.365 & 33.91 \\
136 & 12.190 & 18.24 & 36.601 & 35.05 \\
144 & 10.982 & 17.99 & 29.952 & 36.49 \\
152 & 13.139 & 18.54 & 36.703 & 41.27 \\
160 & 11.420 & 18.66 & 34.801 & 40.27 \\
168 & 15.727 & 20.97 & 44.959 & 40.66 \\
176 & 15.332 & 21.38 & 44.873 & 43.06 \\
184 & 18.243 & 21.38 & 53.689 & 43.20 \\
192 & 12.796 & 21.60 & 40.029 & 43.66
\\\hline
\end{tabular}
}%
\end{table}

\begin{table}[htpb]
\centering
\caption{Time and storage scaling with respect to $W$ and 
$N$ separately}\label{tb:IndScaleEQ}%
\renewcommand{\arraystretch}{1.5}
\begin{tabular}{| l || c || l |}
\hline
$T(W) \approx 4.356\times10^{-5}W\ln W$
& Time
& $T(N) \approx 4.582\times10^{-2}N\ln N$
\\\hline
$S(W) \approx 6.015\times10^{-3}W$
& Storage
& $S(N) \approx 3.162\, N^{1/2}$
\\\hline
\end{tabular}
\end{table}

\subsubsection{Scaling on the plane with respect to $N$}\label{s:Nscaling}
To evaluate planar scaling with respect to $N$, we performed multiple runs in 
$[0,\,1]^2$ where $W = 2^{11}$ was fixed and $\mu$ and $\nu$ were uniform.
The $N = n$ points where $\nu = 1/n$ were placed at random locations in $A$. 
Because the resulting time data was highly dependent on point placement, it was 
extremely noisy.
Thus, we did ten runs for each $N$ and took the median.
We started with $N=128$, increasing by eights up to to $N=192$, for a 
total of 100 tests.
The results are shown in the right-hand columns of 
Table~\ref{tb:IndScaling}\subref{tb:Nscaling}.
See the right side of Table~\ref{tb:IndScaleEQ}
for scaling equations with respect to $N$.

\subsubsection{Scaling interaction of $W$ and 
$N$ on the plane}\label{s:bigN}
Increasing $N$ means one must consider the scaling behavior of the boundary 
method with respect to $N$, as described in \sref{s:Nscaling}, above.
However, there is another relevant limiting factor for large $N$: the 
decreasing area size $\mu(A_i) = n^{-1}$ runs up against the accuracy of the 
reconstruction. For the problem shown in \fref{f:bigN}, the area of each region 
is $5.0 \times 10^{-3}$.
When $w_* = 2^{-11}$, the maximum error for the 
area of the partition regions is $8.34 \times 10^{-4}$.
This is $16.7\%$, about one-sixth of the size of each 
region.

\begin{figure}[htpb]
  \centering
  \subfloat[$N=200$ points in $\R^2$]{%
    \label{f:Npoints}
    \centering
    \resizebox{0.3\textwidth}{!}{%
    \begin{overpic}[width=\textwidth]{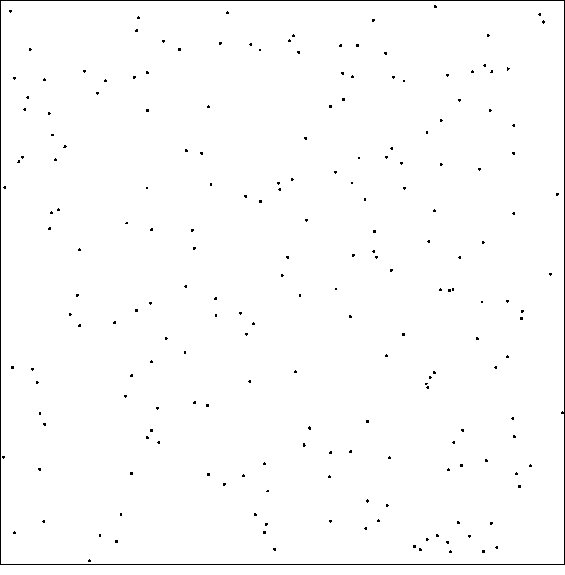}
    \end{overpic}}
  }%
  \quad\quad%
  \subfloat[$\mu$-Partition]{%
    \label{f:Npartitions}
    \centering
    \resizebox{0.3\textwidth}{!}{%
    \begin{overpic}[width=\textwidth]{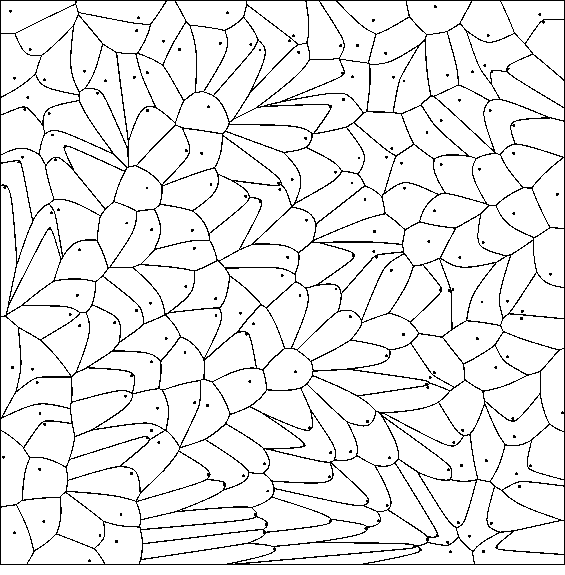}
    \end{overpic}}
  }%
\caption{Partitioning with large $N$}\label{f:bigN}
\end{figure}

If all we desire is the \Was{} distance or the boundary set, this error need not 
be a concern.
However, if we want an accurate set of shifts, 
large $N$ requires that we increase $W$ to match.
Hence, we wanted to consider what happens as $W$ and $N$ increase in 
tandem.

As it turns out, the scaling we observe is consistent with the product 
of the two scaling behaviors already determined: $\mathcal{O}(WN\log W\log N)$ 
with respect to time, and $\mathcal{O}(WN^{1/2})$ with respect to storage.
See Table~\ref{tb:WNscaleEQ} for approximate equations.

\begin{table}[htpb]
\centering
\caption{Time and memory scaling with respect to both $W$ and 
$N$}\label{tb:WNscaleEQ}%
\renewcommand{\arraystretch}{1.5}
\begin{tabular}{| c || l |}
\hline
Time
& $T(N,\,W) \approx 2.853\times10^{-6}WN\ln W \ln N$
\\\hline
Storage
& $S(N,\,W) \approx 1.538\times10^{-3}\, WN^{1/2}$
\\\hline
\end{tabular}
\end{table}

\subsubsection{Scaling in three dimensions and extrapolation to 
$\R^d$}\label{s:3dscale}
The computations described above can be repeated in three dimensions.
We scale $W$ separately by taking a projection of Example \ref{x:steps} into 
the center of the cube $[0,\,1]^3$. Then we consider the median of tests 
when $W = 2^{7}$ and $N$ ranges from 8 to 80. Finally, we scale $W$ and $N$ 
together, and consider their combined behavior. 
Approximate scaling equations are given in Table~\ref{tb:WN3DscaleEQ}.

\begin{table}[htpb]
\centering
\caption{3-D scaling with respect to $W$ and 
$N$}\label{tb:WN3DscaleEQ}%
\renewcommand{\arraystretch}{1.5}
\begin{tabular}{| c | c || l | }
\hline
\multirow{2}{*}{$W$ alone}
& Time
& $T(W) \approx 6.878\times10^{-5}W^2\ln W$
\\
& Storage
& $S(W) \approx 2.341\times10^{-2}\, W^2$
\\\hline\hline
\multirow{2}{*}{$N$ alone}
& Time
& $T(N) \approx 2.849\times10^{-1}N \ln N$
\\
& Storage
& $S(N) \approx 2.315\times10^{2}\, N^{1/3}$
\\\hline\hline
\multirow{2}{*}{$W$ and $N$}
& Time
& $T(N,\,W) \approx 3.531\times10^{-6}W^2N\ln W \ln N$
\\
& Storage
& $S(N,\,W) \approx 1.397\times10^{-1}\, W^2N^{1/3}$
\\\hline
\end{tabular}
\end{table}

Taking the combined scaling equations for two and three dimensions, and 
extrapolating to arbitrary dimension $d \geq 2$, we anticipate scaling 
of
\begin{equation*}
T(d,\,N,\,W) \sim \mathcal{O}(W^{d-1}N\log W\log N)
\quad\text{and}\quad
S(d,\,N,\,W) \sim \mathcal{O}(W^{d-1}N^{1/d}).
\end{equation*}

\section{Conclusions and future work}\label{s:develop}
In this work, we presented the boundary method, a new technique for 
approximating solutions to semi-discrete optimal transportation problems.  We gave an algorithmic description and mathematical justification.
As we showed, by tackling only the boundary of the regions to be 
transported, the method has very favorable scaling properties.
Under the assumption that all computations are exact, we gave sharp 
convergence results for $p$-norms with $p \in (1,\,\infty)$, and we
presented numerical
examples supporting those convergence results. 
We showed that the boundary method can provide accurate approximations of
the partition regions and \Was{} distance for a multitude of cost
functions, including some that are not covered by our theorems:
the $1$-norm, the $\infty$-norm, strictly convex non-norms such as the
squared $2$-norm, concave non-norms such as $p$-functions with
$p \in (0,\,1)$,
and polynomial combinations of $p$-functions that are
neither concave nor convex.
As we also showed, even when partitioning fails, the boundary method
can solve with accuracy and convergence comparable to the case where
a partition exists.
Our future work will consider applications of the boundary method to fully
continuous mass transportation problems and the impact of
estimated computations on convergence.

\bibliographystyle{elsarticle-num}
\bibliography{ref}
\end{document}